\documentclass[12pt]{article}

\usepackage{amsmath, amsthm, amssymb, stmaryrd, enumerate}
\usepackage[all]{xy}
\usepackage{fullpage}
\usepackage{titlesec}
\usepackage{mathrsfs}
\usepackage{graphics}

\titleformat{\section}{\normalsize\bfseries}{\thesection}{1em}{}
\titleformat{\subsection}{\normalsize\bfseries}{\thesubsection}{1em}{}

\numberwithin{equation}{subsection}

\theoremstyle{plain}

\newtheorem{proposition}[subsection]{Proposition}
\newtheorem{lemma}[subsection]{Lemma}
\newtheorem{corollary}[subsection]{Corollary}
\newtheorem{theorem}[subsection]{Theorem}

\newtheorem{proposition_sub}[subsubsection]{Proposition}
\newtheorem{lemma_sub}[subsubsection]{Lemma}

\newtheorem{theorem_sub}[subsubsection]{Theorem}

\theoremstyle{definition}

\newtheorem{definition}[subsection]{Definition}
\newtheorem{example}[subsection]{Example}
\newtheorem{remark}[subsection]{Remark}
\newtheorem{generic}[subsection]{}

\newtheorem{definition_sub}[subsubsection]{Definition}

\newtheorem{example_sub}[subsubsection]{Example}

\newcommand*{\emptybox}{\leavevmode\hbox{}}

\newcommand{\RR}{\mathbb{R}}
\newcommand{\NN}{\mathbb{N}}
\newcommand{\cInf}{C^{\infty}}
\newcommand{\cInfRn}{\cInf(\RR^n)}
\newcommand{\cInfRing}{\cInf\mbox{-ring}}
\newcommand{\Smooth}{\operatorname{\textnormal{\text{Smooth}}}}
\newcommand{\Poly}{\operatorname{\textnormal{\text{Poly}}}}
\newcommand{\Lin}{\operatorname{\textnormal{\text{Lin}}}}
\newcommand{\Set}{\operatorname{\textnormal{\text{Set}}}}
\newcommand{\Fin}{\operatorname{\textnormal{\text{Fin}}}}
\newcommand{\CinftyRing}{\ensuremath{C^\infty\kern -.5ex\operatorname{\textnormal{-\text{Ring}}}}}
\newcommand{\CMon}{\operatorname{\textnormal{CMon}}}
\newcommand{\C}{\mathcal{C}}
\newcommand{\D}{\mathcal{D}}
\newcommand{\E}{\mathcal{E}}
\newcommand{\rVec}{\operatorname{\textnormal{$\mathbb{R}$-Vec}}}
\newcommand{\SSS}{\mathsf{S}}
\newcommand{\T}{\mathsf{T}}
\newcommand{\m}{\mathsf{m}}
\newcommand{\dt}{\mathsf{d}}
\newcommand{\iT}{\mathsf{s}}
\newcommand{\algMod}{(S, \mu, \eta, \m, \mathsf{u})}
\newcommand{\Sym}{\mathsf{Sym}}
\newcommand{\di}{\partial}
\newcommand{\Lan}{\ensuremath{\operatorname{\textnormal{\textsf{Lan}}}}}
\newcommand{\f}{\textnormal{f}}
\newcommand{\cT}{\mathcal{T}}
\newcommand{\ca}[1]{\scalebox{0.85}{\raisebox{0.3ex}{$|$}} #1\scalebox{0.85}{\raisebox{0.3ex}{$|$}}}
\newcommand{\Alg}[1]{\ensuremath{#1\operatorname{\textnormal{-\text{Alg}}}}}

\begin{document}
\allowdisplaybreaks

\author{G. S. H. Cruttwell,\hspace{0.75ex} J.-S. P. Lemay,\hspace{0.75ex} R. B. B. Lucyshyn-Wright\let\thefootnote\relax\footnote{Keywords: differential categories; C-infinity rings; Rota-Baxter algebras; monads; algebra modalities; monoidal categories; derivations; K\"ahler differentials. }\footnote{2010 Mathematics Subject Classification: 18D10; 18D99; 18C15; 26B12; 13N15; 13N05; 26B20; 03F52. }
}

\title{Integral and differential structure \\ on the free $C^\infty$-ring modality}

\date{}

\maketitle

\abstract{Integral categories were recently developed as a counterpart to differential categories.  In particular, integral categories come equipped with an integration operator, known as an integral transformation, whose axioms generalize the basic integration identities from calculus such as integration by parts.  However, the literature on integral categories contains no example that captures integration of arbitrary smooth functions: the closest are examples involving integration of polynomial functions.  This paper fills in this gap by developing an example of an integral category whose integral transformation operates on smooth 1-forms.   We also provide an alternative viewpoint on the differential structure of this key example, investigate derivations and coderelictions in this context, and prove that free $C^{\infty}$-rings are Rota-Baxter algebras.}

%\abstract{This paper develops an example of an integral category whose integral transformation operates on smooth 1-forms. Further, we revisit the differential structure of this category, and we investigate derivations, coderelictions, and Rota-Baxter algebras in this context.}

\section{Introduction} \label{sec:intro}
One of the most important examples of a differential category \cite{differentialCats} captures differentiation of smooth functions by means of (co)differential structure on the free $C^\infty$-ring monad on $\RR$-vector spaces; this example was given in \cite[\S 3]{differentialCats} as an instance of a more general construction (called the $S^\infty$ construction).  It is important for at least three reasons: firstly, it is a differential category based directly on ordinary differential calculus.  Secondly, through an analogy with the role of commutative rings in algebraic geometry, $C^\infty$-rings play an important role in the semantics of \textit{synthetic differential geometry} \cite{Kock,MoeRey} and so provide a key benchmark for the generalization of aspects of commutative algebra in differential categories, including the generalizations of derivations and K\"ahler differentials in \cite{blo}.  Thirdly, the free $C^\infty$-ring monad provides a key example of a differential category that does not possess the Seely (also known as storage) isomorphisms \cite{cartDiffStorage}. Differential category structure can be simplified if one assumes the Seely isomorphisms (for more on this, see \cite{blute2018differential}); this key example shows why it is important to not assume them in general.

A recent addition to the study of categorical calculus is the story of integration and the fundamental theorems of calculus with the discovery of \emph{integral} and \emph{calculus} categories \cite{integralAndCalcCats} and differential categories with \emph{antiderivatives} \cite{integralAndCalcCats, ehrhardIntegral}. These discoveries show that both halves of calculus can be developed at this abstract categorical level. The first notion of integration in a differential category was introduced by Ehrhard in \cite{ehrhardIntegral} with the introduction of differential categories with antiderivatives, where one builds an integral structure from the differential structure. Integral categories and calculus categories were then introduced in the second author's masters thesis \cite{lemay2017integral}, under the supervision of Bauer and Cockett. Integral categories have an axiomatization of integration that is independent from differentiation, while the axioms of calculus categories describe compatibility relations between a differential structure and an integral structure via the two fundamental theorems of calculus. In particular, every differential category with antiderivatives is a calculus category. Cockett and the second author also published an extended abstract \cite{cockett_et_al:LIPIcs:2017:7668} and then a journal paper \cite{integralAndCalcCats} which provided the full story of integral categories, calculus categories, and differential categories with antiderivatives. 

However, a key example was missing in those papers: an integral category structure on the free $C^\infty$-ring monad that would be compatible with the known differential structure.  Such an example is important for the same reasons as above: it would give an integral category that resembles  ordinary calculus, and it would show that it is useful to avoid assuming the Seely isomorphisms for integral categories (noting that, as with differential categories, the assumption of the Seely isomorphisms can simplify some of the structure: for example, see \cite[Theorem 3.8]{convenientAntiDerivatives}). The journal paper on integral categories \cite{integralAndCalcCats} presented an integral category of polynomial functions, but it was not at all clear from its definition (and not known) that the formula for its deriving transformation could be generalized to yield an integral category of arbitrary smooth functions.  

Developing such an example (namely an integral category structure for the free $C^\infty$-ring monad) is the central goal of this paper.  As noted above, it is a key example whose existence demonstrates the relevance and importance of the definition of integral categories.  Moreover, in considering the integral side of this example, we have also found additional results and ideas for the differential side.  

In particular, in order to define the integral structure for this example, we have found it helpful to give an alternative presentation of its differential structure and its monad $\SSS^\infty$ on the category of vector spaces over $\RR$.  The original paper on differential categories \cite{differentialCats} did not mention the fact that $\SSS^\infty$ is the free $C^\infty$-ring monad, nor that it is a finitary monad, although it did construct this monad as an instance of a more general construction applicable for certain Lawvere theories carrying differential structure.  However, that paper \cite{differentialCats} did not define $\SSS^\infty$ by means of the usual recipe through which a finitary monad is obtained from its corresponding Lawvere theory; instead, the endofunctor $S^\infty$ was defined in \cite[\S 3]{differentialCats} by associating to each real vector space $V$ a set $S^\infty(V)$ consisting of certain mappings $h:V^* \rightarrow \RR$ on the algebraic dual $V^*$ of $V$.  

To facilitate our work with this example, we have found it helpful to give an alternative approach, via the theory of finitary monads.  Since the monad $\SSS^\infty$ is finitary, we are able to exploit standard results on locally finitely presentable categories and finitary monads to show that the differential structure carried by $\SSS^\infty$ arises by left Kan extension from structure present on the finite-dimensional real vector spaces.  Aside from shedding some new light on this important example, this approach enables us to define an integral structure on $\SSS^\infty$ through a similar method of left Kan extension, starting with integration formulae for finite-dimensional spaces.

In addition to providing a key new example of an integral category, this paper also has some further interesting aspects.  The first is in its investigation of derivations in this context. A recent paper by Blute, Lucyshyn-Wright, and O'Neill \cite{blo} defined derivations for (co)differential categories.  Here we show that derivations in this general sense, when applied to the $C^\infty$-ring example that we consider here, correspond precisely to derivations of the Fermat theory of smooth functions as defined by Dubuc and Kock \cite{dubucKock1Form}.  This provides additional evidence that the Blute/Lucyshyn-Wright/O'Neill definition is the appropriate generalization of derivations in the context of codifferential categories.  We also show that while this key example does not possess a \textit{codereliction} (see \cite{blute2018differential, differentialCats}), it does possess structure sharing many of the key features of a codereliction.  

Finally, we conclude with an interesting result on Rota-Baxter algebras.  By definition, an integral category satisfies a certain Rota-Baxter axiom.  By showing that the smooth algebra example is an integral category, we get as a corollary that free $\cInfRing$s are Rota-Baxter algebras (Proposition \ref{prop:rotaBaxterAlgebras}), a result that appears to be new.  

The paper is organized as follows.  In Section \ref{sec:background}, we review differential and integral categories, working through the definitions using the standard polynomial example.  In Section \ref{sec:fin_alg} we review and discuss some aspects of finitary monads that will be useful in relation to our central `smooth' example, including some results that are known among practitioners but whose statements we have not found to appear in the literature.  In Section \ref{sec:cinfty_modality}, we review generalities on $C^\infty$-rings, and we define the $C^\infty$-ring monad (and algebra modality) on real vector spaces.  In Section \ref{sec:diff} we define the differential structure of this example, as well as consider derivations and (co)derelictions in this context.  Finally, in Section \ref{sec:anti}, we establish the integral structure of the central example, and we conclude by proving that free $C^\infty$-rings have Rota-Baxter algebra structure.

\section{Background on differential and integral categories} \label{sec:background}

This section reviews the central structures of the paper: (co)differential categories, \mbox{(co-)integral} categories, and (co-)calculus categories \cite{differentialCats, integralAndCalcCats}.  Throughout this section, we will highlight the particular example of the category of $\mathbb{R}$-vector spaces with polynomial differentiation and integration \cite[Proposition 2.9]{differentialCats}.  While much of this material is standard, we have included it here to set a consistent notation and to clarify precisely which definitions we are using (for example, the definition of (co)differential category changed from \cite{differentialCats} to \cite{cartDiffStorage}).  

\begin{definition}
An \textbf{additive\footnote{Here we use the term \textit{additive category} to refer to any category enriched in commutative monoids, while this term is more often used for categories that are enriched in abelian groups and have finite biproducts.} symmetric monoidal category} consists of a symmetric monoidal category $(\C, \otimes, k, \sigma)$ such that $\C$ is enriched over commutative monoids and $\otimes$ preserves the commutative monoid structure in each variable separately. 
\end{definition}

\begin{example}
The category of vector spaces over $\mathbb{R}$ and $\mathbb{R}$-linear maps between them, $\rVec$, is an additive symmetric monoidal category with the structure given by the standard tensor product and the standard additive enrichment of vector spaces.
\end{example}

\begin{definition}\label{def:alg_mod}
If $(\C, \otimes, k, \sigma)$ is a symmetric monoidal category, an \textbf{algebra modality} $(\SSS,\mathsf{m},\mathsf{u})$ on $\C$ consists of:
\begin{itemize}
	\item a monad $\mathsf{S} = (S, \mu, \eta)$ on $\C$;
	\item a natural transformation $\mathsf{m}$, with components $\mathsf{m}_C: SC \otimes SC \to SC$ $\:(C \in \C)$;
	\item a natural transformation $\mathsf{u}$, with components $\mathsf{u}_C: k \to SC$ $\:(C \in \C)$;
\end{itemize}
such that
\begin{itemize}
	\item for each object $C$ of $\C$, $(SC,\mathsf{m}_C, \mathsf{u}_C)$ is a commutative monoid (in the symmetric monoidal category $\C$);
	\item each component of $\mu$ is a monoid morphism (with respect to the obvious monoid structures).  
\end{itemize}
Such an algebra modality $(\SSS,\mathsf{m},\mathsf{u})$ will also be denoted by $\algMod$ or by $\SSS$.
\end{definition}

\begin{example}\label{exa:sym_on_rvec} $\rVec$ has an algebra modality $\Sym$, which sends a vector space $V$ to the symmetric algebra on $V$ (over $\RR$),
	\[ \Sym(V) := \bigoplus_{n=0}^{\infty} \Sym^{n}(V) \]
where $\Sym^{0}(V) := \mathbb{R}, \Sym^{1}(V) := V$, and for $n \geq 2$, $\Sym^{n}(V)$ is the quotient of the tensor product of $V$ with itself $n$ times by the equations
	\[ v_1 \otimes \ldots \otimes v_i \otimes \ldots v_n = v_{\sigma(1)} \otimes \ldots \otimes v_{\sigma(i)} \otimes \ldots \otimes v_{\sigma(n)} \]
associated to permutations $\sigma$ of $\{1, 2, \ldots n\}$.  It is a standard result that $\Sym(V)$ can also be identified with a polynomial ring: if $X = \{x_i \mid i \in I\}$ is a basis for $V$, then
	\[ \Sym(V) \cong \mathbb{R}[X]. \]
Also, $\Sym(V)$ is the free commutative $\RR$-algebra on the $\RR$-vector space $V$.
\end{example}

\begin{definition}\label{defn:codiff}
If $(\C, \otimes, k, \sigma)$ is an additive symmetric monoidal category with an algebra modality $\algMod$, then a \textbf{deriving transformation} on $\C$ is a natural transformation $\dt$, with components
    \[ \dt_C: SC \to SC \otimes C \;\;\;\;\;\;(C \in \C)\]
such that\footnote{Note that here, and throughout, we denote diagrammatic (left-to-right) composition by juxtaposition, whereas we denote right-to-left, non-diagrammatic composition by $\circ$, and functions $f$ are applied on the left, parenthesized as in $f(x)$; however, we write composition of functors in the right-to-left, non-diagrammatic order, and functors $F$ are applied on the left, as in $FX$.  We suppress the use of the monoidal category associator and unitor isomorphisms, and we omit subscripts and whiskering on the right.} 
\begin{enumerate}[{\bf [d.1]}]
    \item \textbf{Derivative of a constant}: $\mathsf{u}\dt = 0$;
    \item \textbf{Leibniz/product rule}: $\m \dt = [(1 \otimes \dt)(\m \otimes 1)] + [(\dt \otimes 1)(1 \otimes \sigma)(\m \otimes 1)]$;
    \item \textbf{Derivative of a linear function}: $\eta \dt = \mathsf{u} \otimes 1$;
    \item \textbf{Chain rule}: $\mu \dt = \dt (\mu \otimes \dt)(\m \otimes 1)$;
    \item \textbf{Interchange}\footnote{This rule was not in the original paper \cite{differentialCats}, but was later formally introduced in \cite{cartDiffStorage}, and is used in \cite{integralAndCalcCats}.  It represents the independence of order of partial differentiation.}: $\dt(\dt \otimes 1) = \dt (\dt \otimes 1)(1 \otimes \sigma)$.
\end{enumerate}
Such a $\C$ equipped with a deriving transformation $\dt$ is called a \textbf{codifferential category}.  
\end{definition}

\begin{example}\label{ex:rVecDiff}
$\rVec$ is a codifferential category with respect to the deriving transformation $\dt_V: \Sym(V) \to \Sym(V) \otimes V$ defined on pure tensors by
    \[ \dt_V(a_1 \otimes \ldots \otimes a_n) := \sum_{i=1}^n (a_1 \otimes \ldots \otimes a_{i-1} \otimes a_{i+1} \otimes \ldots \otimes a_n) \otimes a_i \]
where $a_1,...,a_n \in V$.  If $V$ has a basis $X$, then with respect to the isomorphism $\Sym(V) \cong \mathbb{R}[X]$, $\dt_V: \mathbb{R}[X] \to \mathbb{R}[X] \otimes V$ is given by taking a sum involving the partial derivatives:
    \[ \dt_V(x_1^{n_1} \ldots x_k^{n_k}) = \sum_{i=1}^k n_i \cdot x_1^{n_1} \ldots x_i^{n_i-1} \ldots x_k^{n_k} \otimes x_i. \]
\end{example}

We now turn to the integral side of this theory, as described in \cite{integralAndCalcCats}.  

\begin{definition}\label{defn:coint}
If $(\C, \otimes, k, \sigma)$ is an additive symmetric monoidal category with an algebra modality $\algMod$, then an \textbf{integral transformation} on $\C$ is a natural transformation $\iT$, with components
    \[ \iT_C: SC \otimes C \to SC \;\;\;\;\;\;(C \in \C)\]
such that
\begin{enumerate}[{\bf [s.1]}]
\item \textbf{Integral of a constant}: $(\mathsf{u} \otimes 1)\iT=\eta$;
\item \textbf{Rota-Baxter rule}: $(\iT \otimes \iT)\m  = [(\iT \otimes 1 \otimes 1)(\m \otimes 1)\iT]+[(1 \otimes 1 \otimes \iT)(1 \otimes \sigma)(\m \otimes 1)\mathsf{s}$];
\item \textbf{Interchange}: $(\iT \otimes 1)\iT =(1 \otimes \sigma)(\iT \otimes 1)\iT$.
\end{enumerate}
Such a $\C$ equipped with an integral transformation $\iT$ is called a \textbf{co-integral category}. 
\end{definition}

We refer the reader to \cite{integralAndCalcCats} for more intuition on these axioms, particularly if the reader is interested in the Rota-Baxter rule: roughly, it is ``integration by parts'', but expressed solely in terms of integrals.  We discuss the Rota-Baxter rule more in Section \ref{sec:anti} when discussing Rota-Baxter algebras.  

\begin{example}\label{ex:rVecInt}
$\rVec$ is a co-integral category, with integral transformation $\iT_V: \Sym(V) \otimes V \to \Sym(V)$ defined on generators by
    \[ \iT_V((v_1 \otimes \ldots \otimes v_n) \otimes w) := \frac{1}{n+1} v_1 \otimes \ldots \otimes v_n \otimes w\;\;\;\;\;\;(v_1,...,v_n,w \in V). \]
If $V$ has a basis $X$, then with respect to the isomorphism $\Sym(V) \cong \mathbb{R}[X]$, $\iT_V: \mathbb{R}[X] \otimes V \to \mathbb{R}[X]$ is given by
    \[ \iT_V((x_1^{n_1} \ldots x_k^{n_k}) \otimes x_i) = \frac{1}{1 + \sum_{i=1}^k n_k} x_1^{n_1} \ldots x_i^{n_i + 1} \ldots x^{n_k}. \]
\end{example}
Note that the form the integral transformation takes in this example is perhaps slightly unexpected: the denominator sums \emph{all} of the exponents in the monomial, not just the exponent on the indeterminate with respect to which integration occurs.  As noted in \cite{integralAndCalcCats}, ``at first glance this may seem bizarre ... however, [simply taking $n_i + 1$] fails the Rota-Baxter rule for any vector space of dimension greater than one''.  We shall see in this paper, however, a more abstract reason why this is the right integral transformation on polynomials: it can be recovered from the integral transformation for smooth functions, by restricting to polynomials (see Remark \ref{rmk:integralOfPolys}).  Thinking about it another way, it is not at all clear how to extend the above formula to an arbitrary smooth function; one of the accomplishments of the present work is in finding this extension.  

We now consider categories with differential and integral structure that are compatible (in the sense of the fundamental theorems of calculus).  

\begin{definition}\label{cocaldef} A \textbf{co-calculus category} \cite{integralAndCalcCats} is a codifferential category and a co-integral category on the same algebra modality such that the deriving transformation $\dt$ and the integral transformation $\iT$ satisfy the following: 
\begin{enumerate}[{\bf [c.1]}]
\item The \textbf{Second Fundamental Theorem of Calculus}:  $\dt\iT + S(0)=1$;
\item The \textbf{Poincar\' e condition}: if $f: B \to S(C) \otimes C$ is such that
\[f(\dt \otimes 1)(1 \otimes \sigma)=f(\dt \otimes 1)\]
then $f$ \textbf{satisfies the First Fundamental Theorem}; that is, $f \iT \dt=f$.  
\end{enumerate}
\end{definition}

\begin{remark} The axioms of a calculus category were first described by Ehrhard in \cite{ehrhardIntegral} as consequences of his notion of a differential category with antiderivatives. 
\end{remark}

\begin{example}
$\rVec$, with the `polynomial' codifferential and co-integral structure carried by the symmetric algebra monad $\Sym$ (\ref{ex:rVecDiff}, \ref{ex:rVecInt}), is a co-calculus category.  
\end{example}

In fact, $\rVec$ is even stronger: it is a (co)differential category with antiderivatives.  Before defining this notion, we first need to recall certain natural transformations associated with algebra modalities and deriving transformations.  

\begin{definition}\label{def:coderiving} The \textbf{coderiving transformation} \cite{integralAndCalcCats} for an algebra modality $\algMod$ is the natural transformation $\dt^\circ_A: SA \otimes A \to SA$ defined as follows: 
  \[\dt^\circ :=  (1 \otimes \eta)\m\]
 \end{definition}
 
As discussed in \cite{integralAndCalcCats}, while the coderiving transformation is of the same type as an integral transformation, in most cases it is \textbf{NOT} an integral transformation. However, it is used in the construction of the integral transformation for a differential category with antiderivatives.  
 
 \begin{definition}\label{def:K} For a codifferential category with algebra modality $(\mathsf{S}, \mu, \eta, \mathsf{m}, \mathsf{u})$ and deriving transformation $\mathsf{d}$, define the following natural transformations  \cite{integralAndCalcCats}, all of type $S \Rightarrow S$:
\begin{enumerate}[{\em (i)}]
\item $\mathsf{L} :=  \dt\dt^{\circ}$
\item $\mathsf{K}:= \mathsf{L} + S(0)$
\item $\mathsf{J} := \mathsf{L} + 1$. 
\end{enumerate} 
A codifferential category is said to have \textbf{antiderivatives} if $\mathsf{K}$ is a natural isomorphism.
\end{definition}

In \cite{ehrhardIntegral} Ehrhard uses a slightly different definition of having antiderivatives, instead of asking that $\mathsf{J}$ be invertible. However, as shown in \cite[Proposition 6.1]{integralAndCalcCats}, the invertibility of $\mathsf{K}$ implies that of $\mathsf{J}$.  Moreover, if $K$ or $J$ is invertible, then one can construct a co-integral category with integral transformation constructed using either  $\mathsf{K}^{-1}$ or $\mathsf{J}^{-1}$, and the two constructions give the same result when both are invertible. The reason to use $\mathsf{K}$ over $\mathsf{J}$ is that $\mathsf{K}$ being invertible immediately implies one has a co-calculus category. On the other hand, while $\mathsf{J}$ being invertible gives a co-integral category, one needs an added condition (known as the Taylor Property \cite[Definition 5.3]{integralAndCalcCats}) to also obtain a co-calculus category. 

\begin{theorem}\label{antithm} \textnormal{\cite{integralAndCalcCats}} A codifferential category with antiderivatives is a co-calculus category whose integral transformation is defined by $\iT := \dt^\circ \mathsf{K}^{-1}= (\mathsf{J}^{-1} \otimes 1)\dt^\circ$.
\end{theorem}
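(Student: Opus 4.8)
The plan is to take $\iT := \dt^\circ \mathsf{K}^{-1}$ as the definition and then verify, in turn, that it agrees with $(\mathsf{J}^{-1}\otimes 1)\dt^\circ$, that it satisfies the integral axioms \textbf{[s.1]}--\textbf{[s.3]}, and that the pair $(\dt,\iT)$ satisfies the calculus axioms \textbf{[c.1]}--\textbf{[c.2]}. Everything rests on understanding how the ``degree'' operator $\mathsf{L}=\dt\dt^\circ$ (and hence $\mathsf{K}=\mathsf{L}+S(0)$, $\mathsf{J}=\mathsf{L}+1$) commutes past $\dt$, $\dt^\circ$ and $\m$, so I would isolate these commutation relations first and treat the axioms as consequences.

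The key preliminary step is the ``reverse Leibniz'' identity
\[ \dt^\circ\dt = 1 + (\dt\otimes 1)(1\otimes\sigma)(\dt^\circ\otimes 1), \]
obtained by expanding $\dt^\circ\dt=(1\otimes\eta)\m\dt$ via the product rule \textbf{[d.2]} and collapsing the first summand to the identity using the linear-derivative rule \textbf{[d.3]} and the unit law. Post-composing with $\dt$ and using the interchange rule \textbf{[d.5]} to delete the $(1\otimes\sigma)$ yields $\mathsf{L}\dt=\dt(\mathsf{L}\otimes 1)+\dt$; pre-composing with $\dt^\circ$ and using commutativity of $\m$ in the form $(\dt^\circ\otimes 1)\dt^\circ=(1\otimes\sigma)(\dt^\circ\otimes 1)\dt^\circ$ yields $\dt^\circ\mathsf{L}=(\mathsf{L}\otimes 1)\dt^\circ+\dt^\circ$. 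The elementary facts $S(0)\dt=0$, $\dt^\circ S(0)=0$, $\eta S(0)=0$ and $S(0)S(0)=S(0)$ (all immediate from naturality and the fact that $\otimes$ annihilates zero morphisms) upgrade these to the symmetric commutations
\[ \mathsf{K}\dt = \dt(\mathsf{J}\otimes 1), \qquad \dt^\circ\mathsf{K} = (\mathsf{J}\otimes 1)\dt^\circ. \]
Since $\mathsf{K}$ is invertible by hypothesis and $\mathsf{J}$ is therefore invertible by \cite[Proposition 6.1]{integralAndCalcCats}, these give $\mathsf{K}^{-1}\dt=\dt(\mathsf{J}^{-1}\otimes 1)$ and $\dt^\circ\mathsf{K}^{-1}=(\mathsf{J}^{-1}\otimes 1)\dt^\circ$, the latter being exactly the asserted equality of the two formulas for $\iT$.

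With these commutations in hand, three of the remaining goals are quick. For \textbf{[s.1]} one checks $(\mathsf{u}\otimes 1)\dt^\circ=\eta$ directly and $\eta\mathsf{K}=\eta$ (as $\eta\mathsf{L}=\eta$ by \textbf{[d.3]} and $\eta S(0)=0$), whence $(\mathsf{u}\otimes 1)\iT=\eta\mathsf{K}^{-1}=\eta$. For \textbf{[c.1]}, $\dt\iT=\mathsf{L}\mathsf{K}^{-1}=(\mathsf{K}-S(0))\mathsf{K}^{-1}=1-S(0)$, using $S(0)\mathsf{K}^{-1}=S(0)$, so $\dt\iT+S(0)=1$. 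For the Poincar\'e condition \textbf{[c.2]}, the reverse-Leibniz identity together with $\mathsf{K}^{-1}\dt=\dt(\mathsf{J}^{-1}\otimes 1)$ gives
\[ \iT\dt = \dt^\circ\dt(\mathsf{J}^{-1}\otimes 1) = (\mathsf{J}^{-1}\otimes 1) + (\dt\otimes 1)(1\otimes\sigma)(\dt^\circ\otimes 1)(\mathsf{J}^{-1}\otimes 1); \]
pre-composing with a form $f$ satisfying the symmetry hypothesis deletes the $(1\otimes\sigma)$, collapses the second summand via $\dt\dt^\circ=\mathsf{L}$ to $f(\mathsf{L}\mathsf{J}^{-1}\otimes 1)$, and leaves $f\iT\dt=f((\mathsf{J}^{-1}+\mathsf{L}\mathsf{J}^{-1})\otimes 1)=f(\mathsf{J}\mathsf{J}^{-1}\otimes 1)=f$.

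The substantive remaining work, and the step I expect to be the main obstacle, is the Rota--Baxter rule \textbf{[s.2]}; the interchange rule \textbf{[s.3]} is of the same flavour but milder. Both require one further commutation, namely that $\mathsf{L}$ is a $\m$-derivation,
\[ \m\mathsf{L} = (\mathsf{L}\otimes 1)\m + (1\otimes\mathsf{L})\m, \qquad\text{equivalently}\qquad (\mathsf{J}\otimes 1)\m + (1\otimes\mathsf{J})\m = \m(\mathsf{J}+1), \]
again extracted from the product rule \textbf{[d.2]}. Writing $\iT=(\mathsf{J}^{-1}\otimes\eta)\m$, each side of \textbf{[s.2]} reduces to a single product built from two applications of $\iT$, so that the factors $\mathsf{J}^{-1}$ on the two sides are evaluated at different degrees and the content of the axiom is that these reconcile; this is the abstract incarnation of the scalar identity $\tfrac{1}{m+n+2}\big(\tfrac{1}{m+1}+\tfrac{1}{n+1}\big)=\tfrac{1}{(m+1)(n+1)}$, and it is precisely the $\m$-derivation identity above that supplies the needed recombination. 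The rule \textbf{[s.3]} follows similarly from commutativity of $\m$ together with the commutation relations, used to move the two $\mathsf{J}^{-1}$ factors into a position symmetric under $(1\otimes\sigma)$. I would therefore expect essentially all of the effort to lie in the degree-shift bookkeeping for \textbf{[s.2]}, every other part being formal once the commutation lemmas are established.
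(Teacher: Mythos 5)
Your preliminary work is correct: the reverse-Leibniz identity, the commutations $\mathsf{K}\dt = \dt(\mathsf{J}\otimes 1)$ and $\dt^\circ\mathsf{K} = (\mathsf{J}\otimes 1)\dt^\circ$, the resulting equality $\dt^\circ\mathsf{K}^{-1} = (\mathsf{J}^{-1}\otimes 1)\dt^\circ$, and your verifications of \textbf{[s.1]}, \textbf{[c.1]} and \textbf{[c.2]} all check out. (Bear in mind that the paper itself does not prove this theorem at all; it quotes it from \cite{integralAndCalcCats}, so the relevant comparison is with the proof there, which is built on the same commutation lemmas you isolate.) One small slip: in \textbf{[c.1]} you write $\mathsf{L} = \mathsf{K}-S(0)$, but the category is only enriched in commutative monoids, so subtraction is unavailable; the fix is immediate, since $\dt\iT + S(0) = \mathsf{L}\mathsf{K}^{-1} + S(0)\mathsf{K}^{-1} = \mathsf{K}\mathsf{K}^{-1} = 1$ once you know $S(0)\mathsf{K} = S(0)$.

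The genuine gap is \textbf{[s.2]}: you do not prove the Rota--Baxter rule, you predict it (``I would expect essentially all of the effort to lie in the degree-shift bookkeeping''), and this axiom is the crux of the theorem --- it is what makes $\iT$ an integral transformation at all, and what the paper's Proposition \ref{prop:rotaBaxterAlgebras} ultimately rests on. Your instinct that the $\m$-derivation identity $\m\mathsf{L} = (\mathsf{L}\otimes 1)\m + (1\otimes\mathsf{L})\m$ is the engine is right, but that identity alone does not close the argument; what is missing is the assembly. Concretely: (i) post-compose \textbf{[s.2]} with the isomorphism $\mathsf{K}$, so the right-hand outer integrals become $\iT\mathsf{K} = \dt^\circ$ and the axiom becomes
\begin{equation*}
(\iT\otimes\iT)\m\mathsf{K} \;=\; (\iT\otimes 1\otimes 1)(\m\otimes 1)\dt^\circ \;+\; (1\otimes 1\otimes\iT)(1\otimes\sigma)(\m\otimes 1)\dt^\circ\;;
\end{equation*}
(ii) expand $\m\mathsf{K} = (\mathsf{L}\otimes 1)\m + (1\otimes\mathsf{L})\m + (S(0)\otimes S(0))\m$, the last summand by naturality of $\m$; (iii) use two identities your sketch never states: $\iT\mathsf{L} = \dt^\circ$ (immediate from your $\dt^\circ\mathsf{L} = (\mathsf{L}\otimes 1)\dt^\circ + \dt^\circ$, i.e.\ $\dt^\circ\mathsf{L} = (\mathsf{J}\otimes 1)\dt^\circ$, pre-composed with $\mathsf{J}^{-1}\otimes 1$) and $\iT S(0) = 0$ (from $\dt^\circ S(0) = 0$). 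These kill the $S(0)$-summand and turn the other two into $(\dt^\circ\otimes\iT)\m$ and $(\iT\otimes\dt^\circ)\m$; (iv) these match the two right-hand terms by associativity, commutativity and the unit law for $\m$. With (i)--(iv) the proof closes, and \textbf{[s.3]} reduces, after cancelling the iso $\mathsf{K}^{-1}$ on the right, to $(\dt^\circ\otimes 1)\dt^\circ = (1\otimes\sigma)(\dt^\circ\otimes 1)\dt^\circ$, which you already have. So your plan is completable --- nothing in it would fail --- but as written, the theorem's central axiom rests on an expectation rather than an argument, and the two identities in (iii) are genuinely missing ideas rather than bookkeeping.
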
 

\begin{example}
With the structure of polynomial differentiation given above, $\rVec$ is a codifferential category with antiderivatives, and its integral transformation is of the form given in the theorem above \cite{integralAndCalcCats}. Indeed, in this case one finds that $\mathsf{K}_V$ is the identity on scalars and scalar multiplies a pure tensor $v_1 \otimes \ldots \otimes v_n$ by $n$:
    \[ \mathsf{K}_V(v_1 \otimes \ldots \otimes v_n) = n \cdot (v_1 \otimes \ldots \otimes v_n) \]
while $\mathsf{J}_V$ is also the identity on scalars but instead scalar multiplies $v_1 \otimes \ldots \otimes v_n$ by $n+1$:
\[ \mathsf{J}_V(v_1 \otimes \ldots \otimes v_n) = (n+1) \cdot (v_1 \otimes \ldots \otimes v_n). \]    
$\mathsf{K}$ is clearly invertible, and therefore so is $\mathsf{J}$, and one can calculate that the resulting integral transformation $\iT := \dt^\circ \mathsf{K}^{-1} = (\mathsf{J}^{-1} \otimes 1)\dt^\circ$ is the one given above in Example \ref{ex:rVecInt}.    
\end{example}

Many more examples of (co)differential and (co-)integral categories can be found in \cite[\S 7]{integralAndCalcCats}.  Our main focus in this paper is the differential and integral structure of arbitrary smooth functions.

\section{Some fundamentals of finitary algebra}\label{sec:fin_alg}

In Section \ref{sec:cinfty_modality}, we shall give a construction of a particular algebra modality $\SSS^\infty$ on the category $\rVec$ of real vector spaces, such that the category of $\SSS^\infty$-algebras is the category of $C^\infty$-rings.  The monad $\SSS^\infty$ is finitary, and so in the present section we shall first review and discuss some basics on finitary monads and Lawvere theories, which will provide the basis of our approach to defining $\SSS^\infty$ and equipping it with further structure.  While much of this material is standard, we also discuss certain results that are known among practitioners but whose statements we have not found to appear in the literature, such as Propositions \ref{thm:charn_fmonadic} and \ref{thm:cancellation_lemma}. 

\subsection{Finitary monads on locally finitely presentable categories}\label{sec:fin_lfp}

Let us recall that an object $C$ of a locally small category $\C$ is \textbf{finitely presentable} if the functor $\C(C,-):\C \rightarrow \Set$ preserves filtered colimits.  Here, following \cite{Kelly}, we use the term \textit{filtered colimit} to mean the colimit of a functor whose domain is not only filtered but also small\footnote{Again following \cite{Kelly}, we call a category \textit{small} if it has but a (small) set of isomorphism classes.}.  We denote by $\C_\f$ the full subcategory of $\C$ consisting of the finitely presentable objects.  Recall that $\C$ is \textbf{locally finitely presentable} (\textbf{l.f.p.}) iff $\C$ is cocomplete and the full subcategory $\C_\f$ is small and dense (in $\C$) \cite[Corollary 7.3]{Kelly}.

\begin{example_sub}\label{exa:rvec_lfp} $\rVec$ is l.f.p., and a vector space is finitely presentable if and only if it is finite-dimensional. Therefore $\rVec_\f$ is equivalent to the category $\Lin_\RR$ whose objects are the cartesian spaces $\RR^n$ and whose morphisms are arbitrary $\RR$-linear maps between these spaces.
\end{example_sub}

A functor between l.f.p.\ categories is \textbf{finitary} if it preserves filtered colimits.  Letting $\C$ be an l.f.p.\ category, a \textbf{finitary monad} on $\C$ is a monad on $\C$ whose underlying endofunctor is finitary.  By \cite[Proposition 7.6]{Kelly}, we have the following well-known result, which will be of central importance to us:

\begin{proposition_sub}\label{prop:equiv_fin}
Let $\C$ and $\D$ be l.f.p.\ categories, and let $\iota:\C_\f \hookrightarrow \C$ denote the inclusion.  Then there is an equivalence of categories
\begin{equation}\label{eq:equiv_fin}
\xymatrix {
[\C_\f,\D] \ar@/_0.5pc/[rr]_{\Lan_\iota}^{\sim} & & \Fin(\C,\D) \ar@/_0.5pc/[ll]_{\iota^*}
}
\end{equation}
between the category $[\C_\f,\D]$ of functors from $\C_\f$ to $\D$ and the category $\Fin(\C,\D)$ of finitary functors from $\C$ to $\D$.  The functor $\iota^*$ is given by restriction along $\iota$, and its pseudo-inverse $\Lan_\iota$ is given by left Kan-extension along $\iota$.  Furthermore, a functor $F:\C \rightarrow \D$ is finitary if and only if it is a left Kan extension along $\iota$, if and only if it is a left Kan extension of $F\iota$ along $\iota$.
\end{proposition_sub}

In this paper, we shall be concerned with the case of Proposition \ref{prop:equiv_fin} where $\D = \C$ for an l.f.p.\ category $\C$, in which case we have an equivalence $[\C_\f,\C] \simeq \Fin(\C,\C)$.  As described in \cite[\S 4]{KellyPower}, the category $[\C_\f,\C]$ carries a monoidal product for which the equivalence
$$[\C_\f,\C] \simeq \Fin(\C,\C)$$
is monoidal, so that finitary monads on $\C$ may be described equivalently as monoids in $[\C_\f,\C]$.

\begin{example_sub}\label{exa:rvec2}
Recalling that $\rVec$ is l.f.p. and $\rVec_\f \simeq \Lin_\RR$ (Example \ref{exa:rvec_lfp}), we have an equivalence
\begin{equation}\label{eq:equiv_rvec_fin}
\xymatrix {
[\Lin_\RR,\rVec] \ar@/_0.5pc/[rr]_{\Lan_\iota}^{\sim} & & \Fin(\rVec,\rVec) \ar@/_0.5pc/[ll]_{\iota^*}
}
\end{equation}
given by restriction and left Kan extension along the inclusion $\iota:\Lin_\RR \hookrightarrow \rVec$.  In \S \ref{sec:cinfty_modality}, we will define a finitary monad on $\rVec$ whose corresponding functor $\Lin_\RR \rightarrow \rVec$ sends $\RR^n$ to the space $C^\infty(\RR^n)$ of smooth, real-valued functions on $\RR^n$.
\end{example_sub}

\begin{proposition_sub}\label{thm:tens_pr_fin}
Let $F,G:\C \rightarrow \D$ be finitary functors between l.f.p.\ categories $\C$ and $\D$, and suppose that $\D$ is equipped with a functor $\otimes:\D \times \D \rightarrow \D$ that preserves filtered colimits in each variable separately.  Then the point-wise tensor product $F \otimes G = F(-)\otimes G(-):\C \rightarrow \D$ is finitary.
\end{proposition_sub}
\begin{proof}
$F \otimes G$ is the composite $\C \xrightarrow{\langle F, G\rangle} \D \times \D \xrightarrow{\otimes} \D$, and since $F$ and $G$ are finitary and colimits in $\D \times D$ are point-wise, it follows that $\langle F, G \rangle$ preserves filtered colimits.  Hence it suffices to show that $\otimes$ preserves filtered colimits.  Every filtered colimit in $\D \times \D$ is of the form $\varinjlim \langle D,E\rangle = (\varinjlim D,\varinjlim E)$ for functors $D,E:\mathcal{J} \rightarrow \D$ on a small, filtered category $\mathcal{J}$, and our assumption on $\otimes$ entails that
$$(\varinjlim D) \otimes (\varinjlim E) \cong \varinjlim_{J \in \mathcal{J}}\varinjlim_{K \in \mathcal{J}} DJ \otimes EK \cong \varinjlim_{J \in \mathcal{J}} DJ \otimes EJ\;,$$
since the diagonal functor $\Delta:\mathcal{J} \rightarrow \mathcal{J} \times \mathcal{J}$ is final as $\mathcal{J}$ is filtered \cite[2.19]{AdamekRosickyVitale}.
\end{proof}

Given categories $\C$ and $\D$ and a functor $G:\D \rightarrow \C$, we shall say that $G$ is \textbf{strictly monadic} if $G$ has a left adjoint such that the comparison functor $\D \rightarrow \C^\T$ is an isomorphism, where $\C^\T$ denotes the category of algebras of the induced monad $\T$ on $\C$.  Supposing that $\C$ is l.f.p., let us say that $G$ is \textbf{strictly finitary monadic} if $G$ is strictly monadic and the induced monad on $\C$ is finitary.  In the latter case, since $\C^\T$ is necessarily l.f.p \cite[Ch. 3]{AdamekRosicky}, it then follows that $\D$ is l.f.p.\ also.

We shall require the following characterizations of categories of algebras of finitary monads on a given l.f.p.\ category.  Given a functor $G:\D \rightarrow \C$, we shall say that a parallel pair of morphisms $f,g$ in $\D$ is a \textbf{$G$-absolute pair} if the pair $Gf,Gg$ has an absolute coequalizer in $\C$.

\begin{proposition_sub}\label{thm:charn_fmonadic}
Let $\C$ and $\D$ be l.f.p.\ categories, and let $G:\D \rightarrow \C$ be a functor.  Then the following are equivalent:
\begin{enumerate}
  \item $G$ is strictly finitary monadic;
  \item $G$ creates small limits, filtered colimits, and coequalizers of $G$-absolute pairs;
  \item $G$ preserves small limits and filtered colimits, and $G$ creates coequalizers of $G$-absolute pairs.
\end{enumerate}
\end{proposition_sub}
\begin{proof}
Suppose (1).  Then $G$ creates limits \cite[Proposition 4.3.1]{Borceux:Handbook2}, and since the induced monad $T$ preserves filtered colimits it follows that $G$ creates filtered colimits \cite[Proposition 4.3.2]{Borceux:Handbook2}.  Hence (2) holds, by Beck's Monadicity Theorem \cite[III.7, Thm. 1]{MacLane}.

Since $\C$ is not only cocomplete but also complete \cite[1.28]{AdamekRosicky}, the creation of small limits and filtered colimits by $G$ entails their preservation, so (2) implies (3).

Lastly suppose (3).  Then we deduce by \cite[1.66]{AdamekRosicky} that $G$ has a left adjoint, and we deduce by Beck's Monadicity Theorem \cite[III.7, Thm. 1]{MacLane} that  $G$ is strictly monadic.  But since $G$ preserves filtered colimits and its left adjoint $F$ preserves arbitrary colimits, it follows that the induced monad $T = GF$ preserves filtered colimits, so (1) holds.
\end{proof}

\begin{proposition_sub}\label{thm:cancellation_lemma}
Let $\C,\D,\E$ be l.f.p.\ categories, and suppose that we are given a commutative diagram of functors
$$
\xymatrix{
\E \ar[rr]^{U} \ar[dr]_H &      & \D \ar[dl]^{G}\\
                         & \C &
}
$$
in which $H$ and $G$ are strictly finitary monadic.  Then $U$ is strictly finitary monadic.
\end{proposition_sub}
\begin{proof}
Let us say that a functor $F$ \textit{preserves} (resp. \textit{creates}) if $F$ preserves (resp. creates) small limits, filtered colimits, and coequalizers of $F$-absolute pairs.  By Proposition \ref{thm:charn_fmonadic}, both $G$ and $H = GU$ preserve and create, so it follows by a straightforward argument that $U$ creates.  The result now follows from \ref{thm:charn_fmonadic}.
\end{proof}

\subsection{Some basics on Lawvere theories}\label{sec:LTh}

By definition, a \textbf{Lawvere theory} \cite{Law:PhD} is a small category $\cT$ with a denumerable set of distinct objects $T^0$, $T^1$, $T^2$, $\ldots$ in which each object $T^n$ $(n \in \NN)$ is equipped with a family of morphisms $(\pi_i:T^n \rightarrow T)_{i = 1}^n$ that present $T^n$ as an $n$-th power of the object $T = T^1$.  We can and will assume that the given morphism $\pi_1:T^1 \rightarrow T$ is the identity morphism.

\begin{example_sub}\label{exa:poly}
There is a Lawvere theory $\Poly_\RR$ whose objects are the cartesian spaces $\RR^n$ $(n \in \NN)$ and whose morphisms $p:\RR^n \rightarrow \RR^m$ are \textit{algebraic mappings}, i.e. maps $p = (p_1,\ldots,p_m)$ whose coordinate functions $p_j:\RR^n \rightarrow \RR$ $(j = 1, \ldots, m)$ are polynomial functions; equivalently, we may describe the morphisms of $\Poly_\RR$ as $m$-tuples of formal polynomials in $n$ variables. 
\end{example_sub}

\begin{example_sub}\label{exa:lin}
There is a Lawvere theory $\Lin_\RR$ whose objects are the same as those of $\Poly_\RR$ (\ref{exa:poly}), but whose morphisms $M:\RR^n \rightarrow \RR^m$ are \textit{$\RR$-linear maps}, which we shall identify with their corresponding $m \times n$ matrices.
\end{example_sub}

Given a Lawvere theory $\cT$, a \textbf{$\cT$-algebra} is a functor $A:\cT \rightarrow \Set$ that preserves finite powers (or, equivalently, preserves finite products).  Every $\cT$-algebra $A$ has an underlying set $\ca{A} = A(T)$, and for each $n$ the set $A(T^n)$ is an $n$-th power of the set $\ca{A}$.  Writing $\ca{A}^n$ to denote the usual choice of $n$-th power of $\ca{A}$, i.e. the set of $n$-tuples of elements of $\ca{A}$, we say that a $\cT$-algebra $A$ is \textit{normal} if $A$ sends each of the given power cones $(\pi_i:T^n \rightarrow T)_{i = 1}^n$ to the usual $n$-th power cone $(\pi_i:\ca{A}^n \rightarrow \ca{A})_{i = 1}^n$ (\cite[Definition 5.10]{Lu:EnrAlgTh}, \cite[2.4]{Lu:CvxAffCmt}).

$\cT$-algebras are the objects of a category in which the morphisms are natural transformations, and this category has an equivalent full subcategory consisting of the normal $\cT$-algebras (\cite[Theorem 5.14]{Lu:EnrAlgTh}, \cite[2.5]{Lu:CvxAffCmt}).

The category of normal $\cT$-algebras is equipped with a `forgetful' functor to $\Set$, given by evaluating at $T$, and this functor is strictly finitary monadic, so the category of normal $\cT$-algebras is \textit{isomorphic}\footnote{The category of \textit{all} $\cT$-algebras is merely \textit{equivalent} to the category of $\T$-algebras.} to the category of $\T$-algebras for an associated finitary monad $\T$ on $\Set$; e.g. see \cite[2.6]{Lu:CvxAffCmt}.  

\begin{example_sub}\label{exa:ralg}
The category of normal $\Poly_\RR$-algebras for the Lawvere theory $\Poly_\RR$ in Example \ref{exa:poly} is isomorphic to the category $\Alg{\RR}$ of commutative $\RR$-algebras (e.g. by\footnote{It is well known that the category of \textit{all} $\Poly_\RR$-algebras is (merely) \textit{equivalent} to the category of commutative $\RR$-algebras.} \cite[2.9]{Lu:CvxAffCmt}).
\end{example_sub}

\begin{example_sub}\label{exa:rvec}
The category of normal $\Lin_\RR$-algebras for the Lawvere theory $\Lin_\RR$ in \ref{exa:lin} is isomorphic to the category $\rVec$ of $\RR$-vector spaces (e.g. by\footnote{It is well known that the category of \textit{all} $\Lin_\RR$-algebras is (merely) \textit{equivalent} to $\rVec$.} \cite[2.8]{Lu:CvxAffCmt}).
\end{example_sub}

\section{The free $C^\infty$-ring modality on vector spaces}\label{sec:cinfty_modality}

There is a Lawvere theory $\Smooth$ whose objects are the cartesian spaces $\RR^n$ $(n \in \NN)$ and whose morphisms are arbitrary smooth maps between them.  By a \textbf{$C^\infty$-ring} we shall mean a normal $\Smooth$-algebra\footnote{More often, a $C^\infty$-ring is defined as an arbitrary $\Smooth$-algebra, but with the above definition we obtain an equivalent category, and one that is \textit{strictly finitary monadic} over $\Set$ (\S \ref{sec:fin_lfp}, \ref{sec:LTh}) and so \textit{isomorphic} (rather than just equivalent) to a variety of algebras in Birkhoff's sense \cite[III.8]{MacLane}.}.  Hence $C^\infty$-rings are the objects of a category $\CinftyRing$, the category of normal $\Smooth$-algebras (\S \ref{sec:LTh}).

With this definition, a $C^\infty$-ring $A:\Smooth \rightarrow \Set$ is uniquely determined by its \textit{underlying set} $X = A(\RR)$ and the mappings $\Phi_f = A(f):X^m = A(\RR^m) \rightarrow A(\RR) = X$ associated to smooth, real-valued functions $f \in C^\infty(\RR^m)$ $(m \in \NN)$.  Hence $A$ may be described equivalently as a pair $(X,\Phi)$ consisting of a set $X$ and a suitable family of mappings $\Phi_f$ of the above form, called \textit{operations}, satisfying certain conditions; this notation is as in \cite{joyce2011introduction}, where the resulting conditions on $\Phi$ are also stated explicitly.  A morphism of $C^\infty$-rings $\phi:(X,\Phi) \rightarrow (Y,\Psi)$ is given by a mapping $\phi:X \rightarrow Y$ that preserves all of the operations $\Phi_f$, $\Psi_f$, in the evident sense.

Note that there is a faithful inclusion
$$\Poly_\RR \hookrightarrow \Smooth,$$
where $\Poly_\RR$ is the Lawvere theory considered in Example \ref{exa:poly}. This inclusion functor induces a functor from the category of normal $\Smooth$-algebras to the category of normal $\Poly_\RR$-algebras, given by pre-composition.  In other words, we obtain a functor $\CinftyRing \rightarrow \Alg{\RR}$, so that every $C^\infty$-ring carries the structure of a commutative $\RR$-algebra. Moreover, we have a commutative diagram of faithful inclusions
$$
\xymatrix{
          & \Lin_\RR \ar@{^(->}[dl] \ar@{^(->}[dr] & \\
\Poly_\RR \ar@{^(->}[rr] &                         & \Smooth
}
$$
where $\Lin_\RR$ is the Lawvere theory considered in Example \ref{exa:lin}. These inclusions induce a commutative diagram of functors
\begin{equation}\label{eq:alg_functors}
\xymatrix{
\CinftyRing \ar[rr] \ar[dr]_U &      & \Alg{\RR} \ar[dl]^V\\
                              & \rVec &
}
\end{equation}
between the categories of normal algebras of these Lawvere theories, where we identify $\rVec$ and $\Alg{\RR}$ with the categories of normal $\Lin_\RR$-algebras and $\Poly_\RR$-algebras, respectively (Example \ref{exa:rvec}, Example \ref{exa:ralg}).  

The functor $U$ in \eqref{eq:alg_functors} participates in a commutative diagram
$$
\xymatrix{
\CinftyRing \ar[rr]^{U} \ar[dr]_H &      & \rVec \ar[dl]^{G}\\
                         & \Set &
}
$$
in which the forgetful functors $H$ and $G$ are strictly finitary monadic (by \S \ref{sec:LTh}).  Hence by Theorem \ref{thm:cancellation_lemma} we deduce the following result:

\begin{proposition}
There is a strictly finitary monadic functor $U:\CinftyRing \rightarrow \rVec$ that sends each $C^\infty$-ring $A$ to its underlying $\RR$-vector space (which we denote also by $A$).
\end{proposition}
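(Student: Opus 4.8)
The plan is to identify $U$ as the functor filling in the commutative triangle over $\Set$ displayed immediately above, and then to reduce the entire claim to the cancellation lemma, Proposition \ref{thm:cancellation_lemma}, applied with $\E = \CinftyRing$, $\D = \rVec$, and $\C = \Set$. In other words, once $U$ is in place with the correct action on objects, the assertion that $U$ is strictly finitary monadic should follow by a single appeal to that lemma.

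First I would confirm that $U$ exists with the asserted action on objects. By the discussion around \eqref{eq:alg_functors}, the faithful inclusion of Lawvere theories $\Lin_\RR \hookrightarrow \Smooth$ induces by precomposition a functor $U:\CinftyRing \rightarrow \rVec$ on the associated categories of normal algebras; on objects it carries a $C^\infty$-ring $A:\Smooth \rightarrow \Set$ to its restriction along $\Lin_\RR \hookrightarrow \Smooth$, which under the identification of $\rVec$ with the category of normal $\Lin_\RR$-algebras (Example \ref{exa:rvec}) is precisely the underlying $\RR$-vector space of $A$, whose underlying set is $A(\RR)$. Since the two forgetful functors $H$ and $G$ are both given by evaluation at $\RR = T^1$, and restriction along $\Lin_\RR \hookrightarrow \Smooth$ does not alter this evaluation, the triangle commutes; that is, $H = UG$.

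Next I would verify the hypotheses of Proposition \ref{thm:cancellation_lemma}. Both $G:\rVec \rightarrow \Set$ and $H:\CinftyRing \rightarrow \Set$ are the evaluation-at-$\RR$ forgetful functors on the categories of normal algebras of the Lawvere theories $\Lin_\RR$ and $\Smooth$, so each is strictly finitary monadic by the general facts recalled in \S\ref{sec:LTh}. The lemma also requires all three categories to be l.f.p.: $\Set$ is l.f.p.\ by standard facts, $\rVec$ is l.f.p.\ by Example \ref{exa:rvec_lfp}, and $\CinftyRing$ is l.f.p.\ because $H$ is strictly finitary monadic over the l.f.p.\ category $\Set$, so its domain is l.f.p.\ by the remark in \S\ref{sec:fin_lfp}. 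With these verifications in hand, Proposition \ref{thm:cancellation_lemma} applies directly and yields that $U$ is strictly finitary monadic.

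I expect no genuine obstacle here, since all the real content has already been absorbed into the cancellation lemma and into the monadicity of categories of normal Lawvere-theory algebras over $\Set$. The only step demanding any attention is the bookkeeping required to invoke the lemma---chiefly, establishing that $\CinftyRing$ is itself l.f.p.\ before the lemma can be cited---but this is immediate from the strict finitary monadicity of $H$ over the l.f.p.\ base $\Set$.
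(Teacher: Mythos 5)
Your proposal is correct and follows essentially the same route as the paper: the paper's (implicit) proof is exactly to place $U$ in the commutative triangle over $\Set$ with the strictly finitary monadic forgetful functors $H$ and $G$ coming from the Lawvere-theory facts of \S\ref{sec:LTh}, and then to invoke the cancellation lemma (Proposition \ref{thm:cancellation_lemma}). Your additional bookkeeping---checking that the triangle commutes on the nose and that $\CinftyRing$ is l.f.p.\ via the strict finitary monadicity of $H$ over $\Set$---is exactly the verification the paper leaves tacit, so there is no substantive difference.
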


\begin{definition}
We denote by $\SSS^\infty = (S^\infty,\mu,\eta)$ the finitary monad on $\rVec$ induced by the strictly finitary monadic functor $U:\CinftyRing \rightarrow \rVec$.  We call $\SSS^\infty$ the \textbf{free $C^\infty$-ring monad on the category of real vector spaces}.
\end{definition}
\begin{corollary}
The category $\CinftyRing$ of $C^\infty$-rings is isomorphic to the category $\rVec^{\SSS^\infty}$ of $\SSS^\infty$-algebras for the finitary monad $\SSS^\infty$ on $\rVec$.
\end{corollary}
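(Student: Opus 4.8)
The plan is to recognize that this corollary is an immediate formal consequence of the preceding Proposition together with the definitions already in place, so that essentially no new work is required beyond unpacking the meaning of \emph{strictly monadic}. The entire mathematical substance has been front-loaded into establishing that $U:\CinftyRing \rightarrow \rVec$ is strictly finitary monadic; granting that, the claimed isomorphism of categories is merely a restatement.

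First I would recall the definition of strict monadicity given earlier: a functor $G:\D \rightarrow \C$ is strictly monadic precisely when it has a left adjoint and the induced comparison functor $\D \rightarrow \C^\T$ is an \emph{isomorphism} of categories, where $\T$ is the monad arising from the adjunction. Instantiating this with $G = U$, $\D = \CinftyRing$, and $\C = \rVec$, the preceding Proposition tells us that $U$ is strictly finitary monadic, hence in particular strictly monadic; therefore the comparison functor $\CinftyRing \rightarrow \rVec^{\T}$ is an isomorphism of categories, where $\T$ is the monad induced by $U$.

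Next I would invoke the Definition of $\SSS^\infty$, by which the monad $\T$ induced by the strictly finitary monadic functor $U$ is \emph{by definition} the monad $\SSS^\infty = (S^\infty,\mu,\eta)$. Substituting $\T = \SSS^\infty$ into the previous step, the comparison functor is an isomorphism $\CinftyRing \xrightarrow{\sim} \rVec^{\SSS^\infty}$, which is exactly the asserted isomorphism of categories.

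There is no genuine obstacle here: all of the difficulty resided in proving strict finitary monadicity of $U$ (via the cancellation Proposition \ref{thm:cancellation_lemma} applied to the forgetful functors to $\Set$, which are strictly finitary monadic by the discussion of Lawvere theories in \S\ref{sec:LTh}). The only point meriting attention is the purely bookkeeping verification that the monad whose algebra category appears in the statement coincides with the monad $\SSS^\infty$ named in the Definition. It is worth emphasizing that the word ``isomorphic'' — rather than merely ``equivalent'' — is what the \emph{strictness} of the monadicity buys us: ordinary (Beck) monadicity would yield only an equivalence, whereas the definition of strictly monadic builds in that the comparison functor is an isomorphism on the nose.
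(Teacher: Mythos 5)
Your proposal is correct and is essentially the paper's own (implicit) argument: the paper attaches no proof to this corollary precisely because, as you observe, it is an immediate unpacking of \emph{strictly finitary monadic} from the preceding Proposition together with the Definition naming the induced monad $\SSS^\infty$. Your closing remark about strictness being what upgrades ``equivalent'' to ``isomorphic'' is exactly the point the paper is making by its choice of definitions.
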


We may of course apply similar reasoning to the functor $V:\Alg{\RR} \rightarrow \rVec$ in \eqref{eq:alg_functors}, thus deducing also that $V$ is strictly finitary monadic.  The induced monad $\Sym$ on $\rVec$ is described in Example \ref{exa:sym_on_rvec}.  Hence we may make the following identifications:
\begin{equation}\label{eqn:ralg_and_cinfty_ring}\Alg{\RR} = \rVec^{\Sym},\;\;\;\;\;\;\;\;\CinftyRing = \rVec^{\SSS^\infty}\;.\end{equation}

\begin{generic}\label{par:free_cinfty_on_fin_set}
Letting $n \in \NN$, it is well known that the set $C^\infty(\RR^n)$ of all smooth, real-valued functions on $\RR^n$ underlies the free $C^\infty$-ring on $n$ generators, i.e., the free $C^\infty$-ring on the set $\{1,2,...,n\}$ \cite{MoeRey}.  The operations
$$\Phi_g\;:\;(C^\infty(\RR^n))^m \rightarrow C^\infty(\RR^n)\;\;\;\;\;\;\;\;(g \in \C^\infty(\RR^m))$$
carried by this $C^\infty$-ring are given by 
\[\Phi_g(f_1,...,f_m) = g \circ \langle f_1,...,f_m \rangle\]
 where $\circ$ denotes right-to-left, non-diagrammatic composition.  The projections $\pi_i \in C^\infty(\RR^n)$ $(i=1,...,n)$ serve as generators, in the sense that the mapping $\pi_{(-)}:\{1,2,...,n\} \rightarrow C^\infty(\RR^n)$ given by $i \mapsto \pi_i$ presents this $C^\infty$-ring as free on the set $\{1,2,...,n\}$.  Given a mapping $a:\{1,2,...,n\} \rightarrow A$ valued in a $C^\infty$-ring $(A,\Psi)$, the unique morphism of $C^\infty$-rings $a':C^\infty(\RR^n) \rightarrow A$ such that $\pi_{(-)}a' = a$ is given by $a'(g) = \Psi_g(a(1),...,a(n))$.  From this we obtain the following:
\end{generic}

\begin{proposition}\label{thm:free_cinfty}
The free $C^\infty$-ring on the vector space $\RR^n$ $(n \in \NN)$ is $C^\infty(\RR^n)$, with operations as described above.  The unit morphism $\eta_{\RR^n}:\RR^n \rightarrow C^\infty(\RR^n)$ sends the standard basis vectors $e_1,...,e_n \in \RR^n$ to the projection functions $\pi_1,...,\pi_n$.  Given any linear map $\phi:\RR^m \rightarrow A$ valued in a $C^\infty$-ring $(A,\Psi)$, there is a unique morphism of $C^\infty$-rings $\phi^\sharp:C^\infty(\RR^n) \rightarrow A$ such that $\eta_{\RR^n}\phi^\sharp = \phi$, given by
$$\phi^\#(g) = \Psi_g(\phi(e_1),...,\phi(e_n))\;\;\;\;\;\;(g \in C^\infty(\RR^n))\;.$$
\end{proposition}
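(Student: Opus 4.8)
The plan is to deduce the entire statement from the already-recorded description of the free $C^\infty$-ring on a finite \emph{set} in \ref{par:free_cinfty_on_fin_set}, together with the elementary fact that $\RR^n$ is the free real vector space on $\{1,\ldots,n\}$, by a routine composition-of-adjunctions argument. Write $F \dashv U$ for the adjunction induced by the strictly finitary monadic functor $U:\CinftyRing \to \rVec$, so that $F$ sends a vector space to the free $C^\infty$-ring on it and $UF = S^\infty$. Let $G:\rVec \to \Set$ be the underlying-set functor, with its left adjoint $L:\Set \to \rVec$ (the free-vector-space functor), whose unit $\iota_S: S \to GLS$ sends each $i$ to the corresponding basis vector. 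The commutative triangle over $\Set$ in the excerpt gives $H = GU$, where $H:\CinftyRing \to \Set$ is the forgetful functor whose left adjoint is the free-$C^\infty$-ring-on-a-set functor of \ref{par:free_cinfty_on_fin_set}.

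First I would observe that, since $F \dashv U$ and $L \dashv G$, the composite $FL$ is left adjoint to $GU = H$. As $L$ sends $\{1,\ldots,n\}$ to $\RR^n$ (with $\iota_{\{1,\ldots,n\}}(i) = e_i$), we obtain $F(\RR^n) = F(L\{1,\ldots,n\}) = FL\{1,\ldots,n\}$, so that the free $C^\infty$-ring on the vector space $\RR^n$ coincides, as an object of $\CinftyRing$, with the free $C^\infty$-ring on the set $\{1,\ldots,n\}$. By \ref{par:free_cinfty_on_fin_set} the latter is $C^\infty(\RR^n)$ carrying the stated operations $\Phi_g(f_1,\ldots,f_m) = g \circ \langle f_1,\ldots,f_m\rangle$, which establishes the first sentence.

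Next, for the unit, I would use that the unit of the composite adjunction $FL \dashv H$ at a set $S$ is the pasting $S \xrightarrow{\iota_S} GLS \xrightarrow{G(\eta_{LS})} GUFLS$. Evaluating at $S = \{1,\ldots,n\}$, this composite unit is exactly the generator map $\pi_{(-)}:i \mapsto \pi_i$ of \ref{par:free_cinfty_on_fin_set}, while $\iota_{\{1,\ldots,n\}}(i) = e_i$; reading off the factorization forces $\eta_{\RR^n}(e_i) = \pi_i$, which is the second claim. Finally, for the universal property, the composite hom-bijection $\CinftyRing(C^\infty(\RR^n),A) \cong \Set(\{1,\ldots,n\}, HA)$ factors through $\rVec(\RR^n, UA)$: a linear map $\phi:\RR^n \to UA$ is sent to the function $i \mapsto \phi(e_i)$, whose set-level transpose is computed in \ref{par:free_cinfty_on_fin_set} to be $g \mapsto \Psi_g(\phi(e_1),\ldots,\phi(e_n))$; since this composite transpose agrees with the transpose $\phi^\sharp$ of $\phi$ under $F \dashv U$, both the formula and the uniqueness of $\phi^\sharp$ follow. (Here I read the proposition's ``$\phi:\RR^m \to A$'' as $\phi:\RR^n \to A$, which the displayed formula requires.)

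There is no serious obstacle; the argument is essentially bookkeeping, and the only points needing care are the direction of the composite-adjunction unit (under the paper's convention that functors compose right-to-left while morphisms compose diagrammatically) and the identification of $L\{1,\ldots,n\}$ with $\RR^n$ sending $i$ to $e_i$. As a self-contained alternative to the adjunction route for the universal property, I could instead verify the formula directly: by normality of the $\Smooth$-algebra $A$ the projection morphisms act as genuine projections, so $\Psi_{\pi_i}(a_1,\ldots,a_n) = a_i$, and hence any $C^\infty$-ring map $\psi$ with $\eta_{\RR^n}\psi = \phi$ satisfies $\psi(\pi_i) = \phi(e_i)$; since every $g \in C^\infty(\RR^n)$ equals $\Phi_g(\pi_1,\ldots,\pi_n) = g \circ \langle \pi_1,\ldots,\pi_n\rangle$, preservation of operations forces $\psi(g) = \Psi_g(\phi(e_1),\ldots,\phi(e_n))$, yielding existence, the formula, and uniqueness at once.
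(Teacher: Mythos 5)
Your proposal is correct and follows essentially the same route as the paper, whose entire proof is the one-line observation that $\RR^n$ is the free vector space on the set $\{1,2,\ldots,n\}$ so that the claim follows from \ref{par:free_cinfty_on_fin_set}; you have simply made explicit the composition-of-adjunctions bookkeeping (and the direct verification via normality) that the paper leaves implicit. You were also right to read the proposition's ``$\phi:\RR^m \rightarrow A$'' as a typo for $\phi:\RR^n \rightarrow A$.
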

\begin{proof}
The vector space $\RR^n$ is free on the set $\{1,2,...,n\}$, so this follows from \ref{par:free_cinfty_on_fin_set}.
\end{proof}

\begin{remark}\label{rem:cinf_sinf}
By applying Proposition \ref{thm:free_cinfty} and choosing the left adjoint to $U$ suitably, we can and will assume that
$$S^\infty(\RR^n) = C^\infty(\RR^n)\;.$$
Accordingly, we will denote the restriction of $S^\infty$ along the inclusion $\iota:\Lin_\RR \hookrightarrow \rVec$ by
$$C^\infty = S^\infty \iota\;\;:\;\;\Lin_\RR \longrightarrow \rVec\;.$$
Hence, since $S^\infty$ is finitary, we deduce by Proposition \ref{prop:equiv_fin} and Example \ref{exa:rvec2} that $S^\infty$ is a left Kan extension of $C^\infty:\Lin_\RR \rightarrow \rVec$ along $\iota$.  Symbolically,
$$S^\infty = \Lan_\iota C^\infty\;.$$
Hence
$$S^\infty(V) \;\;\;\;\cong \varinjlim_{(\RR^n,\phi) \:\in\: \Lin_\RR \slash V}C^\infty(\RR^n)$$
naturally in $V \in \rVec$, where $\Lin_\RR\slash V$ denotes the comma category whose objects are pairs $(\RR^n,\phi)$ consisting of an object $\RR^n$ of $\Lin_\RR$ and a morphism $\phi:\RR^n \rightarrow V$ in $\rVec$.  Equivalently, the maps
$$S^\infty(\phi)\;:\;S^\infty(\RR^n) = C^\infty(\RR^n) \longrightarrow S^\infty(V)\;,\;\;\;\;\;\;(\RR^n,\phi) \in \Lin_\RR\slash V\;,$$
present $S^\infty(V)$ as a colimit of the composite functor $\Lin_\RR\slash V \xrightarrow{\pi} \Lin_\RR \xrightarrow{C^\infty} \rVec$ (where $\pi$ is the forgetful functor).
\end{remark}

\begin{proposition}\label{thm:charn_cinfty}
The functor $C^\infty$ sends each $\RR$-linear map $h:\RR^n \rightarrow \RR^m$ to the map $C^\infty(h):\C^\infty(\RR^n) \rightarrow C^\infty(\RR^m)$ that sends each $g \in C^\infty(\RR^n)$ to the composite
$$\RR^m \xrightarrow{h^*} \RR^n \xrightarrow{g} \RR$$
where $h^*$ denotes the transpose (or adjoint) of $h$.
\end{proposition}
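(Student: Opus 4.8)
The plan is to unwind the definition $C^\infty = S^\infty\iota$ from Remark~\ref{rem:cinf_sinf}, so that $C^\infty(h) = S^\infty(h)$ for any $\RR$-linear map $h\colon\RR^n\to\RR^m$, and then to pin down $S^\infty(h)$ through the universal property of the free $C^\infty$-ring. Since $\eta\colon\mathrm{Id}\Rightarrow S^\infty$ is natural, the morphism $S^\infty(h)\colon C^\infty(\RR^n)\to C^\infty(\RR^m)$ is the unique morphism of $C^\infty$-rings satisfying $\eta_{\RR^n}\,S^\infty(h) = h\,\eta_{\RR^m}$. Thus $S^\infty(h)$ is exactly the map $\phi^\sharp$ furnished by Proposition~\ref{thm:free_cinfty} in the case where the target $C^\infty$-ring is the free $C^\infty$-ring $C^\infty(\RR^m)$ and $\phi = h\,\eta_{\RR^m}\colon\RR^n\to C^\infty(\RR^m)$.

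The first step is then to compute the values $\phi(e_i)$, since these are the only inputs to the formula for $\phi^\sharp$. Writing $h_{ji}$ for the matrix entries of $h$, so that $h(e_i) = \sum_j h_{ji}e_j$, and using that $\eta_{\RR^m}$ is linear with $\eta_{\RR^m}(e_j) = \pi_j$ (Proposition~\ref{thm:free_cinfty}), I obtain $\phi(e_i) = \sum_j h_{ji}\pi_j$. The point to observe is that this smooth function $\RR^m\to\RR$ is precisely the $i$-th coordinate function of the transpose $h^*\colon\RR^m\to\RR^n$; that is, $\phi(e_i)(x) = \sum_j h_{ji}x_j = (h^*x)_i$, so that the tuple $\langle\phi(e_1),\dots,\phi(e_n)\rangle\colon\RR^m\to\RR^n$ coincides with $h^*$.

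Finally I would feed this into the explicit formula of Proposition~\ref{thm:free_cinfty}, recalling from~\ref{par:free_cinfty_on_fin_set} that the operation $\Psi_g$ on the free $C^\infty$-ring $C^\infty(\RR^m)$ is composition $\Psi_g(f_1,\dots,f_n) = g\circ\langle f_1,\dots,f_n\rangle$. This yields $S^\infty(h)(g) = \Psi_g(\phi(e_1),\dots,\phi(e_n)) = g\circ\langle\phi(e_1),\dots,\phi(e_n)\rangle = g\circ h^*$, which is the claimed composite $\RR^m\xrightarrow{h^*}\RR^n\xrightarrow{g}\RR$.

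There is no genuine conceptual obstacle here; the proof is a direct computation once the right universal property is invoked. The only point requiring care is the bookkeeping of the transpose convention: one must verify that assembling the coordinate functions $\phi(e_i) = \sum_j h_{ji}\pi_j$ into a tuple produces $h^*$ rather than $h$, which is exactly where the swap of matrix indices inherent in the adjoint appears. I would also make sure the identification $S^\infty(\RR^n) = C^\infty(\RR^n)$ and the unit description are taken precisely as fixed in Remark~\ref{rem:cinf_sinf} and Proposition~\ref{thm:free_cinfty}, so that the formula for $\phi^\sharp$ applies verbatim.
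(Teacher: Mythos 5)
Your proposal is correct and follows essentially the same route as the paper: both identify $C^\infty(h)$ with the unique $C^\infty$-ring morphism $(h\eta_{\RR^m})^\sharp$ satisfying $\eta_{\RR^n}C^\infty(h) = h\,\eta_{\RR^m}$ via Proposition~\ref{thm:free_cinfty}, and then verify by the matrix computation $\eta(h(e_j)) = \sum_i h_{ij}\pi_i = \pi_j \circ h^*$ that the resulting tuple of coordinate functions is exactly $h^*$, giving $C^\infty(h)(g) = g \circ h^*$. The only differences are notational (your indexing of the matrix entries versus the paper's), so nothing further is needed.
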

\begin{proof}
By definition $C^\infty$ sends $h$ to the unique $C^\infty$-ring morphism $C^\infty(h):C^\infty(\RR^n) \rightarrow C^\infty(\RR^m)$ such that $\eta_{\RR^n}C^\infty(h) = h\eta_{\RR^m}$.  Hence, in view of Proposition \ref{thm:free_cinfty} and \ref{par:free_cinfty_on_fin_set} we deduce that $C^\infty(h) = (h\eta_{\RR^m})^\#$ sends each $g \in C^\infty(\RR^n)$ to
$$C^\infty(h)(g) = (h\eta_{\RR^m})^\#(g) = \Phi_g(\eta(h(e_1)),...,\eta(h(e_n))) = g \circ \langle \eta(h(e_1)),...,\eta(h(e_n))\rangle\;.$$
Letting $(h_{ij})$ be the matrix representation of $h$, we know that for each $j = 1,...,n$, 
\[h(e_j) = \sum_{i=1}^m h_{ij}e'_i\]
 where $e'_1,...,e'_m$ are the standard basis vectors for $\RR^m$, so by linearity 
 \[\eta(h(e_j)) = \sum_{i=1}^m h_{ij}\pi_i = \pi_j \circ h^*.\]
 Hence $C^\infty(h)(g) = g \circ \langle \pi_1 \circ h^*,...,\pi_n \circ h^*\rangle = g \circ h^*$.
\end{proof}

We now employ a characterization of algebra modalities in \cite{blo} to show that $\SSS^\infty$ carries the structure of an algebra modality (Definition \ref{def:alg_mod}).  Given a symmetric monoidal category $\C$, we shall denote by $\CMon(\C)$ the category of commutative monoids in $\C$.  If the forgetful functor $\CMon(\C) \rightarrow \C$ has a left adjoint, then we denote the induced monad on $\C$ by by $\Sym$ and call it the \textbf{symmetric algebra monad}, generalizing Example \ref{exa:sym_on_rvec}, and we say that \textit{the symmetric algebra monad exists}.

\begin{proposition}\label{thm:alg_mod}
Let $\C$ be a symmetric monoidal category $\C$ with reflexive coequalizers that are preserved by $\otimes$ in each variable, and assume that the symmetric algebra monad $\Sym$ on $\C$ exists.  The following are in bijective correspondence:
\begin{enumerate}
\item[(1)] algebra modalities $(\SSS,\mathsf{m},\mathsf{u})$ on $\C$;
\item[(2)] pairs $(\SSS,\lambda)$ consisting of a monad $\SSS$ on $\C$ and a monad morphism $\lambda:\Sym \rightarrow \SSS$;
\item[(3)] pairs $(\SSS,M)$ consisting of a monad $\SSS$ on $\C$ and a functor $M:\C^\SSS \rightarrow \CMon(\C)$ that commutes with the forgetful functors valued in $\C$.
\end{enumerate}
\end{proposition}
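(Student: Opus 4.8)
The plan is to reduce all three items to standard facts about Eilenberg--Moore categories, after identifying $\CMon(\C)$ with $\C^\Sym$. First I would show that the forgetful functor $U\colon\CMon(\C)\to\C$ is monadic, so that $\CMon(\C)\cong\C^\Sym$ as categories over $\C$. The existence of $\Sym$ supplies the left adjoint, and it remains to verify the hypotheses of Beck's monadicity theorem, namely that $U$ creates coequalizers of reflexive $U$-split pairs. This is precisely where the assumption that $\C$ has reflexive coequalizers preserved by $\otimes$ in each variable is used: given such a pair of monoid morphisms one forms its coequalizer $q$ in $\C$, and since $\otimes$ preserves this coequalizer in each variable (so that $q\otimes q$ is again a coequalizer), the multiplication descends uniquely along $q$; one then checks the monoid axioms and the required universality. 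After this identification, item (3) reads as a monad $\SSS$ together with a functor $\C^\SSS\to\C^\Sym$ commuting with the forgetful functors to $\C$.

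Next I would invoke the standard bijection between monad morphisms and functors between Eilenberg--Moore categories over the base: monad morphisms $\lambda\colon\Sym\to\SSS$ correspond bijectively to functors $N\colon\C^\SSS\to\C^\Sym$ commuting with the forgetful functors. In one direction, $\lambda$ induces $\lambda^{*}=N$, sending an $\SSS$-algebra $(A,\alpha)$ to the $\Sym$-algebra whose structure map is $\lambda_{A}$ followed by $\alpha$. In the other, one recovers $\lambda_{C}$ by evaluating $N$ on the free algebra $(SC,\mu_{C})$ to obtain a $\Sym$-structure map on $SC$ and precomposing it with $\Sym(\eta_{C})$. Together with the previous step, this yields the equivalence of (2) and (3).

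It remains to match algebra modalities with monad morphisms $\Sym\to\SSS$, giving the equivalence of (1) and (2). Given an algebra modality, each $SC$ is a commutative monoid, so by the universal property of the free commutative monoid $\Sym C$ (with its inclusion of generators $C\to\Sym C$) there is a unique monoid morphism $\lambda_{C}\colon\Sym C\to SC$ extending the unit $\eta_{C}\colon C\to SC$ of $\SSS$. I would then check naturality of $\lambda$ and the two monad-morphism axioms: the unit axiom holds by construction, while for the multiplication axiom I would observe that the two composites in question are both monoid morphisms out of the free commutative monoid $\Sym\Sym C$ agreeing on the generators, so that freeness forces them equal; the fact that one of them is a monoid morphism is exactly the algebra-modality hypothesis that each $\mu_{C}$ is a monoid morphism. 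Conversely, a monad morphism $\lambda$ produces, via $\lambda^{*}$ applied to the free algebras $(SC,\mu_{C})$, a commutative-monoid structure $(\m_{C},\mathsf{u}_{C})$ on each $SC$; functoriality of $\lambda^{*}$ on free algebras gives naturality, and applying $\lambda^{*}$ to the algebra morphism $\mu_{C}$ shows that each $\mu_{C}$ is a monoid morphism. The uniqueness clause of the universal property shows that these two assignments are mutually inverse.

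I expect the main obstacle to be the first step, the monadicity of $U\colon\CMon(\C)\to\C$, since it is the only place where the hypotheses on $\C$ are genuinely needed: care is required both to descend the multiplication along the coequalizer and to confirm that preservation of the coequalizer by $\otimes$ supplies exactly the universal property needed to define it. By contrast, the verifications in the last step are calculational rather than conceptual, the recurring device being that appeals to the freeness of $\Sym\Sym C$ reduce each monad-morphism axiom to an identity checked on generators.
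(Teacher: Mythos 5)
Your proposal is correct and follows essentially the same route as the paper's proof: identify $\CMon(\C)\cong\C^\Sym$ via a monadicity theorem for the forgetful functor (this is exactly where the paper, too, uses the reflexive-coequalizer hypotheses), invoke the standard bijection between monad morphisms $\Sym\to\SSS$ and functors $\C^\SSS\to\C^\Sym$ over $\C$ for (2)$\leftrightarrow$(3), and obtain (1)$\leftrightarrow$(2) by taking $\lambda_C$ to be the unique monoid morphism extending $\eta_C$. The only differences are cosmetic --- the paper cites the Crude Monadicity Theorem where you verify a Beck-type condition directly, and you spell out the freeness arguments that the paper declares straightforward.
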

\begin{proof}
We briefly sketch the correspondences; the verifications are straightforward, and the existence of a bijection between (1) and (2) is asserted in \textnormal{\cite[Proposition 4.2]{blo}}, although with unnecessary blanket assumptions of additivity and finite biproducts.

Given $(\SSS,\mathsf{m},\mathsf{u})$ as in (1), with $\SSS = (S,\mu,\eta)$, the associated monad morphism $\lambda$ is obtained by defining $\lambda_C:\Sym(C) \rightarrow SC$ as the unique \textit{monoid} morphism such that $\eta^\Sym_C \lambda_C = \eta_C$, where $\eta^\Sym:1 \Rightarrow \Sym$ is the unit.

Given a monad $\SSS$ on $\C$, \cite[Proposition A.26]{AdamekRosickyVitale} yields a bijection between monad morphisms $\lambda:\Sym \rightarrow \SSS$ and functors $M:\C^\SSS \rightarrow \C^\Sym$ that commute with the forgetful functors to $\C$.  But the above hypotheses entail that the forgetful functor $V:\CMon(\C) \rightarrow \C$ is a right adjoint and creates reflexive coequalizers, so by the well-known Crude Monadicity Theorem (in the form given in \cite[Theorem 2.3.3.8]{Lu:PhD}) we deduce that $V$ is strictly monadic.  Hence $\CMon(\C) \cong \C^\Sym$ and the bijection between (2) and (3) is obtained.

Any functor $M$ as in (3) endows each free $\SSS$-algebra $SC$ with the structure of a commutative monoid in $\C$, which we may write as $(SC,\mathsf{m}_C,\mathsf{u}_C)$, and we thus obtain an algebra modality $(\SSS,\mathsf{m},\mathsf{u})$.
\end{proof}

\begin{corollary}\label{thm:free_cinfty_mod}
The free $C^\infty$-ring monad $\SSS^\infty$ on $\rVec$ carries the structure of an algebra modality $(\SSS^\infty,\mathsf{m},\mathsf{u})$.
\end{corollary}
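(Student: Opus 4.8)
The plan is to invoke Proposition \ref{thm:alg_mod} with $\C = \rVec$, using the third of its equivalent characterizations: an algebra modality on $\C$ is the same as a pair $(\SSS,M)$ in which $M$ is a functor from the category of $\SSS$-algebras to $\CMon(\C)$ that commutes with the forgetful functors valued in $\C$. So the task reduces to checking that $\rVec$ satisfies the hypotheses of that proposition and then exhibiting the functor $M$ for $\SSS = \SSS^\infty$.

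First I would verify the hypotheses of Proposition \ref{thm:alg_mod} for $\rVec$. The category $\rVec$ is symmetric monoidal under the usual tensor product, and it is cocomplete, so in particular it has reflexive coequalizers; moreover, for each vector space $W$ the functor $(-)\otimes W$ is a left adjoint (its right adjoint being the internal hom), hence preserves all colimits, so $\otimes$ preserves reflexive coequalizers in each variable separately. It then remains to note that the symmetric algebra monad on $\rVec$ exists: a commutative monoid in $(\rVec,\otimes,\RR)$ is precisely a commutative $\RR$-algebra, so $\CMon(\rVec) = \Alg{\RR}$, and the forgetful functor $\CMon(\rVec) \to \rVec$ has a left adjoint sending a vector space to its symmetric algebra, inducing exactly the monad $\Sym$ of Example \ref{exa:sym_on_rvec}.

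With the hypotheses in hand, I would produce the required functor $M$ directly from the structure already established. Using the identifications \eqref{eqn:ralg_and_cinfty_ring}, the category $\rVec^{\SSS^\infty}$ of $\SSS^\infty$-algebras is $\CinftyRing$ and the category $\CMon(\rVec)$ is $\Alg{\RR}$, so a functor $M:\rVec^{\SSS^\infty} \to \CMon(\rVec)$ commuting with the forgetful functors to $\rVec$ is the same thing as a functor $\CinftyRing \to \Alg{\RR}$ commuting with the vertical functors $U$ and $V$ in diagram \eqref{eq:alg_functors}. But such a functor is already at hand: it is the horizontal functor in \eqref{eq:alg_functors} induced by the faithful inclusion $\Poly_\RR \hookrightarrow \Smooth$, expressing that every $C^\infty$-ring carries an underlying commutative $\RR$-algebra structure, and the commutativity of \eqref{eq:alg_functors} asserts precisely that it commutes with $U$ and $V$. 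Taking $M$ to be this functor and applying Proposition \ref{thm:alg_mod} then yields the desired algebra modality $(\SSS^\infty,\mathsf{m},\mathsf{u})$.

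Since the real work—the correspondence of Proposition \ref{thm:alg_mod} and the strict finitary monadicity underlying \eqref{eqn:ralg_and_cinfty_ring} and diagram \eqref{eq:alg_functors}—has already been carried out, there is no single hard step left; the proof is essentially an assembly of these pieces. The only point demanding care is bookkeeping: confirming that the monad on $\rVec$ induced by $\CMon(\rVec) \to \rVec$ genuinely coincides with $\Sym$, and that the forgetful functor $\rVec^{\SSS^\infty} \to \rVec$ arising from the monad is literally the functor $U$ used to define $\SSS^\infty$, so that the commutativity of \eqref{eq:alg_functors} is exactly the hypothesis required by characterization (3).
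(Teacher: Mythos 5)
Your proposal is correct and is essentially the paper's own argument, just written out in full: the paper's proof is the one-liner ``$\CMon(\rVec) = \Alg{\RR}$, so this follows from Proposition \ref{thm:alg_mod} in view of \eqref{eq:alg_functors}'', which is precisely your use of characterization (3) with $M$ taken to be the horizontal functor $\CinftyRing \rightarrow \Alg{\RR}$ in \eqref{eq:alg_functors}. Your additional verification of the hypotheses of Proposition \ref{thm:alg_mod} for $\rVec$ and the bookkeeping about the identifications \eqref{eqn:ralg_and_cinfty_ring} are details the paper leaves implicit, and they are handled correctly.
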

\begin{proof}
$\CMon(\rVec) = \Alg{\RR}$, so this follows from Proposition \ref{thm:alg_mod} in view of \eqref{eq:alg_functors}.
\end{proof}

\begin{remark}\label{rem:sinfty_alg_mod_mnd_mor}
We call the algebra modality $(\SSS^\infty,\mathsf{m},\mathsf{u})$ the \textbf{free $C^\infty$-ring modality}.  For each real vector space $V$, $(S^\infty(V), \mathsf{m}_V,\mathsf{u}_V)$ is the $\RR$-algebra underlying the free $C^\infty$-ring on $V$.  In view of the proof of Proposition \ref{thm:alg_mod}, the corresponding monad morphism $\lambda:\Sym \rightarrow \SSS^\infty$ consists of mappings
$$\lambda_V:\Sym(V) \longrightarrow S^\infty(V)\;\;\;\;\;\;\;\;(V \in \rVec)$$
each characterized as the unique $\RR$-algebra homomomorphism with $\eta^\Sym_C \lambda_C = \eta_C$, where $\eta^\Sym:1 \Rightarrow \Sym$ and $\eta:1 \Rightarrow S^\infty$ denote the units.  In the case where $V = \RR^n$, we may identify $\Sym(\RR^n)$ with the polynomial $\RR$-algebra $\RR[x_1, \ldots, x_n]$, and $\lambda_{\RR^n}$ is simply the inclusion
	\[ \lambda_{\RR^n}:\Sym(\RR^n) = \RR[x_1, \ldots, x_n] \hookrightarrow \cInf(\RR^n)\;.\]
Indeed, the latter is an $\RR$-algebra homomorphism that sends the generators $x_i$ to the generators $\pi_i$ $(i=1,...,n)$.
\end{remark}

\section{Differential structure}\label{sec:diff}

Our goal in this section is to give codifferential structure for the free $C^\infty$-ring modality $\SSS^\infty$ (Corollary \ref{thm:free_cinfty_mod}).  Note that this was also done in the original differential categories paper \cite[\S 3]{differentialCats}, but for reasons explained in Section \ref{sec:intro} we will instead employ a different approach: we will exploit the fact that $\SSS^\infty$ is a finitary monad, in order to obtain its differential structure by left Kan-extension from structure on the finite-dimensional spaces, which we will describe explicitly.  This new approach will later enable us to also endow $\SSS^\infty$ with integral structure in Section \ref{sec:anti}.  Moreover, we believe that it is helpful to have multiple viewpoints on this key example.  

To demonstrate codifferential structure for $\SSS^\infty$, we will use the following theorem from \cite{blo}:

\begin{theorem}\label{thm:blo} \textnormal{\cite[6.1]{blo}} Suppose that $\C$ is an additive symmetric monoidal category with reflexive coequalizers that are preserved by the tensor product in each variable, and suppose that the symmetric algebra monad $\Sym$ on $\C$ exists.  Then to equip $\C$ with the structure of a codifferential category (in the sense of \cite{differentialCats}) is, equivalently, to equip $\C$ with
\begin{itemize}
	\item a monad $\mathsf{S} = (S,\eta,\mu)$;
	\item a monad morphism $\lambda: \Sym \to \mathsf{S}$;
	\item a natural transformation $\dt: SC \to SC \otimes C$ $\;(C \in \C)$;
\end{itemize}
such that
\begin{enumerate}[(a)]
	\item for each object $C$ of $\C$, 
	\[
	\xymatrix{\Sym(C) \ar[r]^{\lambda_{C}} \ar[d]_{\dt^{\Sym}_{C}} & S(C) \ar[d]^{\dt_{C}} \\ \Sym(C) \otimes V \ar[r]_{\lambda_{C} \otimes 1} & S(C) \otimes C}
	\]
	commutes (where $\dt^{\Sym}$ is the canonical deriving transformation on $\Sym$);
	\item the chain rule axiom of Definition \ref{defn:codiff} holds for $\dt$.
\end{enumerate}
\end{theorem}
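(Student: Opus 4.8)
The plan is to treat this as a bijective correspondence of structures and to reduce everything to the polynomial case through the monad morphism $\lambda$. First note that, by Proposition \ref{thm:alg_mod}, giving a monad $\SSS$ together with a monad morphism $\lambda:\Sym\to\SSS$ is the same as giving an algebra modality $(\SSS,\m,\mathsf{u})$; under this correspondence each $\lambda_C$ is a monoid morphism, so $\mathsf{u}_C=\mathsf{u}^{\Sym}_C\lambda_C$, and by the defining property of $\lambda$ we have $\eta_C=\eta^{\Sym}_C\lambda_C$. Condition (b) is literally axiom \textbf{[d.4]}, and since the notion of codifferential category intended here is that of \cite{differentialCats} (so that the interchange rule \textbf{[d.5]} is \emph{not} required), the theorem amounts to the assertion that, in the presence of the modality and the chain rule, condition (a) is equivalent to the conjunction of \textbf{[d.1]}, \textbf{[d.2]} and \textbf{[d.3]}. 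Throughout I will use that the canonical $\dt^{\Sym}$ satisfies \textbf{[d.1]--[d.3]}.

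Assuming (a), axioms \textbf{[d.1]} and \textbf{[d.3]} fall out immediately. For \textbf{[d.1]} I compute $\mathsf{u}_C\dt_C=\mathsf{u}^{\Sym}_C\lambda_C\dt_C=\mathsf{u}^{\Sym}_C\dt^{\Sym}_C(\lambda_C\otimes 1)=0$, using (a) and then \textbf{[d.1]} for $\dt^{\Sym}$. For \textbf{[d.3]}, $\eta_C\dt_C=\eta^{\Sym}_C\lambda_C\dt_C=\eta^{\Sym}_C\dt^{\Sym}_C(\lambda_C\otimes 1)=(\mathsf{u}^{\Sym}_C\otimes 1)(\lambda_C\otimes 1)=\mathsf{u}_C\otimes 1$, using (a) and \textbf{[d.3]} for $\dt^{\Sym}$.

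The crux is the Leibniz rule \textbf{[d.2]}, and the key device is to express the monoid multiplication through the monad. By the construction of the modality in Proposition \ref{thm:alg_mod}, the multiplication on the free algebra factors as
\[ \m_C \;=\; \nabla_{SC}\,\lambda_{SC}\,\mu_C \;:\; SC\otimes SC \xrightarrow{\nabla_{SC}} \Sym(SC)\xrightarrow{\lambda_{SC}} S(SC)\xrightarrow{\mu_C} SC, \]
where $\nabla_{SC}$ is the canonical map sending a tensor of two degree-one elements to their product in $\Sym(SC)$. The plan is then to compute $\m_C\dt_C$ by (i) rewriting $\mu_C\dt_C$ via the chain rule (b), (ii) rewriting the resulting $\lambda_{SC}\dt_{SC}$ via condition (a) at the object $SC$, and (iii) evaluating $\nabla_{SC}\dt^{\Sym}_{SC}=(\eta^{\Sym}_{SC}\otimes 1)+\sigma(\eta^{\Sym}_{SC}\otimes 1)$ using \textbf{[d.2]} and \textbf{[d.3]} for $\dt^{\Sym}$. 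After substituting $\eta^{\Sym}_{SC}\lambda_{SC}=\eta_{SC}$ and invoking the monad unit law $\eta_{SC}\mu_C=1$, the first summand collapses to $(1\otimes\dt)(\m\otimes 1)$ and the second to $\sigma(1\otimes\dt)(\m\otimes 1)$, which commutativity of $\m$ identifies with $(\dt\otimes 1)(1\otimes\sigma)(\m\otimes 1)$; together these give exactly \textbf{[d.2]}. I expect this telescoping calculation to be the main obstacle, both in pinning down the factorization of $\m$ precisely and in bookkeeping the symmetries so that the two Leibniz terms emerge.

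For the forward direction, starting from a codifferential category, (b) is again just \textbf{[d.4]}, so only (a) requires proof. I would argue that both $\lambda_C\dt_C$ and $\dt^{\Sym}_C(\lambda_C\otimes 1)$ are derivations $\Sym C\to SC\otimes C$ valued in $SC\otimes C$ regarded as a $\Sym C$-module through $\lambda_C$ and $\m_C$: the former because $\dt_C$ is a derivation (by \textbf{[d.2]}) precomposed with the algebra morphism $\lambda_C$, the latter because $\dt^{\Sym}_C$ is a derivation postcomposed with the $\Sym C$-linear map $\lambda_C\otimes 1$. By the universal property of $\Sym C$ as the free commutative monoid on $C$ (equivalently, of its module of K\"ahler differentials), a derivation out of $\Sym C$ is determined by its restriction along $\eta^{\Sym}_C$; both restrictions equal $\mathsf{u}_C\otimes 1$, since $\eta^{\Sym}_C\lambda_C\dt_C=\eta_C\dt_C=\mathsf{u}_C\otimes 1$ by \textbf{[d.3]} and $\eta^{\Sym}_C\dt^{\Sym}_C(\lambda_C\otimes 1)=(\mathsf{u}^{\Sym}_C\otimes 1)(\lambda_C\otimes 1)=\mathsf{u}_C\otimes 1$. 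Hence the two agree, which is (a). Finally, one checks that these two passages are mutually inverse, which is routine given Proposition \ref{thm:alg_mod}.
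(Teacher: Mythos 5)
The paper itself offers no proof of this statement: it is quoted directly from \cite[Theorem 6.1]{blo}, so your argument can only be judged on its own terms, not against a proof in the paper. Your framing is the right one: via Proposition \ref{thm:alg_mod} the pair (monad, monad morphism $\lambda$) is interchangeable with an algebra modality, condition (b) is literally \textbf{[d.4]}, and the theorem reduces to ``(a) $\Leftrightarrow$ \textbf{[d.1]} $\wedge$ \textbf{[d.2]} $\wedge$ \textbf{[d.3]} in the presence of the chain rule.'' Your backward direction is, moreover, complete and correct: the factorization $\m_C = \nabla_{SC}\,\lambda_{SC}\,\mu_C$ is exactly how the monoid structure on $SC$ is induced from the $\Sym$-algebra structure $\lambda_{SC}\mu_C$ in the proof of Proposition \ref{thm:alg_mod}, and the telescoping computation --- chain rule, then (a) at the object $SC$, then $\eta^{\Sym}_{SC}\lambda_{SC}=\eta_{SC}$ and the monad unit law, then naturality of $\sigma$ and commutativity of $\m$ --- does produce the two Leibniz terms.

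The gap is in the forward direction, at the single step carrying all its weight: ``a derivation out of $\Sym C$ is determined by its restriction along $\eta^{\Sym}_C$.'' This is \emph{not} the universal property of $\Sym C$ as the free commutative monoid on $C$: that property concerns monoid morphisms out of $\Sym C$, and neither $\lambda_C\dt_C$ nor $\dt^{\Sym}_C(\lambda_C\otimes 1)$ is one. The standard device for reducing derivations to monoid morphisms is the square-zero extension (as in Theorem \ref{thm:SderSalg}): a derivation $\partial:\Sym C \to M$ becomes the monoid morphism $\langle 1,\partial\rangle:\Sym C \to \Sym C \oplus M$, and monoid morphisms out of $\Sym C$ \emph{are} determined by restriction along $\eta^{\Sym}_C$. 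But that device needs finite biproducts, and the hypotheses of the theorem as stated give only enrichment in commutative monoids (the paper's ``additive'' does not entail biproducts), reflexive coequalizers, and the bare existence of $\Sym$ as a left adjoint; in particular there is no graded or polynomial presentation of $\Sym C$ on which to run a ``determined on generators'' induction. What your parenthetical appeal to K\"ahler differentials is really invoking is that $\dt^{\Sym}_C$ is the \emph{universal} derivation of $\Sym C$ (equivalently $\Omega_{\Sym C} \cong \Sym C \otimes C$), and that is a substantive theorem of the framework of \cite{blo} --- established there under blanket assumptions that include finite biproducts (cf.\ the remark in the proof of Proposition \ref{thm:alg_mod}) --- not a formal consequence of freeness. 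To close the gap you must either add finite biproducts to the hypotheses and run the square-zero argument (your two derivations then yield two monoid morphisms into $\Sym C \oplus (SC \otimes C)$ agreeing after $\eta^{\Sym}_C$, hence equal by freeness), or prove or properly cite the universality of $\dt^{\Sym}$ in the stated generality.
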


It is important to note that this theorem gives codifferential structure in the original sense \cite{differentialCats}, not in the sense used in \cite{integralAndCalcCats}.  In particular, the above theorem gives codifferential structure satisfying the first four axioms of Definition \ref{defn:codiff}, but not necessarily the last axiom (interchange).  Hence, we will use the following corollary of this result:

\begin{corollary}\label{cor:blo}
To give a codifferential structure in the sense used in \cite{integralAndCalcCats} is equivalently to give structure as in Theorem \ref{thm:blo} such that the transformation $\dt$ also satisfies the interchange axiom \textnormal{\textbf{[d.5]}}: $\dt(\dt \otimes 1) = \dt (\dt \otimes 1)(1 \otimes \sigma)$.  
\end{corollary}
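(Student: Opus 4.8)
The plan is to deduce this directly from Theorem \ref{thm:blo} by tracking a single piece of data through the correspondence that theorem provides. First I would unwind the two notions of codifferential category at play. By Definition \ref{defn:codiff}, a codifferential category in the sense used in \cite{integralAndCalcCats} is an additive symmetric monoidal category with an algebra modality, equipped with a deriving transformation satisfying all five axioms \textbf{[d.1]}--\textbf{[d.5]}, whereas (as the footnote to \textbf{[d.5]} records) the original notion from \cite{differentialCats} asks only for \textbf{[d.1]}--\textbf{[d.4]}. Thus, tautologically, a codifferential structure in the sense of \cite{integralAndCalcCats} is exactly a codifferential structure in the sense of \cite{differentialCats} that in addition satisfies the interchange axiom \textbf{[d.5]}.

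Next I would invoke Theorem \ref{thm:blo}, which, under the standing hypotheses on $\C$, identifies codifferential structures in the sense of \cite{differentialCats} with triples $(\SSS,\lambda,\dt)$ consisting of a monad, a monad morphism $\lambda:\Sym \to \SSS$, and a natural transformation $\dt$, subject to conditions (a) and (b). The crucial observation is that under this identification the natural transformation $\dt$ is carried across unchanged: the algebra modality $(\SSS,\m,\mathsf{u})$ is merely repackaged as the pair $(\SSS,\lambda)$ via the bijection of Proposition \ref{thm:alg_mod}, and this repackaging does not touch $\dt$. Hence one and the same natural transformation $\dt$ plays the role of the deriving transformation on the codifferential side and of the third datum on the triple side.

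Finally I would conclude by noting that the interchange axiom \textbf{[d.5]} is a condition on $\dt$ alone (together with the fixed symmetry $\sigma$), involving neither $\lambda$ nor the monoid structure $\m,\mathsf{u}$. Consequently \textbf{[d.5]} holds for a codifferential structure in the sense of \cite{differentialCats} if and only if it holds for the corresponding triple $(\SSS,\lambda,\dt)$, since in both cases it is literally the same equation on the same $\dt$. Adjoining this condition to both sides of the bijection furnished by Theorem \ref{thm:blo} therefore yields the desired equivalence. The only point requiring any care --- and the ``main obstacle,'' such as it is --- is verifying that the correspondence of Theorem \ref{thm:blo} preserves $\dt$ on the nose; but this is immediate from the shape of that theorem, in which $\dt$ appears explicitly as a shared piece of data on both sides.
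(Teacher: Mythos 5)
Your proposal is correct and matches the paper's (implicit) argument: the paper states this corollary without a separate proof, treating it as immediate from the preceding remark that Theorem \ref{thm:blo} delivers exactly axioms \textbf{[d.1]}--\textbf{[d.4]}, so that adjoining \textbf{[d.5]} --- a condition on the shared datum $\dt$ alone --- to both sides of the correspondence gives the five-axiom notion. Your only addition is to spell out explicitly that the repackaging of $(\SSS,\m,\mathsf{u})$ as $(\SSS,\lambda)$ via Proposition \ref{thm:alg_mod} leaves $\dt$ untouched, which is precisely the point the paper leaves tacit.
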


In \ref{rem:sinfty_alg_mod_mnd_mor} we have already equipped $\SSS^\infty$ with a monad morphism $\lambda:\Sym \rightarrow \SSS^\infty$.  We will define the deriving transformation first for the finitely presentable objects, i.e., the finite-dimensional vector spaces $\RR^n$, and then we will use Proposition \ref{prop:equiv_fin} and Example \ref{exa:rvec2} both to extend this definition to arbitrary vector spaces and to facilitate the checking of the required axioms for a deriving transformation.

\begin{definition}\label{defn:dFlat}
For each $n \in \mathbb{N}$, define $\dt_{\RR^n}^{\flat}: \cInf(\RR^n) \to \cInf(\RR^n) \otimes \RR^n$ by
	\[\dt_{\RR^n}^{\flat}(f) :=\sum_{i=1}^n \frac{\partial f}{\partial x_i} \otimes e_i\]
where $e_i$ denotes the $i$-th standard basis vector for $\RR^n$.
\end{definition}

\begin{remark}\label{rem:1forms}
Note that $C^\infty(\RR^n) \otimes \RR^n \cong (C^\infty(\RR^n))^n$ is a free, finitely-generated $C^\infty(\RR^n)$-module of rank $n$ and hence may be identified with the $C^\infty(\RR^n)$-module of smooth 1-forms on $\RR^n$, whereupon the basis elements $1 \otimes e_i$ of this free module $C^\infty(\RR^n) \otimes \RR^n$ are identified with the basic 1-forms $\mathsf{d}x_i$ $(i = 1,...,n)$ on $\RR^n$ (noting that then $\mathsf{d}^\flat_{\RR^n}(x_i) = \mathsf{d}x_i$ if one writes $x_i:\RR^n \rightarrow \RR$ for the $i$-th projection).

In particular, each element $\omega \in C^\infty(\RR^n) \otimes \RR^n$ can be expressed uniquely as 
\[\omega = \sum_{i=1}^n f_i \otimes e_i = \sum_{i=1}^n f_i \mathsf{d}x_i\] 
for smooth functions $f_i:\RR^n \rightarrow \RR$.  Since $(C^\infty(\RR^n))^n \cong C^\infty(\RR^n,\RR^n)$, each such 1-form $\omega \in C^\infty(\RR^n) \otimes \RR^n$ corresponds to a smooth vector field $F = \langle f_1,...,f_n\rangle:\RR^n \rightarrow \RR^n$ on $\RR^n$.

Given $f \in C^\infty(\RR^n)$, the 1-form $\mathsf{d}^\flat_{\RR^n}(f)$ defined in Definition \ref{defn:dFlat} is the usual differential of $f$ (also known as the exterior derivative of the 0-form $f$), whose corresponding vector field is the \textbf{gradient} of $f$
$$\nabla f = \left\langle \frac{\partial f}{\partial x_1},...,\frac{\partial f}{\partial x_n}\right\rangle\;:\;\RR^n \longrightarrow \RR^n.$$
\end{remark}

\begin{lemma}\label{lemma:dNatural}
The maps $\dt_{\RR^n}^{\flat}$ in Definition \ref{defn:dFlat} constitute a natural transformation $$\dt^\flat\;:\;C^\infty(-) \Longrightarrow C^\infty(-)\otimes(-)\;:\;\Lin_\RR \longrightarrow \rVec\;.$$
\end{lemma}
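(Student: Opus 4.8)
The plan is to verify directly that the naturality square commutes for an arbitrary $\RR$-linear map $h:\RR^n \to \RR^m$. Writing composition diagrammatically (as in the paper), I must show
$$\dt^\flat_{\RR^n}\,(C^\infty(h)\otimes h) \;=\; C^\infty(h)\,\dt^\flat_{\RR^m}$$
as maps $C^\infty(\RR^n) \to C^\infty(\RR^m)\otimes\RR^m$. Both sides are $\RR$-linear, so it suffices to evaluate them on an arbitrary $g \in C^\infty(\RR^n)$. Throughout I will use the explicit description of the functor $C^\infty$ from Proposition \ref{thm:charn_cinfty}, namely $C^\infty(h)(g) = g\circ h^*$, together with the matrix conventions $h(e_k) = \sum_{j=1}^m h_{jk} e'_j$ fixed there, where $e_1,\dots,e_n$ and $e'_1,\dots,e'_m$ are the standard bases of $\RR^n$ and $\RR^m$ respectively.

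First I would compute the left-hand leg: applying $\dt^\flat_{\RR^n}$ to $g$ gives $\sum_{k=1}^n \frac{\partial g}{\partial x_k}\otimes e_k$, and then $C^\infty(h)\otimes h$ carries this to $\sum_{k=1}^n \big(\frac{\partial g}{\partial x_k}\circ h^*\big)\otimes h(e_k)$. For the right-hand leg I first form $C^\infty(h)(g) = g\circ h^*$ and then differentiate, obtaining $\sum_{j=1}^m \frac{\partial (g\circ h^*)}{\partial y_j}\otimes e'_j$, where $y_1,\dots,y_m$ denote the coordinates on $\RR^m$. The core computation is the multivariable chain rule: since $(h^*(y))_k = \sum_{i} h_{ik} y_i$ is linear with $\frac{\partial (h^*(y))_k}{\partial y_j} = h_{jk}$, the chain rule yields $\frac{\partial(g\circ h^*)}{\partial y_j} = \sum_{k=1}^n \big(\frac{\partial g}{\partial x_k}\circ h^*\big)\,h_{jk}$.

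Substituting this into the right-hand leg and interchanging the order of summation lets me factor the tensor as $\sum_{k=1}^n \big(\frac{\partial g}{\partial x_k}\circ h^*\big)\otimes\big(\sum_{j=1}^m h_{jk} e'_j\big)$, and recognizing $\sum_{j} h_{jk} e'_j = h(e_k)$ identifies this expression with the left-hand leg computed above. This completes the verification of naturality.

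The step I expect to be the main obstacle --- or at least the conceptual crux --- is the bookkeeping around the transpose: the functor $C^\infty$ acts on $h$ by precomposition with $h^*$, yet the target functor $C^\infty(-)\otimes(-)$ pairs against $h$ itself, and it is precisely the chain-rule factor $h_{jk}$ that converts the one into the other. Conceptually, the whole statement is the familiar naturality of the exterior derivative under pullback: under the identification of $C^\infty(\RR^n)\otimes\RR^n$ with smooth $1$-forms from Remark \ref{rem:1forms}, $\dt^\flat$ is the exterior derivative and $C^\infty(h)\otimes h$ is the pullback of $1$-forms along the linear map $h^*$, so the square is exactly the identity $d(g\circ h^*) = (h^*)^{*}(dg)$. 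I would nonetheless keep the explicit chain-rule computation as the actual proof, since it is self-contained and matches the coordinatewise definition of $\dt^\flat$, while possibly remarking on the $1$-form interpretation for intuition.
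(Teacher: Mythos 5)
Your proposal is correct and follows essentially the same route as the paper's proof: both verify the naturality square directly on an arbitrary $g \in C^\infty(\RR^n)$, using Proposition \ref{thm:charn_cinfty} to write $C^\infty(h)(g) = g \circ h^*$, applying the multivariable chain rule with the matrix entries of $h^*$, and regrouping the sum by bilinearity to recognize $\sum_j h_{jk} e'_j = h(e_k)$. The closing remark identifying the square with naturality of the exterior derivative under pullback is a nice conceptual addition consistent with Remark \ref{rem:1forms}, but the computational core matches the paper's argument exactly.
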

\begin{proof}
For this, we need to show that for any linear map $h: \RR^n \to \RR^m$,
\[
	\xymatrix{\cInf(\RR^n) \ar[r]^{\dt^{\flat}_{\RR^n}} \ar[d]_{\cInf(h)} & \cInf(\RR^n) \otimes \RR^n \ar[d]^{\cInf(h) \otimes h} \\ \cInf(\RR^m) \ar[r]_{\dt^{\flat}_{\RR^m}} & \cInf(\RR^m) \otimes \RR^m}
\]
commutes. Let $(h_{ij})$ be the matrix representation of $h$, let $h^*$ denote the adjoint (or transpose) of $h$, and let $(e_i)_{i=1}^m$ and $(e_j)_{j=1}^n$ denote the standard bases of $\RR^m$ and $\RR^n$, respectively.    Then for $f \in \cInf(\RR^n)$,
\begin{eqnarray*}
&   & \dt^{\flat}_{\RR^m} (\cInf(h)(f)) \\
& = & \dt^{\flat}_{\RR^m} (f \circ h^*)\;\;\;\;\;\;\text{(by Proposition \ref{thm:charn_cinfty})} \\
& = & \sum_{i=1}^m \frac{\di (f \circ h^*)}{\di x_i} \otimes e_i \\
& = & \sum_{i=1}^m \left[ \sum_{j=1}^n \left( \frac{\di f}{\di x_j} \circ h^* \right) \frac{\di h^*_j}{\di x_i} \right] \otimes e_i \mbox{ (by the chain rule)} \\
& = & \sum_{i=1}^m \left[ \sum_{j=1}^n \left( \frac{\di f}{\di x_j} \circ h^* \right) h_{ij} \right] \otimes e_i \mbox{ (by the matrix representation of $h^*$)} \\
& = & \sum_{j=1}^n \left( \frac{\di f}{\di x_j} \circ h^* \right)  \otimes \left( \sum_{i=1}^m h_{ij} e_i \right) \mbox{ (by bilinearity of $\otimes$) } \\
& = & \sum_{j=1}^n \left( \frac{\di f}{\di x_j} \circ h^* \right)  \otimes h(e_j) \mbox{ (by the matrix representation of $h$) } \\
& = & (\cInf(h) \otimes h)\left( \sum_{j=1}^n \frac{ \di f}{\di x_j} \otimes e_j \right) \\
& = & (\cInf(h) \otimes h)\left( \dt^{\flat}_{\RR^n}(f) \right)
\end{eqnarray*}
as required.  
\end{proof}

\begin{lemma}\label{thm:lem_sinfty_tensor_id}\emptybox
\begin{enumerate}
\item The functor $S^\infty(-) \otimes (-):\rVec \rightarrow \rVec$ is finitary.
\item The restriction of $S^\infty(-)\otimes (-)$ to $\Lin_\RR$ is precisely $C^\infty(-)\otimes (-)$.
\item $S^\infty(-) \otimes (-)$ is a left Kan extension of $C^\infty(-) \otimes (-):\Lin_\RR \rightarrow \rVec$ along the inclusion $\iota:\Lin_\RR \hookrightarrow \rVec$.
\end{enumerate}
\end{lemma}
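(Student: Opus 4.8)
The plan is to establish the three parts in order, deriving each from the general results of Section \ref{sec:fin_alg} together with the identifications of Remark \ref{rem:cinf_sinf}.

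For part (1), I would apply Proposition \ref{thm:tens_pr_fin} with $F = S^\infty$, $G = \mathrm{id}_{\rVec}$, and $\D = \C = \rVec$. The monad $\SSS^\infty$ is finitary by construction, so $S^\infty$ is a finitary functor, and the identity functor is trivially finitary. The one hypothesis that requires a word of justification is that the tensor product $\otimes$ on $\rVec$ preserves filtered colimits in each variable; this holds because for a fixed vector space $W$, the functors $(-) \otimes W$ and $W \otimes (-)$ are left adjoints (to the internal-hom functors), hence cocontinuous, and in particular preserve filtered colimits. Proposition \ref{thm:tens_pr_fin} then yields that the point-wise tensor product $S^\infty(-) \otimes (-) = F(-) \otimes G(-)$ is finitary.

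For part (2), I would simply unwind the definition of restriction along $\iota$. On objects, the restriction sends $\RR^n$ to $S^\infty(\RR^n) \otimes \RR^n$, which equals $C^\infty(\RR^n) \otimes \RR^n$ by the identification $S^\infty(\RR^n) = C^\infty(\RR^n)$ of Remark \ref{rem:cinf_sinf}. On a morphism $h$ of $\Lin_\RR$, the restriction acts by $S^\infty(h) \otimes h$, and since $C^\infty = S^\infty\iota$ by definition (Remark \ref{rem:cinf_sinf}) we have $S^\infty(h) = C^\infty(h)$, so this is $C^\infty(h) \otimes h$. Hence the restriction is exactly the functor $C^\infty(-) \otimes (-):\Lin_\RR \rightarrow \rVec$.

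Finally, part (3) is an immediate consequence of parts (1) and (2) together with the last sentence of Proposition \ref{prop:equiv_fin} (applied via the equivalence of Example \ref{exa:rvec2}): a finitary functor $\rVec \rightarrow \rVec$ is a left Kan extension of its own restriction along $\iota$. Since $S^\infty(-) \otimes (-)$ is finitary by (1) and its restriction is $C^\infty(-) \otimes (-)$ by (2), it follows that $S^\infty(-) \otimes (-) \cong \Lan_\iota\bigl(C^\infty(-) \otimes (-)\bigr)$, as claimed. I do not anticipate a genuine obstacle here: the lemma is essentially a bookkeeping assembly of Propositions \ref{thm:tens_pr_fin} and \ref{prop:equiv_fin}. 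The only point needing real care is the verification that $\otimes$ preserves filtered colimits in each variable on $\rVec$, which is where part (1) genuinely invokes special structure of $\rVec$ beyond the abstract machinery.
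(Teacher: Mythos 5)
Your proposal is correct and follows essentially the same route as the paper's proof: part (2) from the on-the-nose identification $S^\infty\iota = C^\infty$ of Remark \ref{rem:cinf_sinf}, part (1) from Proposition \ref{thm:tens_pr_fin} applied to the finitary functors $S^\infty$ and $1_{\rVec}$, and part (3) from Proposition \ref{prop:equiv_fin} via Example \ref{exa:rvec2}. Your explicit check that $\otimes$ preserves filtered colimits in each variable (via $(-)\otimes W$ being a left adjoint) is a hypothesis the paper leaves tacit, so including it is a welcome but inessential refinement.
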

\begin{proof}
(2) follows from the fact that the restriction $S^\infty\iota$ of $S^\infty$ to $\Lin_\RR$ is precisely $C^\infty$ (on the nose, by \ref{rem:cinf_sinf}).  Since $S^\infty$ and $1_{\rVec}$ are finitary, we deduce by Proposition \ref{thm:tens_pr_fin} that (1) holds, and (3) then follows, by Example \ref{exa:rvec2} and Proposition \ref{prop:equiv_fin}.
\end{proof}

\begin{definition}\label{def:smooth_der_transf}
Using Lemma \ref{thm:lem_sinfty_tensor_id}, we define
$$\dt\;:\;S^\infty(-) \Longrightarrow S^\infty(-) \otimes (-)\;:\;\rVec \rightarrow \rVec$$
to be the natural transformation
$$\dt = \Lan_\iota(\dt^\flat) \;:\;\Lan_\iota(C^\infty) \Longrightarrow \Lan_\iota(C^\infty(-)\otimes (-))$$
corresponding to $\dt^\flat$ under the equivalence $\Lan_\iota:[\Lin_\RR,\rVec] \rightarrow \Fin(\rVec,\rVec)$ of Example \ref{exa:rvec2} and Proposition \ref{prop:equiv_fin}. In view of Lemma \ref{thm:lem_sinfty_tensor_id}.(2), we note that $\iota^*(\dt) = \dt^\flat$ in the notation of Example \ref{exa:rvec2}, i.e.,
$$\dt_{\RR^n} = \dt^\flat_{\RR^n}\;\;\;\;\;\;(n \in \NN).$$
\end{definition}

\begin{lemma}\label{lemma:conditionA}
$\dt$ satisfies (a) of Theorem \ref{thm:blo} for the objects $C = \mathbb{R}^n$ $(n \in \NN)$; that is,
\begin{equation}\label{eq:diag_condA}
	\xymatrix{\RR[x_1 \ldots x_n] \ar[r]^{\lambda_{\RR^n}} \ar[d]_{\dt^{\Sym}_{\RR^n}} & \cInf(\RR^n) \ar[d]^-{\dt_{\RR^n}} \\ \RR[x_1 \ldots x_n] \otimes \RR^n \ar[r]_-{\lambda_{\RR^n} \otimes 1} & \cInf(\RR^n) \otimes \RR^n}
\end{equation}
commutes.
\end{lemma}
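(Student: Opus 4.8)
The plan is to verify the commutativity of \eqref{eq:diag_condA} by a direct computation, exploiting the fact that every map in the diagram is $\RR$-linear. Since $\dt^\Sym_{\RR^n}$, $\dt_{\RR^n}$, and $\lambda_{\RR^n}$ are all $\RR$-linear, and the monomials $x_1^{n_1}\cdots x_n^{n_n}$ form an $\RR$-linear basis of $\RR[x_1,\ldots,x_n]$, it suffices to check that the two composites around the square agree on an arbitrary monomial $p$.

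First I would unwind the explicit descriptions of the three maps involved. By Remark \ref{rem:sinfty_alg_mod_mnd_mor}, $\lambda_{\RR^n}$ is the inclusion $\RR[x_1,\ldots,x_n] \hookrightarrow \cInf(\RR^n)$ carrying each formal variable $x_i$ to the $i$-th projection $\pi_i$. By Example \ref{ex:rVecDiff}, $\dt^\Sym_{\RR^n}$ acts by $p \mapsto \sum_{i=1}^n \frac{\di p}{\di x_i} \otimes e_i$, where $\frac{\di p}{\di x_i}$ is the \emph{formal} partial derivative of the polynomial $p$. Finally, by Definition \ref{def:smooth_der_transf} together with Definition \ref{defn:dFlat}, the right-hand vertical map $\dt_{\RR^n}$ equals $\dt^\flat_{\RR^n}$ and so sends a smooth function $f$ to $\sum_{i=1}^n \frac{\di f}{\di x_i} \otimes e_i$, where now $\frac{\di f}{\di x_i}$ denotes the \emph{analytic} partial derivative.

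The whole statement then rests on a single elementary fact from calculus: for a polynomial, the analytic partial derivative of the associated polynomial \emph{function} is given by the same formula as the formal partial derivative of the polynomial. In our notation this reads $\frac{\di}{\di x_i}\big(\lambda_{\RR^n}(p)\big) = \lambda_{\RR^n}\!\big(\frac{\di p}{\di x_i}\big)$ for each $i$. Substituting this into the formula for $\dt^\flat_{\RR^n}$ immediately turns $\dt_{\RR^n}(\lambda_{\RR^n}(p)) = \sum_{i=1}^n \frac{\di}{\di x_i}(\lambda_{\RR^n}(p)) \otimes e_i$ into $\sum_{i=1}^n \lambda_{\RR^n}\!\big(\frac{\di p}{\di x_i}\big) \otimes e_i$, which is exactly $(\lambda_{\RR^n} \otimes 1)(\dt^\Sym_{\RR^n}(p))$, the other composite around the square.

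I do not anticipate any genuine obstacle here: once the three maps are made explicit, the content collapses to the identification of formal and analytic derivatives of polynomials. The only point requiring attention is careful bookkeeping — ensuring that the formal derivatives appearing in $\dt^\Sym$ and the analytic derivatives appearing in $\dt^\flat$ are correctly matched across $\lambda_{\RR^n}$, and that the second tensor factors $e_i \in \RR^n$ coincide in both expressions.
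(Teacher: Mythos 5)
Your proof is correct and follows essentially the same route as the paper's: both identify $\lambda_{\RR^n}$ as the inclusion (Remark \ref{rem:sinfty_alg_mod_mnd_mor}) and reduce the claim to the fact that the analytic partial derivatives appearing in $\dt^\flat_{\RR^n}$, when applied to a polynomial, reproduce the formal partial derivatives defining $\dt^{\Sym}_{\RR^n}$. The paper merely states this in one line, whereas you spell out the linearity reduction to monomials and the bookkeeping explicitly; the content is identical.
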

\begin{proof}
This is immediate since by \ref{rem:sinfty_alg_mod_mnd_mor}, $\lambda_{\RR^n}$ is the inclusion, and when the formula for $\dt_{\RR^n} = \dt^{\flat}_{\RR^n}$ is applied to a polynomial, we recover the formula for $\dt^{\Sym}_{\RR^n}$ (see Example \ref{ex:rVecDiff}).
\end{proof}

\begin{corollary}\label{thm:cor_cond_a}
$\dt$ satisfies (a) of Theorem \ref{thm:blo}.
\end{corollary}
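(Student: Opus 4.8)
The plan is to promote the objectwise verification of Lemma \ref{lemma:conditionA} to all of $\rVec$ by exploiting finitariness. Condition (a) asserts the equality of the two composite natural transformations
\[
\lambda\,\dt\,,\qquad \dt^\Sym(\lambda \otimes 1)\;:\;\Sym \Longrightarrow S^\infty(-) \otimes (-)\;:\;\rVec \longrightarrow \rVec\,,
\]
where $\lambda\,\dt$ is $\lambda$ followed by $\dt$, and $\dt^\Sym(\lambda\otimes 1)$ is the canonical deriving transformation on $\Sym$ followed by $\lambda\otimes 1$. Both the source functor $\Sym$ and the target functor $S^\infty(-)\otimes(-)$ are finitary: the former because $\Sym$ is a finitary monad (as noted following \eqref{eqn:ralg_and_cinfty_ring}), and the latter by Lemma \ref{thm:lem_sinfty_tensor_id}.(1). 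Hence both composites are elements of the hom-set $\Fin(\rVec,\rVec)(\Sym,\,S^\infty(-)\otimes(-))$.

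By Proposition \ref{prop:equiv_fin} together with Example \ref{exa:rvec2}, restriction along $\iota:\Lin_\RR \hookrightarrow \rVec$ furnishes an equivalence $\iota^*:\Fin(\rVec,\rVec)\xrightarrow{\sim}[\Lin_\RR,\rVec]$, which is in particular faithful; consequently two natural transformations between finitary endofunctors of $\rVec$ coincide as soon as their restrictions to $\Lin_\RR$ agree, i.e.\ as soon as they agree on the objects $\RR^n$ $(n \in \NN)$. First I would record that $\iota^*(\lambda\,\dt)$ is the top-then-right composite of the square \eqref{eq:diag_condA} and that $\iota^*\big(\dt^\Sym(\lambda\otimes 1)\big)$ is its left-then-bottom composite; this uses that $\iota^*(\dt) = \dt^\flat$ (Definition \ref{def:smooth_der_transf}), that $\lambda_{\RR^n}$ is the stated inclusion (\ref{rem:sinfty_alg_mod_mnd_mor}), and that $\dt^\Sym$ restricts to the canonical deriving transformation on $\Sym$. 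Lemma \ref{lemma:conditionA} then gives that these two restrictions coincide, and the faithfulness of $\iota^*$ yields the equality of $\lambda\,\dt$ and $\dt^\Sym(\lambda\otimes 1)$ on every object $C$, which is exactly condition (a).

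The argument is short and I do not expect a serious obstacle; the one point requiring care is the bookkeeping needed to confirm that the two sides of (a) genuinely are natural transformations between the \emph{same} pair of finitary functors $\Sym$ and $S^\infty(-)\otimes(-)$ (so that Proposition \ref{prop:equiv_fin} applies verbatim), together with the availability of finitariness of the \emph{source} $\Sym$ rather than just the target. Both facts are immediate from the results already assembled above, so the corollary follows.
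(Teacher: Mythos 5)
Your proposal is correct and follows essentially the same route as the paper's proof: both reduce condition (a) to the equality of the natural transformations $\lambda\dt$ and $\dt^\Sym(\lambda\otimes 1)$ between the finitary functors $\Sym$ and $S^\infty(-)\otimes(-)$, identify their restrictions to $\Lin_\RR$ with the two composites of the square in Lemma \ref{lemma:conditionA}, and conclude via the (faithful) equivalence $\iota^*:\Fin(\rVec,\rVec)\to[\Lin_\RR,\rVec]$ of Example \ref{exa:rvec2} and Proposition \ref{prop:equiv_fin}. Your added bookkeeping about why both sides live in the same hom-set is a slightly more explicit rendering of what the paper leaves implicit, not a different argument.
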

\begin{proof}
\ref{thm:blo}.(a) requires that
$$\lambda\dt = \dt^\Sym(\lambda \otimes 1)\;:\;\Sym \Longrightarrow S^\infty(-) \otimes (-)\;:\;\rVec \longrightarrow \rVec\;.$$
The components at $\RR^n$ of these two natural transformations $\lambda\dt$ and $\dt^\Sym(\lambda \otimes 1)$ are precisely the two composites in \eqref{eq:diag_condA}, so since $\Sym$ and $S^\infty(-) \otimes (-)$ are finitary functors and $\iota^*:\Fin(\rVec,\rVec) \rightarrow [\Lin_\RR,\rVec]$ is an equivalence (Example \ref{exa:rvec2}), the result follows.
\end{proof}

\begin{lemma}\label{lemma:chain}
$\dt$ satisfies the chain rule for the objects $\mathbb{R}^n$; that is, the following diagram commutes:
\[
	\xymatrix{ S^\infty(\cInf(\RR^n)) \ar[rr]^{\mu} \ar[d]_-{\dt} & & \cInf(\RR^n) \ar[d]^-{\dt} \\ S^\infty(\cInf(\RR^n)) \otimes \cInf(\RR^n) \ar[r]_-{\mu \otimes \dt} & \cInf(\RR^n) \otimes \cInf(\RR^n) \otimes \RR^n \ar[r]_-{m \otimes 1} & \cInf(\RR^n) \otimes \RR^n}
\]
\end{lemma}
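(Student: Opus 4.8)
The plan is to verify the chain rule directly on the finite-dimensional objects $\RR^n$ by unwinding both composites in the diagram and checking they agree when applied to an element $H \in S^\infty(C^\infty(\RR^n))$. Since $S^\infty(C^\infty(\RR^n))$ is the underlying space of the free $C^\infty$-ring on the vector space $C^\infty(\RR^n)$, and since both paths around the diagram are $\RR$-linear, it suffices by linearity and density to check the identity on a spanning set. The natural choice is to evaluate on elements of the form $\eta(g)$ for $g \in C^\infty(\RR^n)$ together with products thereof, exploiting that $\eta$-images generate the free algebra and that $\m$ allows us to reduce to such generators; alternatively, one evaluates on a typical element $\Phi_\psi(\eta(g_1),\dots,\eta(g_k))$ presenting a general element of the free $C^\infty$-ring, where $\psi \in C^\infty(\RR^k)$.

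First I would trace the top-right composite $\mu\,\dt$. Here $\mu_{\RR^n}:S^\infty(C^\infty(\RR^n)) \to C^\infty(\RR^n)$ is the algebra-structure (multiplication) map of the free $C^\infty$-ring, and $\dt_{\RR^n} = \dt^\flat_{\RR^n}$ is the gradient-type formula from Definition \ref{defn:dFlat}, so this path takes $H$, multiplies it down to a single smooth function $\mu(H) \in C^\infty(\RR^n)$, and then applies $\sum_i \tfrac{\partial}{\partial x_i}(-) \otimes e_i$. Next I would trace the bottom composite $\dt\,(\mu \otimes \dt)(\m \otimes 1)$. The key is that the inner $\dt$ applied to $H \in S^\infty(C^\infty(\RR^n))$ lands in $S^\infty(C^\infty(\RR^n)) \otimes C^\infty(\RR^n)$, whose second tensor factor records the ``formal variables'' $g \in C^\infty(\RR^n)$ over which differentiation is being performed; then $\mu \otimes \dt$ applies the structure map to the first factor and $\dt^\flat_{\RR^n}$ to the second factor, and $\m \otimes 1$ multiplies the two resulting $C^\infty(\RR^n)$-components. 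I expect this to reproduce the multivariable chain rule $\tfrac{\partial}{\partial x_i}(\psi(g_1,\dots,g_k)) = \sum_{\ell} (\partial_\ell \psi)(g_1,\dots,g_k)\,\tfrac{\partial g_\ell}{\partial x_i}$, matching the top composite.

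The cleanest route, however, is likely not a brute-force evaluation but to reduce the statement to naturality plus the already-established behaviour of $\dt$ on generators. I would set up the computation so that the inner $\dt$ on $S^\infty(C^\infty(\RR^n))$ is itself governed by $\dt^\flat$ on the indexing spaces via the left-Kan-extension presentation of $S^\infty$ from Remark \ref{rem:cinf_sinf}, so that $\dt$ at $C^\infty(\RR^n)$ is determined by its components $\dt^\flat_{\RR^k}$ on the finite-dimensional spaces $\RR^k$ mapping in via $\phi:\RR^k \to C^\infty(\RR^n)$. One then invokes naturality of $\dt^\flat$ (Lemma \ref{lemma:dNatural}) together with the explicit description of $\mu$ on free $C^\infty$-rings to identify both composites. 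The main obstacle I anticipate is precisely this bookkeeping at the object $C^\infty(\RR^n)$: unlike the base cases $\RR^n$, here $\dt$ is evaluated at an infinite-dimensional space, so one must either carefully use the colimit presentation of $S^\infty(C^\infty(\RR^n))$ to pull the computation back to finite-dimensional pieces, or work with a presentation $\Phi_\psi(\eta(g_1),\dots,\eta(g_k))$ and track how $\dt$ acts on such $C^\infty$-ring operations. The substantive analytic content — that the formula $\dt^\flat$ genuinely encodes partial differentiation and hence obeys the chain rule — is exactly the classical multivariable chain rule already used in the proof of Lemma \ref{lemma:dNatural}, so the real work is organizational rather than computational.
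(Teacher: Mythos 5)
Your proposal is correct and its preferred route is essentially the paper's own proof: the paper uses the colimit presentation of $S^\infty(C^\infty(\RR^n))$ from Remark \ref{rem:cinf_sinf} to reduce to checking the diagram after precomposition with $S^\infty(\phi)$ for each linear $\phi:\RR^m \rightarrow C^\infty(\RR^n)$, identifies $S^\infty(\phi)\mu$ as the $C^\infty$-ring morphism $\phi^\#$ given by $\phi^\#(g) = g \circ \langle \phi(e_1),\dots,\phi(e_m)\rangle$, moves $S^\infty(\phi)$ past the left-hand $\dt$ by naturality, and concludes with the classical multivariable chain rule, exactly as you outline. The one small imprecision is your citation of Lemma \ref{lemma:dNatural} for the naturality step: what is actually needed is naturality of the Kan-extended $\dt$ on all of $\rVec$ (automatic from Definition \ref{def:smooth_der_transf}), since it is invoked at the morphism $\phi$ whose codomain $C^\infty(\RR^n)$ lies outside $\Lin_\RR$.
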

\begin{proof}
By \ref{rem:cinf_sinf}, we know that the maps
$$S^\infty(\phi)\;:\;S^\infty(\RR^m) = C^\infty(\RR^m) \longrightarrow S^\infty(C^\infty(\RR^n))\;,\;\;\;\;\;\;(\RR^m,\phi) \in \Lin_\RR\slash C^\infty(\RR^n)\;,$$
present $S^\infty(C^\infty(\RR^n))$ as a colimit in $\rVec$.  Hence, to check the commutativity of the diagram above, it suffices to let $\phi: \RR^m \to \cInf(\RR^n)$ be a linear map and check that the diagram commutes when pre-composed by $S^\infty(\phi)$.  So, we will first consider the upper-right composite:
\begin{equation}\label{eq:comp} 
\xymatrix{ \cInf(\RR^m) \ar[r]^(.4){S^\infty(\phi)} &  S^\infty(\cInf(\RR^n)) \ar[r]^{\mu} & \cInf(\RR^n) \ar[r]^{\dt} & \cInf(\RR^n) \otimes \RR^n}
\end{equation}
Since $C^\infty(\RR^m) = S^\infty(\RR^m)$ is the free $\SSS^\infty$-algebra on the vector space $\RR^m$, we deduce by generalities on Eilenberg-Moore categories that the composite $S^\infty(\phi)\mu$ of the first two morphisms in \eqref{eq:comp} is the unique $\SSS^\infty$-algebra homomorphism $\phi^\#:C^\infty(\RR^m) \rightarrow C^\infty(\RR^n)$ such that $\eta \phi^\# = \phi$.  Hence, by Proposition \ref{thm:free_cinfty} and \ref{par:free_cinfty_on_fin_set} we deduce that $\phi^\# = S^\infty(\phi)\mu$ is given by
$$\phi^\#(g) = \Phi_g(\phi(e_1),...,\phi(e_m)) = g \circ \langle \phi(e_1),...,\phi(e_m)\rangle \;\;\;\;\;(g \in C^\infty(\RR^m))\;.$$
Hence, letting $\alpha_i = \phi(e_i)$ for each $i = 1,...,m$ and letting $\alpha = \langle \alpha_1,...,\alpha_m\rangle:\RR^n \rightarrow \RR^m$, we know that $\phi^\#(g) = g \circ \alpha$.  Therefore
	\[ (S^\infty(\phi) \mu \dt)(g) = (\phi^\#\dt)(g) = \dt(g \circ \alpha) = \sum_{i=1}^n \frac{\di (g \circ \alpha)}{\di x_i} \otimes e_i \ (\dagger). \]

We now calculate the lower-left composite when pre-composed by $S^\infty(\phi)$.  By the naturality of $\dt$, $S^\infty(\phi) \dt = \dt (S^\infty(\phi) \otimes \phi)$.  Also, $\phi^\# = S^\infty(\phi) \mu$, so we are considering the composite
	\[ (\dt)(\phi^\# \otimes \phi)(1 \otimes \dt)(m \otimes 1): \cInf(\RR^m) \to \cInf(\RR^n) \otimes \RR^n. \]
We now calculate the result of applying this composite to each $g \in \cInf(\RR^m)$.  
\begin{itemize}
	\item Applying $\dt$ to $g$ gives 
		\[ \sum_{j=1}^m \frac{\di g}{\di x_j} \otimes e_j. \]
	\item As above, $\phi^\#(g) = g \circ \alpha$, so applying $\phi^\# \otimes \phi$ to this gives
		\[ \sum_{j=1}^m \left( \frac{\di g}{\di x_j} \circ \alpha \right) \otimes \alpha_j. \]
	\item Applying $1 \otimes \dt$ to this gives
		\[ \sum_{j=1}^m \left( \frac{\di g}{\di x_j} \circ \alpha \right) \otimes \sum_{i=1}^n \frac{\di \alpha_j}{\di x_i} \otimes e_i = \sum_{j=1}^m \sum_{i=1}^n \left( \frac{\di g}{\di x_j} \circ \alpha \right) \otimes \frac{\di \alpha_j}{\di x_i} \otimes e_i \]
	\item And then applying $\m \otimes 1$ gives 
		\[ \sum_{j=1}^m \sum_{i=1}^n \left( \frac{\di g}{\di x_j} \circ \alpha \right) \frac{\di \alpha_j}{\di x_i} \otimes e_i = \sum_{i=1}^n \left[ \sum_{j=1}^m \left( \frac{\di g}{\di x_j} \circ \alpha \right) \frac{\di \alpha_j}{\di x_i} \right] \otimes e_i \]
\end{itemize}
However, by the ordinary chain rule for smooth functions, this last expression equals $\dagger$, as required.  
\end{proof}

\begin{corollary}\label{thm:cor_chain_rule}
$\dt$ satisfies the chain rule ([d.4] in Definition \ref{defn:codiff}).
\end{corollary}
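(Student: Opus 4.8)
The plan is to reduce the chain rule—an identity of natural transformations between endofunctors of $\rVec$—to the identity already established on the finite-dimensional spaces $\RR^n$ in Lemma \ref{lemma:chain}, in exactly the same spirit as the proof of Corollary \ref{thm:cor_cond_a}. Spelling out axiom \textbf{[d.4]}, the chain rule asserts the equality
$$\mu\dt = \dt(\mu \otimes \dt)(\m \otimes 1)$$
of two natural transformations sharing the source functor $S^\infty S^\infty$ and the target functor $S^\infty(-) \otimes (-)$, both regarded as endofunctors of $\rVec$.

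First I would verify that both of these functors are finitary. The source $S^\infty S^\infty$ is the composite of the finitary functor $S^\infty$ with itself; since finitary functors are precisely those preserving filtered colimits, any composite of finitary functors again preserves filtered colimits, so $S^\infty S^\infty$ is finitary. The target $S^\infty(-) \otimes (-)$ is finitary by Lemma \ref{thm:lem_sinfty_tensor_id}.(1). Granting this, the equivalence $\iota^*:\Fin(\rVec,\rVec) \to [\Lin_\RR,\rVec]$ of Example \ref{exa:rvec2} and Proposition \ref{prop:equiv_fin}, being fully faithful, identifies natural transformations between finitary endofunctors with natural transformations between their restrictions along $\iota$. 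Hence two natural transformations $S^\infty S^\infty \Rightarrow S^\infty(-)\otimes(-)$ coincide if and only if their components at the objects $\RR^n$ agree for every $n \in \NN$.

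It then remains only to recognize that the components at $\RR^n$ of $\mu\dt$ and of $\dt(\mu \otimes \dt)(\m \otimes 1)$ are precisely the upper-right and lower-left composites in the diagram of Lemma \ref{lemma:chain} (taking $C = \RR^n$, so that $S^\infty C = C^\infty(\RR^n)$), and that lemma shows these composites to be equal; the chain rule follows. I do not anticipate a genuine obstacle, since Lemma \ref{lemma:chain} already carried out the substantive computation via the ordinary chain rule for smooth functions. The only point demanding care is confirming that both the source $S^\infty S^\infty$ and the target $S^\infty(-)\otimes(-)$ lie in $\Fin(\rVec,\rVec)$, so that $\iota^*$ applies and legitimately reduces the general identity to its restriction to the finite-dimensional spaces.
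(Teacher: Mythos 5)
Your proposal is correct and follows essentially the same route as the paper's own proof: the paper likewise observes that the chain rule is an equality of a parallel pair of natural transformations $S^\infty(S^\infty(-)) \Rightarrow S^\infty(-)\otimes(-)$ between finitary endofunctors on $\rVec$, and then invokes the equivalence of Example \ref{exa:rvec2} (as in Corollary \ref{thm:cor_cond_a}) to reduce the identity to its components at the spaces $\RR^n$, which is Lemma \ref{lemma:chain}. Your explicit check that $S^\infty S^\infty$ is finitary (as a composite of finitary functors) is a detail the paper leaves implicit, and it is correct.
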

\begin{proof}
This follows from Lemma \ref{lemma:chain} by an argument as in Corollary \ref{thm:cor_cond_a}, using Example \ref{exa:rvec2}, since the chain rule is the equality of a parallel pair of natural transformations $S^\infty(S^\infty(-)) \Rightarrow S^\infty(-)\otimes(-)$ between finitary endofunctors on $\rVec$.
\end{proof}

\begin{lemma}\label{lemma:interchange}
$\dt$ satisfies the interchange rule for the objects $\mathbb{R}^n$; that is, the following diagram commutes:
\[
	\xymatrix{ \cInfRn \ar[rr]^{\dt} \ar[d]_{\dt} & & \cInfRn\otimes \RR^n \ar[d]^{\dt \otimes 1} \\ \cInfRn \otimes \RR^n \ar[r]_{\dt \otimes 1} & \cInf(\RR^n) \otimes \RR^n \otimes \RR^n \ar[r]_{1 \otimes \sigma } & \cInfRn \otimes \RR^n \otimes \RR^n}
\]
\end{lemma}
\begin{proof}
Given some $f \in \cInfRn$, by definition,
	\[\dt_{\RR^n}(f) =\sum_{i=1}^n \frac{\partial f}{\partial x_i} \otimes e_i\;.\] 
Then applying $\dt \otimes 1$ to this, we get
	\[ \sum_{i=1}^n \sum_{j=1}^n \frac{\partial^2 f}{\partial x_j \partial x_i} \otimes e_j  \otimes e_i, (\dagger) \]
and applying $1 \otimes \sigma$ to this gives
	\[ \sum_{i=1}^n \sum_{j=1}^n \frac{\partial^2 f}{\partial x_j \partial x_i} \otimes e_i \otimes e_j = \sum_{i=1}^n \sum_{j=1}^n \frac{\partial^2 f}{\partial x_i \partial x_j} \otimes e_j \otimes e_i. \]
But this is equal to $\dagger$, by the symmetry of mixed partial derivatives.  
\end{proof}

\begin{corollary}\label{thm:interchange}
$\dt$ satisfies the interchange rule ([d.5] in Definition \ref{defn:codiff}).
\end{corollary}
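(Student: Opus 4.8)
The plan is to follow exactly the pattern already used for Corollaries \ref{thm:cor_cond_a} and \ref{thm:cor_chain_rule}: to observe that the interchange axiom \textbf{[d.5]} is an equality of a parallel pair of natural transformations between \emph{finitary} endofunctors on $\rVec$, so that by the equivalence $\iota^*:\Fin(\rVec,\rVec) \xrightarrow{\sim} [\Lin_\RR,\rVec]$ of Example \ref{exa:rvec2} and Proposition \ref{prop:equiv_fin} it suffices to check the equality after restricting along $\iota$, i.e.\ on the objects $\RR^n$ $(n \in \NN)$ --- which is precisely the content of Lemma \ref{lemma:interchange}.

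First I would identify the two functors involved. Both composites $\dt(\dt \otimes 1)$ and $\dt(\dt \otimes 1)(1 \otimes \sigma)$ are natural transformations of type $S^\infty(-) \Longrightarrow S^\infty(-) \otimes (-) \otimes (-)$. The domain functor $S^\infty$ is finitary by construction. For the codomain, I would note that $S^\infty(-)\otimes(-)\otimes(-) = \bigl(S^\infty(-)\otimes(-)\bigr)\otimes(-)$ is the point-wise tensor product of the finitary functor $S^\infty(-)\otimes(-)$ (Lemma \ref{thm:lem_sinfty_tensor_id}.(1)) with the identity functor $1_{\rVec}$, and hence is finitary by Proposition \ref{thm:tens_pr_fin}.

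Since both $\dt(\dt \otimes 1)$ and $\dt(\dt \otimes 1)(1 \otimes \sigma)$ are then natural transformations between finitary functors, and $\iota^*$ is an equivalence (hence in particular faithful), they are equal as soon as their images under $\iota^*$ agree; but these images are exactly the families of components at the spaces $\RR^n$, and Lemma \ref{lemma:interchange} establishes that the relevant diagram commutes for each $\RR^n$. This yields the desired equality $\dt(\dt \otimes 1) = \dt(\dt \otimes 1)(1 \otimes \sigma)$.

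I do not expect any genuine obstacle here: the only point requiring care is confirming that the common codomain functor $S^\infty(-)\otimes(-)\otimes(-)$ is finitary, so that $\iota^*$ applies (and reflects equality) for both natural transformations; this is immediate from Proposition \ref{thm:tens_pr_fin} once one recalls from Lemma \ref{thm:lem_sinfty_tensor_id} that $S^\infty(-)\otimes(-)$ is already finitary. All the analytic substance --- the symmetry of mixed partial derivatives --- has already been absorbed into Lemma \ref{lemma:interchange}, so at this stage the argument is purely formal.
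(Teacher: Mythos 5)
Your proposal is correct and follows essentially the same route as the paper's own proof: the paper likewise notes that $S^\infty(-)\otimes(-)\otimes(-)$ is finitary with restriction $C^\infty(-)\otimes(-)\otimes(-)$ (by an argument as in Lemma \ref{thm:lem_sinfty_tensor_id}) and then invokes Lemma \ref{lemma:interchange} together with the equivalence $\iota^*$ exactly as in Corollaries \ref{thm:cor_cond_a} and \ref{thm:cor_chain_rule}. Your write-up merely makes explicit the appeal to Proposition \ref{thm:tens_pr_fin} and the faithfulness of $\iota^*$, which the paper leaves implicit.
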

\begin{proof}
By an argument as in Lemma \ref{thm:lem_sinfty_tensor_id}
, $S^\infty(-) \otimes (-) \otimes (-)$ is a finitary endofunctor on $\rVec$ whose restriction to $\Lin_\RR$ is precisely $C^\infty(-) \otimes (-) \otimes (-)$.  The result now follows from Lemma \ref{lemma:interchange} by an argument as in Corollaries \ref{thm:cor_cond_a} and \ref{thm:cor_chain_rule}.
\end{proof}

\begin{theorem}
$\rVec$ has the structure of a codifferential category when equipped with the free $C^\infty$-ring monad $\SSS^\infty$.
\end{theorem}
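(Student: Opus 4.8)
The plan is to recognize that the real work has already been distributed across the preceding lemmas and corollaries, so that the theorem amounts to assembling those pieces through Corollary \ref{cor:blo}. First I would check that the hypotheses of Corollary \ref{cor:blo} (equivalently, of Theorem \ref{thm:blo}) are satisfied for $\C = \rVec$: namely, that $\rVec$ is an additive symmetric monoidal category with reflexive coequalizers preserved by $\otimes$ in each variable, and that the symmetric algebra monad $\Sym$ exists on $\rVec$. All of these are standard: $\rVec$ is cocomplete and so has reflexive coequalizers, its tensor product is exact and hence preserves colimits in each variable, and $\Sym$ is the symmetric algebra monad of Example \ref{exa:sym_on_rvec}.

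Having confirmed the hypotheses, I would exhibit the three pieces of data required by Theorem \ref{thm:blo}. The monad is the free $C^\infty$-ring monad $\SSS^\infty$ constructed in Section \ref{sec:cinfty_modality}; the monad morphism $\lambda:\Sym \Rightarrow \SSS^\infty$ is the one recorded in Remark \ref{rem:sinfty_alg_mod_mnd_mor}; and the natural transformation $\dt:S^\infty(-) \Rightarrow S^\infty(-) \otimes (-)$ is the deriving transformation of Definition \ref{def:smooth_der_transf}, obtained by left Kan extension of the pointwise gradient $\dt^\flat$ from $\Lin_\RR$.

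The remaining task is to verify the three conditions that Corollary \ref{cor:blo} demands: the compatibility square (a) with $\lambda$, the chain rule [d.4], and the interchange axiom [d.5]. Each of these has already been established: condition (a) in Corollary \ref{thm:cor_cond_a}, the chain rule in Corollary \ref{thm:cor_chain_rule}, and interchange in Corollary \ref{thm:interchange}. By Corollary \ref{cor:blo}, this data equips $\rVec$ with codifferential structure in the sense of \cite{integralAndCalcCats}, which is exactly the claim.

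Finally, I would note that there is no genuinely hard step remaining at the level of the theorem itself: the conceptual obstacle was verifying the codifferential axioms on arbitrary, possibly infinite-dimensional, vector spaces, and this is entirely circumvented by the finitary-monad machinery. The key device is Proposition \ref{prop:equiv_fin} together with Example \ref{exa:rvec2}: since $S^\infty(-)$ and the relevant tensor-power functors are finitary, the restriction functor $\iota^*:\Fin(\rVec,\rVec) \to [\Lin_\RR,\rVec]$ is an equivalence, so two parallel finitary natural transformations coincide as soon as their restrictions to $\Lin_\RR$ agree. It therefore suffices to check each axiom on the finite-dimensional spaces $\RR^n$, where $\dt_{\RR^n} = \dt^\flat_{\RR^n}$ is ordinary gradient differentiation; there the chain rule and interchange reduce to the classical chain rule for smooth functions and to the equality of mixed partial derivatives, while compatibility with $\lambda$ reduces to the observation that $\dt^\flat$ restricts to polynomial differentiation. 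Moreover, Corollary \ref{cor:blo} supplies axioms [d.1]--[d.3] automatically from the structure theorem of \cite{blo}, so no direct verification of those is needed.
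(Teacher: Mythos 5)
Your proposal is correct and matches the paper's own proof, which likewise cites Corollary \ref{cor:blo} together with Remark \ref{rem:sinfty_alg_mod_mnd_mor}, Definition \ref{def:smooth_der_transf}, and Corollaries \ref{thm:cor_cond_a}, \ref{thm:cor_chain_rule}, and \ref{thm:interchange}. Your additional checks (the hypotheses on $\rVec$ and the explanation of why the finitary machinery reduces everything to the spaces $\RR^n$) are sound elaborations of what the paper leaves implicit, not a different route.
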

\begin{proof}
In view of Corollary \ref{cor:blo}, this follows from Remark \ref{rem:sinfty_alg_mod_mnd_mor}, Definition \ref{def:smooth_der_transf}, and Corollaries \ref{thm:cor_cond_a}, \ref{thm:cor_chain_rule}, and \ref{thm:interchange}.
\end{proof}

\begin{remark}\label{remark:storage}
Recall that an algebra modality is said to have the \emph{storage} or \emph{Seely} isomorphisms if certain canonical morphisms $S(X) \otimes S(Y) \rightarrow S(X \times Y)$ and $k \rightarrow S(1)$ are isomorphisms\footnote{See \cite[Definition 7.1]{blute2018differential}, where the dual notion is defined.}.  The algebra modality considered here does not have this property, as even for $X = Y = \RR$ the canonical map $\cInf(\RR) \otimes \cInf(\RR) \rightarrow \cInf(\RR \times \RR) = \cInf(\RR^2)$ is not surjective, as its image does not contain\footnote{Example from math.stackexchange.com page \#2244402.} the function $e^{xy}$.  As noted in the introduction, this is then a crucial example of a (co)differential category that does not have the Seely isomorphisms.  
\end{remark}

\setcounter{subsection}{0}
\subsection{$\mathsf{S}^\infty$-Derivations}\label{derivationsec}

The algebraic notions of derivation and K{\"a}hler differential generalize to the setting of codifferential categories, through the notion of $\mathsf{S}$-derivation introduced by Blute, Lucyshyn-Wright, and O'Neill \cite{blo}.  Here, we prove that the $\mathsf{S}^\infty$-derivations for the codifferential category given by the free $C^\infty$-ring modality correspond precisely to derivations relative to the Fermat theory of smooth functions in the sense of Dubuc and Kock  \cite{dubucKock1Form}.  This thus demonstrates the importance of the general notion of $\mathsf{S}$-derivation defined in \cite{blo}, and provides a key link between differential categories and previous work in categorical differential geometry.  

For an algebra modality $(\SSS,\mathsf{m},\mathsf{u})$ on a symmetric monoidal category, every $\mathsf{S}$-algebra comes equipped with a commutative monoid structure \cite[Theorem 2.12]{blo}. Indeed, if $(A, \nu)$ is an $\mathsf{S}$-algebra for the monad $\SSS = (S,\mu,\eta)$ (where we recall that $\nu: S(A) \to A$ is a morphism satisfying certain equations involving $\eta$ and $\mu$), we define a commutative monoid structure on $A$ with multiplication $\mathsf{m}^\nu: A \otimes A \to A$ and unit $\mathsf{u}^\nu: k \to A$ defined respectively as follows: 
\[\mathsf{m}^\nu := (\eta_A \otimes \eta_A)\mathsf{m}_A \nu \quad \quad \quad \mathsf{u}^\nu := \mathsf{u}_A \nu \] 
Notice that for free $\SSS$-algebras $(S(C), \mu_C)$, $\mathsf{m}^{\mu_C}= \mathsf{m}_C$ and $\mathsf{u}^{\mu_C} = \mathsf{u}_C$. We may now also consider modules over an $\SSS$-algebra $(A, \nu)$, or rather modules over the commutative monoid $(A, \mathsf{m}^\nu, \mathsf{u}^\nu)$, which we recall are pairs $(M, \alpha)$ consisting of an object $M$ and a morphism $\alpha: A \otimes M \to M$ satisfying the standard coherences. 

\begin{definition_sub}\label{defn:Sderivation}
Let $\C$ be a codifferential category with algebra modality $(\SSS,\mathsf{m},\mathsf{u})$ and deriving transformation $\mathsf{d}$. Given an $\SSS$-algebra $(A, \nu)$ and an $(A, \mathsf{m}^\nu, \mathsf{u}^\nu)$-module $(M, \alpha)$, an \textbf{$\SSS$-derivation} \cite[Definition 4.12]{blo} is a morphism $\partial: A \to M$ such that the following diagram commutes:
   \[  \xymatrixcolsep{5pc}\xymatrix{ S(A) \ar[d]_-{\mathsf{d}_A} \ar[rr]^-{\nu} && A \ar[d]^-{\partial} \\
   S(A) \otimes A \ar[r]_-{\nu \otimes \partial} & A \otimes M \ar[r]_-{\alpha} & M
  } \]
\end{definition_sub}

The canonical example of an $\SSS$-derivation is the deriving transformation \cite[Theorem 4.13]{blo}. Indeed for each object $C$, $\mathsf{d}_C$ is an $\SSS$-derivation on the $\SSS$-algebra $(S(C), \mu_C)$ valued in the module $(S(C) \otimes C, \mathsf{m}_C \otimes 1_C)$. $\SSS$-derivations are the appropriate generalization of the classical notion of derivation, as every $\SSS$-derivation is a derivation in the classical sense. The key difference is that classical derivations are axiomatized by the Leibniz rule, while $\SSS$-derivations are axiomatized by the chain rule. In the special case of the symmetric algebra monad, under the assumptions of \ref{thm:blo}, $\mathsf{Sym}$-derivations correspond precisely to derivations in the classical sense \cite[Remark 5.8]{blo}.
But what do $\mathsf{S}^\infty$-derivations correspond to? For this, we turn to Dubuc and Kock's generalized notion of derivation for Fermat theories. While we will not review Fermat theories in general (we invite the curious reader to learn about them in \cite{dubucKock1Form}), we will instead consider Dubuc and Kock's generalized derivations for the Fermat theory of smooth functions, which are explicitly described by Joyce in \cite{joyce2011introduction}.  

\begin{definition_sub}\label{defn:Cderivation}
Given a $\cInfRing$ $(A, \Phi)$ and an $A$-module $M$ (that is, $M$ is a module over the underlying ring structure of $A$), a \textbf{$C^\infty$-derivation} \cite{dubucKock1Form,joyce2011introduction} is a map $\mathsf{D}: A \to M$ such that for each smooth function $f: \mathbb{R}^n \to \mathbb{R}$, the following equality holds: 
\[\mathsf{D}\left(\Phi_f(a_1, \hdots, a_n)\right) = \sum\limits^n_{i = 1} \Phi_{\frac{\partial f}{\partial x_i}}(a_1, \hdots, a_n) \cdot \mathsf{D}(a_i)\]
for all $a_1, \hdots, a_n \in A$ (and where $\cdot$ is the $A$-module action). 
\end{definition_sub}

To see why $C^\infty$-derivations are precisely the same thing as $\mathsf{S}^\infty$-derivations, we will first take a look at equivalent definitions for each of these generalized derivations. In the presence of biproducts, arbitrary $\SSS$-derivations in a codifferential category can equivalently be described as certain $\SSS$-algebra morphisms \cite[Definition 4.7]{blo}. This generalizes a well-known result on derivations in commutative algebra.

\begin{theorem_sub}\label{thm:SderSalg}\textnormal{\cite[Theorem 4.1, Proposition 4.11]{blo}} 
Let $\C$ be a codifferential category with algebra modality $(\SSS,\mathsf{m},\mathsf{u})$ and deriving transformation $\mathsf{d}$.  Let $(A, \nu)$ be an $\SSS$-algebra and $(M, \alpha)$ an $(A, \mathsf{m}^\nu, \mathsf{u}^\nu)$-module, and suppose that $\C$ has finite biproducts $\oplus$. Then the pair $(A \oplus M, \beta)$ is an $\SSS$-algebra where $\beta:S(A \oplus M) \to A \oplus M$ is defined as
\[\beta := \langle S(\pi_1) \nu, \mathsf{d}_{A \oplus M} (S(\pi_1) \otimes \pi_2)(\nu \otimes 1) \alpha \rangle\]
where $\pi_1:A \oplus M \rightarrow A$ and $\pi_2:A \oplus M \rightarrow M$ are the projections.
Furthermore, a morphism $\partial: A \to M$ is an $\SSS$-derivation if and only if $\langle 1_A, \partial \rangle: (A, \nu) \to (A \oplus M, \beta)$ is an $\SSS$-algebra morphism. 
\end{theorem_sub}

More general statements regarding the equivalence between $\SSS$-derivations and $\SSS$-algebra morphisms can be found in \cite{blo}. In the case of the free $\cInfRing$ monad, $\mathsf{S}^\infty$-algebra morphisms correspond precisely to $\cInfRing$ morphisms. Therefore to give an $\mathsf{S}^\infty$-derivation $\partial: A \to M$ amounts to giving a $\cInfRing$ morphism $\langle 1_A, \partial \rangle: A \to A \oplus M$, where $A \oplus M$ carries the $C^\infty$-ring structure corresponding to the $\SSS^\infty$-algebra structure $\beta$ in Theorem \ref{thm:SderSalg}.  This is similar to a result for algebras of Fermat theories that had been given earlier by Kock and Dubuc, stated here for $C^\infty$-rings and $C^\infty$-derivations:

 \begin{theorem_sub}\label{thm:CderCmorph}\textnormal{\cite[Proposition 2.2]{dubucKock1Form}} 
Let $(A, \Phi)$ be a $\cInfRing$ and $M$ an $A$-module. Then $(A \oplus M, \tilde \Phi)$ is a $\cInfRing$ where  for each smooth function $f: \mathbb{R}^n \to \mathbb{R}$, $\tilde \Phi_f: (A\oplus M)^n \to A \oplus M$ is defined as follows: 
\[\tilde \Phi_f \left((a_1, m_1), \hdots, (a_n, m_n) \right) = \left( \Phi_f(a_1, \hdots, a_n),  \sum\limits^n_{i=1} \Phi_{\frac{\partial f}{\partial x_i}}(a_1, \hdots, a_n) \cdot m_i \right) \]
for all $a_1, \hdots, a_n \in A$ (and where $\cdot$ is the $A$-module action). Furthermore, a map $\mathsf{D}: A \to M$ is a $C^\infty$-derivation if and only if $\langle 1_A, \mathsf{D} \rangle: (A, \Phi) \to (A \oplus M, \tilde \Phi)$ is a $\cInfRing$ morphism. 
\end{theorem_sub}

We note that \cite[Proposition 2.2]{dubucKock1Form} is in fact a more general statement than what is stated in Theorem \ref{thm:CderCmorph}. It is not difficult to see that the $\cInfRing$ $(A \oplus M, \tilde \Phi)$ from Theorem \ref{thm:CderCmorph} corresponds precisely to the $\mathsf{S}^\infty$-algebra $(A \oplus M, \beta)$ from Theorem \ref{thm:SderSalg}. Therefore since $\SSS^\infty$-algebra morphisms are equivalently described as $\cInfRing$ morphisms, it follows that $\mathsf{S}^\infty$-derivations are equivalently described as $C^\infty$-derivations: 

\begin{theorem_sub} For the codifferential category structure on $\rVec$ induced by the free $\cInfRing$ monad $\mathsf{S}^\infty$, the following are in bijective correspondence: 
\begin{enumerate}[{\em (i)}]
\item $\mathsf{S}^\infty$-derivations (Definition \ref{defn:Sderivation});
\item $C^\infty$-derivations (Definition \ref{defn:Cderivation}). 
\end{enumerate}
\end{theorem_sub} 

An immediate consequence of this theorem is that universal $\mathsf{S}^\infty$-derivations correspond to universal $C^\infty$-derivations. For arbitrary codifferential categories, universal $\SSS$-derivations $\partial: A \to \Omega_A$  \cite[Definition 4.14]{blo} are generalizations of K{\"a}hler differentials, where in particular, $\Omega_A$ is the generalization of the classical module of K{\"a}hler differentials of a commutative algebra. Similarly, universal derivations of Fermat theories \cite[Theorem 2.3]{dubucKock1Form} provide a simultaneous generalization of both K{\"a}hler differentials and smooth $1$-forms. Indeed, it is well known that for a smooth manifold $M$, the module of K{\"a}hler differentials (in the classical sense) of $C^\infty(M)$ is not, in general, the module of smooth $1$-forms of $M$. We can explain this phenomenon in the following way: the module of K{\"a}hler differentials is the universal $\mathsf{Sym}$-derivation, and not the universal $C^\infty$-derivation. For $C^\infty(M)$, the universal $C^\infty$-derivation (equivalently, the universal $S^\infty$-derivation) is in fact the module of smooth $1$-forms of $M$ \cite{dubucKock1Form}.  Looking back at arbitrary codifferential categories, this justifies the use of the more general $\SSS$-derivations to study de Rham cohomology of $\SSS$-algebras \cite{o2017smoothness}. 

\subsection{A quasi-codereliction}\label{codersec}

In a codifferential category where $S(C)$ admits a natural bialgebra structure, the differential structure can equivalently be axiomatized by a \textbf{codereliction} \cite{blute2018differential, differentialCats}, which is in particular an $\SSS$-derivation. We will see that although $\mathsf{S}^\infty$ does not have this structure, it is still possible to construct a sort of `quasi-codereliction' that satisfies identities that are similar to the axioms of a codereliction.  

\begin{definition_sub} An algebra modality $\algMod$ on an additive symmetric monoidal category $(\C, \otimes, k, \sigma)$ is said to be an \textbf{additive bialgebra modality} \cite{blute2018differential} if it comes equipped with a natural transformation $\Delta$ with components $\Delta_C: S(C) \to S(C) \otimes S(C)$ and a natural transformation $\mathsf{e}$ with components $\mathsf{e}_C: S(C) \to k$, such that
\begin{itemize}
\item for each $C$ in $\C$, $(SC,\mathsf{m}_C, \mathsf{u}_C, \Delta_C, \mathsf{e}_C)$ is a commutative and cocommutative bimonoid (in the symmetric monoidal category $\C$);
\item the following equations are satisfied: 
\[ \eta \Delta = (\mathsf{u} \otimes \eta) + (\eta \otimes \mathsf{u}), \quad \quad \eta \mathsf{e} = 0;\]
\item for each pair of morphisms $f: A \to B$ and $g: A \to B$, the following equality holds: 
\[S(f+g) = \Delta_A \left( S(f) \otimes S(g) \right) \mathsf{m}_B; \]
\item for each zero morphism $0: A \to B$, the following equality holds: 
\[S(0)= \mathsf{e}_A \mathsf{u}_B.  \]
\end{itemize}
\end{definition_sub}

\begin{definition_sub} A \textbf{codereliction} \cite{blute2018differential, differentialCats} for an additive bialgebra modality is a natural transformation $\varepsilon: S \rightarrow 1_\C$ such that: 
\begin{enumerate}[{\bf [dc.1]}]
    \item \textbf{Constant rule}: $\mathsf{u}\varepsilon = 0$;
    \item \textbf{Leibniz/product rule}: $\m \varepsilon = (\mathsf{e} \otimes \varepsilon) + (\varepsilon \otimes \mathsf{e})$;
    \item \textbf{Derivative of a linear function}: $\eta \varepsilon = 1$;
    \item \textbf{Chain rule}: $\mu \Delta (1 \otimes \epsilon) = \Delta (\mu \otimes \epsilon)(1 \otimes \Delta)(\m \otimes \epsilon)$.  
\end{enumerate}
\end{definition_sub}

The intuition for coderelictions is best understood as evaluating derivatives at zero. For additive bialgebra modalities, there is a bijective correspondence between deriving transformations and coderelictions  \cite{blute2018differential}. Indeed, every codereliction induces a deriving transformation, defined by
  \[ \mathsf{d} := \Delta (1 \otimes \varepsilon)\;,   \]
and, conversely, every deriving transformation induces a codereliction:
\begin{equation}\label{coderdef}\begin{gathered} \varepsilon := \mathsf{d}(\mathsf{e} \otimes 1)\;. \end{gathered}\end{equation}
Therefore, note that the codereliction chain rule \textbf{[dc.4]} is then precisely the deriving transformation chain rule \textbf{[d.4]}. Post-composing both sides of the chain rule with $(\mathsf{e} \otimes 1)$, one obtains the following identity (called the alternative chain rule in \cite{blute2018differential})
$$\mu\varepsilon = \Delta(\mu \otimes\varepsilon)(\mathsf{e}\otimes\varepsilon)\;,$$
which we can rewrite equivalently as
\begin{equation}\label{altchain}\begin{gathered} \mu \varepsilon = \dt (\mu \otimes \varepsilon) (\mathsf{e} \otimes 1)\;. \end{gathered}\end{equation}
Now since $S(C)$ is a bialgebra, the morphism $\mathsf{e}_C \otimes 1_A: S(C) \otimes A \to A$ is a $(S(C), \mathsf{m}, \mathsf{u})$-module action for every object $A$. Therefore, we obtain the following observation: 

\begin{lemma_sub} For every object $C$, $\varepsilon_C: S(C) \to C$ is an $\SSS$-derivation on the free $\SSS$-algebra $(S(C), \mu_C)$ valued in the $(S(C), \mathsf{m}_C, \mathsf{u}_C)$-module $(C, \mathsf{e}_C \otimes 1_C)$.    
\end{lemma_sub}

In the presence of biproducts, additive bialgebra modalities are equivalently described as algebra modalities that have the Seely isomorphisms (see \cite{blute2018differential} for more details). Therefore, as mentioned in \ref{remark:storage}, the algebra modality $\mathsf{S}^\infty$ is not an additive bialgebra modality since it does not have the Seely isomorphisms; alternatively, one can directly argue that the vector spaces $C^\infty(\mathbb{R}^n)$ are not bialgebras since they do not have comultiplications. On the other hand to construct a codereliction from a deriving transformation as in \eqref{coderdef}, one only needs to have a counit, which $\mathsf{S}^\infty$ does have. Indeed, define the natural transformation $\mathsf{e}^\flat: C^\infty \Rightarrow \mathbb{R}$ by declaring that for each finite-dimensional vector space $\RR^n$, the map $\mathsf{e}^\flat_{\RR^n}:C^\infty(\mathbb{R}^n) \to \mathbb{R}$ is given by evaluation at zero:
\[\mathsf{e}^\flat_{\RR^n}(f)= f(\vec 0)\;.\]
One can check that $\mathsf{e}^\flat_{\RR^n}$ is also a $C^\infty$-ring morphism. Then define $\mathsf{e}:  S^\infty \Rightarrow \mathbb{R}$ as the image of $\mathsf{e}^\flat$ under the equivalence of Example \ref{exa:rvec2}, i.e., $\mathsf{e} = \Lan_\iota(\mathsf{e}^\flat)$ (recalling that $S^\infty = \Lan_\iota(C^\infty)$ and noting that the constant functor $\RR:\rVec \rightarrow \rVec$ is finitary and is a left Kan extension, along $\iota$, of the constant functor $\RR:\Lin_\RR \rightarrow \rVec$). We then define the natural transformation $\varepsilon: S^\infty \Rightarrow 1_{\rVec}$ in the same manner that a codereliction was defined in \eqref{coderdef}. Explicitly, $\varepsilon$ is defined component-wise as follows: 
   \[ \varepsilon_V :=  \xymatrixcolsep{5pc}\xymatrix{ S^\infty(V) \ar[r]^-{\mathsf{d}_V} & S^\infty(V) \otimes V \ar[r]^-{\mathsf{e}_V \otimes 1_V} & V 
  } \]
  In particular, $\varepsilon_{\RR^n}: C^\infty(\RR^n) \to \RR^n$ is the linear map that evaluates the derivative at zero: 
  \[ \varepsilon_{\RR^n}(f)= \sum_{i=1}^n \mathsf{e}^\flat_{\RR^n}\left( \frac{\partial f}{\partial x_i} \right) \otimes e_i = \sum_{i=1}^n \frac{\partial f}{\partial x_i} (\vec 0) e_i= \left( \frac{\partial f}{\partial x_1}(0), \hdots,  \frac{\partial f}{\partial x_n}(0) \right), \]
where the pure tensors in the second expression are taken in $\RR \otimes V = V$.  Now note that the first three codereliction axioms \textbf{[dc.1]}, \textbf{[dc.2]}, and \textbf{[dc.3]} involve the algebra modality structure and the counit $\mathsf{e}$, but not the comultiplication. As a consequence of the deriving transformation axioms, it follows that $\varepsilon$ satisfies \textbf{[dc.1]}, \textbf{[dc.2]}, and \textbf{[dc.3]}. The remaining axiom, the codereliction chain rule \textbf{[dc.4]}, involves the comultiplication, which we cannot express with $\mathsf{S}^\infty$. However by construction, $\varepsilon$ satisfies the alternative chain rule (\ref{altchain}), which in a sense replaces \textbf{[dc.4]}, and requires precisely that $\varepsilon$ be an $\mathsf{S}^\infty$-derivation. This makes $\varepsilon$ a sort of \textit{quasi}-codereliction for $\mathsf{S}^\infty$. We summarize this result as follows:  

\begin{proposition_sub} The natural transformation $\varepsilon: \mathsf{S}^\infty \Rightarrow 1_{\rVec}$ satisfies the following equalities:
\begin{enumerate}[{\em (i)}]
\item $\mathsf{u}\varepsilon = 0$;
\item $\m \varepsilon = (\mathsf{e} \otimes \varepsilon) + (\varepsilon \otimes \mathsf{e})$;
\item  $\eta \varepsilon = 1$;
\item $\mu \varepsilon = \dt (\mu \otimes \varepsilon) (\mathsf{e} \otimes 1)$.
\end{enumerate}
In particular, for every $\mathbb{R}$-vector space $V$, $\varepsilon_V: S^\infty(V) \to V$ is an $\mathsf{S}^\infty$-derivation (or equivalently a $C^\infty$-derivation) on the free $\mathsf{S}^\infty$-algebra $(S^\infty(V), \mu)$, valued in the $(S^\infty(V), \mathsf{m}_V, \mathsf{u}_V)$-module $(V, \mathsf{e}_V \otimes 1_V)$.    
\end{proposition_sub}

\section{Antiderivatives and integral structure} \label{sec:anti}

The goal of this section is to show that the codifferential category structure on $\rVec$ induced by the free $\cInfRing$ monad has antiderivatives, and that therefore we obtain a calculus category (and hence also an integral category). Explicitly, we wish to show that the natural transformation $\mathsf{K}: S^\infty \Rightarrow S^\infty$ (Definition \ref{def:K}) is a natural isomorphism.  However, the finitary functor $S^\infty:\rVec \rightarrow \rVec$ is a left Kan extension of its own restriction $C^\infty:\Lin_\RR \rightarrow \rVec$ (\ref{rem:cinf_sinf}).  Hence, in keeping with the strategy of Section \ref{sec:diff}, it suffices to show that the restriction $\mathsf{K}^\flat: C^\infty \Rightarrow C^\infty$ of $\mathsf{K}$ is an isomorphism, as $\mathsf{K}^\flat$ is the image of $\mathsf{K}$ under the equivalence $\Fin(\rVec,\rVec) \simeq [\Lin_\RR,\rVec]$ of Example \ref{exa:rvec2}.

Extending this notation, we shall write
$$\mathsf{L}^\flat,\;\mathsf{K}^\flat,\;\mathsf{J}^\flat\;:\;C^\infty \Rightarrow C^\infty$$
to denote the restrictions of the transformations $\mathsf{L},\mathsf{K},\mathsf{J}:S^\infty \Rightarrow S^\infty$ defined in Definition \ref{def:K}.

In order to show that $\mathsf{K}^\flat$ is an isomorphism, we begin by first taking a look at the coderiving transformation $\dt^\circ$ (Definition \ref{def:coderiving}) and its components $\dt^\circ_{\RR^n}$ for the finite-dimensional spaces $\RR^n$.  Recall that $\mathsf{d}^\circ_{\mathbb{R}^n}:C^\infty(\mathbb{R}^n) \otimes \mathbb{R}^n \to C^\infty(\mathbb{R}^n)$ is defined as follows: 
\[\mathsf{d}^\circ_{\mathbb{R}^n} =  (1_{C^\infty(\mathbb{R}^n)} \otimes \eta_{\mathbb{R}^n})\mathsf{m}_{\mathbb{R}^n} \]
where $\mathsf{m}_{\mathbb{R}^n}$ is the standard multiplication of $C^\infty(\mathbb{R}^n)$ and $\eta_{\mathbb{R}^n}: \mathbb{R}^n \to C^\infty(\mathbb{R}^n)$ is the linear map that sends the standard basis vectors $e_i \in \RR^n$ to the projection maps $\pi_i: \mathbb{R}^n \to \mathbb{R}$ (\ref{thm:free_cinfty}). Recalling from \ref{rem:1forms} that each element $\omega \in C^\infty(\RR^n) \otimes \RR^n$ can be expressed uniquely as a sum
 \[\omega = \sum\limits^n_{i=1} f_i \otimes e_i\;, \] 
with $f_1,...,f_n \in C^\infty(\RR^n)$, we compute that the resulting smooth function $\mathsf{d}^\circ_{\mathbb{R}^n}(\omega): \mathbb{R}^n \to \mathbb{R}$ is given by
 \begin{align*}
\mathsf{d}^\circ_{\mathbb{R}^n}(\omega)(\vec v) = \mathsf{d}^\circ_{\mathbb{R}^n}\left(\sum^n_{i=0} f_i \otimes e_i \right)(\vec v) = \sum^n_{i=0} f_i(\vec v)\pi_i (\vec v) = \sum^n_{i=0} f_i(\vec v)v_i = (f_1 (\vec v), \hdots, f_n (\vec v)) \cdot \vec v 
\end{align*}
where the symbol $\cdot$ on the right-hand side denotes the usual dot product.  Equivalently,
$$\mathsf{d}^\circ_{\mathbb{R}^n}(\omega)(\vec v) = F(\vec v) \cdot \vec v$$
where $F = \langle f_1,...,f_n \rangle:\RR^n \rightarrow \RR^n$ is the vector field corresponding to $\omega = \sum^n_{i=1} f_i \otimes e_i$ as discussed in \ref{rem:1forms}.

Now that we have computed the coderiving transformation $\dt^\circ$ for the spaces $\RR^n$, we can now explicitly describe the transformations $\mathsf{L}^\flat,\mathsf{K}^\flat,\mathsf{J}^\flat:C^\infty \Rightarrow C^\infty$.  Given a smooth function $f: \RR^n \to \RR$, we recall from \ref{rem:1forms} that the vector field corresponding to $\mathsf{d}_{\mathbb{R}^n}(f) = \sum_{i=1}^n \frac{\partial f}{\partial x_i}\otimes e_i$ is precisely the gradient $\nabla f = \left\langle \frac{\partial f}{\partial x_1},...,\frac{\partial f}{\partial x_n}\right\rangle:\RR^n \rightarrow \RR^n$ of $f$.  Using this, we obtain a simple expression for $\mathsf{L}^\flat_{\RR^n}(f) = \mathsf{L}_{\RR^n}(f): \RR^n \to \RR$:
\begin{align*}
\mathsf{L}^\flat_{\RR^n}(f)(\vec v) &= \mathsf{d}^\circ_{\mathbb{R}^n}(\mathsf{d}_{\mathbb{R}^n}(f))(\vec v) =   \nabla(f)(\vec v) \cdot \vec v
\end{align*}
By definition $\mathsf{K}^\flat_{\RR_n} = \mathsf{K}_{\RR_n} = \mathsf{L}_{\RR_n} + S^\infty(0)$ where $S^\infty(0) = C^\infty(0): C^\infty(\mathbb{R}^n) \to C^\infty(\mathbb{R}^n)$, and by Proposition \ref{thm:charn_cinfty} we deduce that $C^\infty(0)$ is given by simply evaluating at zero:
\[C^\infty(0)(f)(\vec v) = f(\vec 0)\;.\]
It is interesting to note that $C^\infty(0)= \mathsf{e}_{\RR^n} \mathsf{u}_{\RR^n}$, where $\mathsf{e}_{\RR^n}$ is the counit map defined in Section \ref{codersec}. Therefore, $\mathsf{K}_{\mathbb{R}^n}^\flat(f) = \mathsf{K}_{\RR^n}(f): \mathbb{R}^n \to \mathbb{R}$ is given by
\[\mathsf{K}_{\mathbb{R}^n}^\flat(f)(\vec v)=  \nabla(f)(\vec v) \cdot \vec v + f(\vec 0)\;.\] 
Lastly, we find that $\mathsf{J}_{\mathbb{R}^n}^\flat(f) = \mathsf{J}_{\mathbb{R}^n}(f): \mathbb{R}^n \to \mathbb{R}$ is given as follows:
\[\mathsf{J}_{\mathbb{R}^n}^\flat(f)(\vec v)=  \nabla(f)(\vec v) \cdot \vec v + f(\vec v)\;.\] 

We wish to show that $\mathsf{K}^\flat$ and $\mathsf{J}^\flat$ are natural isomorphisms. To do so, we need to make use of the \emph{Fundamental Theorem of Line Integration}, which relates the gradient and line integration. Recall that for any continuous map $F:\RR^n \rightarrow \RR^n$ and a curve $C$ parametrized by a given smooth path $r: [a,b] \to \RR^n$, the \textbf{line integral of $F$ along $C$} is defined as
\[\int \limits_C F\cdot \mathsf{d}r := \int \limits \limits^b_a F(r(t)) \cdot r^\prime (t) ~ \mathsf{d}t \;.\]
The \textbf{Fundamental Theorem of Line Integration} states that for every $C^1$ function $f: \RR^n \to \RR$ and any smooth path $r: [a,b] \to \RR^n$, we have the following equality: 
\[\int \limits_C \nabla f \cdot \dt r = f(r(t)) \Big|^b_a = f(r(b)) - f(r(a)) \]
Note that the Fundamental Theorem of Line Integration is a higher-dimensional generalization of the Second Fundamental Theorem of Calculus. 

Another basic tool that we will require is a compatibility relation between integration and differentiation called the \textbf{Leibniz integral rule}, to the effect that partial differentiation and integration commute when they act on independent variables: 
 \begin{align*}
 \frac{\partial}{\partial x} \int \limits^b_af(x, t) ~\mathsf{d}t = \int \limits^b_a\frac{\partial f(x, t)}{\partial x} ~\mathsf{d}t 
 \end{align*}
for any $C^1$ function $f:\RR^2 \rightarrow \RR$ and any constants $a,b \in \RR$, noting that this equation also holds under more general hypotheses, such as those in  \cite[\S 8.1, Thm. 1]{PrMo:IntCal}.  As a consequence, if $f:\RR^n \times \RR \rightarrow \RR$ is a $C^1$ function, which we shall write as a function $f(\vec x,t)$ of a vector variable $\vec x$ and a scalar variable $t$, then any pair of constants $a,b \in \RR$ determines a $C^1$ function $g(\vec x) = \int_a^b f(\vec x,t) ~ \mathsf{d}t$ of $\vec x$ whose gradient $\nabla g:\RR^n \rightarrow \RR^n$ is 
$$\nabla\left(\int\limits_a^b f(\vec x,t) ~ \mathsf{d}t\right) = \int\limits_a^b \nabla(f(\vec x,t)) ~ \mathsf{d} t\;,$$
where the right-hand side is an $\RR^n$-valued integral and is regarded as a function of $\vec x \in \RR^n$.  As will be our convention throughout the sequel, the gradient in each case is taken with respect to the variable $\vec x$.

\begin{proposition}\label{thm:kflat_nat_iso} $\mathsf{K}^\flat: C^\infty \Rightarrow C^\infty$ is a natural isomorphism. 
\end{proposition}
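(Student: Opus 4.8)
The plan is to exhibit an explicit inverse for each component $\mathsf{K}^\flat_{\RR^n}:C^\infty(\RR^n) \to C^\infty(\RR^n)$, using the Fundamental Theorem of Line Integration to invert the operator $f \mapsto \nabla f(\vec x) \cdot \vec x + f(\vec 0)$. The key observation is that the first-order operator $f \mapsto \nabla f(\vec v)\cdot\vec v$ is a radial (Euler) derivative: if we parametrize the ray from the origin to $\vec v$ by $r(t) = t\vec v$ for $t \in [0,1]$, then $r'(t) = \vec v$ and the chain rule gives $\frac{d}{dt}f(t\vec v) = \nabla f(t\vec v)\cdot \vec v$, so that $\nabla f(t\vec v)\cdot(t\vec v) = t\,\frac{d}{dt}f(t\vec v)$. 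This suggests that applying $\mathsf{K}^\flat_{\RR^n}$ amounts, along each ray, to the operator $t\frac{d}{dt} + (\text{evaluation at }t=0)$, which is invertible by an integrating-factor argument.

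Concretely, I would define a candidate inverse $\mathsf{P}_{\RR^n}:C^\infty(\RR^n)\to C^\infty(\RR^n)$ by a formula of the shape
\[
\mathsf{P}_{\RR^n}(g)(\vec v) \;=\; g(\vec 0) + \int_0^1 \frac{g(t\vec v) - g(\vec 0)}{t}\,\mathsf{d}t,
\]
the idea being that the integrand extends smoothly across $t=0$ (by Hadamard's lemma, $g(t\vec v)-g(\vec 0)$ vanishes to first order in $t$), so $\mathsf{P}_{\RR^n}(g)$ is again smooth; the Leibniz integral rule and its gradient corollary quoted above ensure that differentiation under the integral sign is legitimate and that $\mathsf{P}_{\RR^n}(g)$ is smooth in $\vec v$. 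I would then verify the two round-trip identities $\mathsf{K}^\flat_{\RR^n}\mathsf{P}_{\RR^n} = 1$ and $\mathsf{P}_{\RR^n}\mathsf{K}^\flat_{\RR^n} = 1$ by a direct computation: writing $\phi(t) = f(t\vec v)$ for fixed $\vec v$, both composites reduce to the one-variable identity $\phi(1) = \phi(0) + \int_0^1 \bigl(t\phi'(t)\bigr)\,\frac{\mathsf{d}t}{t}$-type manipulations, i.e.\ to the Fundamental Theorem of Line Integration applied along the ray $r(t)=t\vec v$, together with an integration by parts. Finally, since $\mathsf{K}^\flat$ and its proposed inverse $\mathsf{P}$ are, component-wise, mutually inverse, and $\mathsf{K}^\flat$ is already known to be natural, the family $\mathsf{P}$ is automatically natural as well, so $\mathsf{K}^\flat$ is an isomorphism in $[\Lin_\RR,\rVec]$.

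The main obstacle I anticipate is not the algebraic verification of the inverse identities, which should fall out of the Fundamental Theorem of Line Integration once the radial reparametrization is in place, but rather the analytic \emph{smoothness} of $\mathsf{P}_{\RR^n}(g)$ as a function of $\vec v$, including genuine smoothness across the origin $\vec v = \vec 0$ and across the ray parameter $t = 0$. This is exactly where Hadamard's lemma and the Leibniz integral rule (in the strengthened form cited from \cite[\S 8.1, Thm. 1]{PrMo:IntCal}) do the essential work: they guarantee that the apparently singular integrand $\bigl(g(t\vec v)-g(\vec 0)\bigr)/t$ is in fact smooth in $(t,\vec v)$ and that its partial derivatives in $\vec v$ may be computed by differentiating under the integral sign. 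A secondary point to check is that $\mathsf{P}_{\RR^n}$ is $\RR$-linear (which is immediate from the formula) so that it indeed defines a morphism in $\rVec$, and hence a candidate component of a natural transformation $C^\infty \Rightarrow C^\infty$.
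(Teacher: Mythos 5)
Your proposal is correct and takes essentially the same route as the paper: exhibit an explicit componentwise inverse built from radial line integrals, verify both round-trip identities via the Fundamental Theorem of Line Integration, and note that naturality of the inverse is automatic because $\mathsf{K}^\flat$ is already natural. In fact your candidate inverse is the \emph{same operator} as the paper's: the paper defines $\mathsf{K}^*_{\RR^n}(f)(\vec v) = \int_0^1\!\int_0^1 \nabla(f)(st\vec v)\cdot\vec v\;\mathsf{d}s\,\mathsf{d}t + f(\vec 0)$, and evaluating the inner integral by the substitution $u = st$ together with the Fundamental Theorem of Line Integration turns this into exactly your $f(\vec 0) + \int_0^1 \bigl(f(t\vec v)-f(\vec 0)\bigr)t^{-1}\,\mathsf{d}t$. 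The one genuine difference is where the analytic work lands: the paper's double-integral form has a manifestly smooth integrand, so smoothness of the inverse follows from the Leibniz integral rule alone (and the $1/t$ factors arising in its verification are handled by writing $\int_0^1 = \lim_{u\to 0^+}\int_u^1$), whereas your single-integral form has an apparent singularity at $t = 0$ and so needs Hadamard's lemma to see that the integrand is smooth in $(t,\vec v)$ --- a point you correctly identify and address. A minor quibble: no integration by parts is actually needed in the round-trip computations; with $\phi(t) = f(t\vec v)$ they reduce to $\mathsf{K}^\flat_{\RR^n}(f)(t\vec v) - f(\vec 0) = t\phi'(t)$, the fundamental theorem, and differentiation under the integral sign.
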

\begin{proof} For each $n$, define $\mathsf{K}^*_{\RR^n}: C^\infty(\RR^n) \to C^\infty(\RR^n)$ as follows:
\[\mathsf{K}^*_{\RR^n}(f)(\vec v)= \int \limits^1_0 \int \limits^1_0 \nabla(f)(st \vec v) \cdot \vec v ~\mathsf{d}s ~\mathsf{d}t + f(\vec 0)\;,\]
for each $f \in C^\infty(\RR^n)$ and $\vec v \in \RR^n$, noting that the resulting function $\mathsf{K}^*_{\RR^n}(f):\RR^n \rightarrow \RR$ is smooth, as a consequence of the Leibniz integral rule.  We shall use the Fundamental Theorem of Line Integration to show that $\mathsf{K}^*_{\RR^n}$ is an inverse of $\mathsf{K}^\flat_{\mathbb{R}^n} = \mathsf{K}_{\mathbb{R}^n}$. 

For each $\vec v \in \mathbb{R}^n$, define the smooth path $r_{\vec v}: [0,1] \to \mathbb{R}^n$  by
\[r_{\vec v}(t) =   t\vec v\;.\]
Note that $r_{\vec v}$ is a parametrization of the straight line between $\vec 0$ and $\vec v$, which we will denote as $C_{\vec v}$. The derivative of $r_{\vec v}$ is simply the constant function that maps everything to $\vec v$: $r^\prime_{\vec v}(t) = \vec v$. Now the Fundamental Theorem of Line Integration implies that for every smooth function $f: \mathbb{R}^n \to \mathbb{R}$, the following equality holds: 
\begin{equation}\label{FTL}\begin{gathered}  \int \limits^1_0 \nabla(f)(t \vec v) \cdot \vec v ~\mathsf{d}t =  \int \limits^1_0 \nabla(f)(r_{\vec v}(t)) \cdot r^\prime_{\vec v}(t) ~\mathsf{d}t =  \int \limits_{C_{\vec v}} \! \nabla f \cdot dr = f(r_{\vec v}(1)) - f(r_{\vec v}(0)) = f(\vec v) - f(\vec 0)\;. \end{gathered}\end{equation}
This is the key identity to the proof that $\mathsf{K}^\flat$ is an isomorphism. In fact, later we will see that the Second Fundamental Theorem of Calculus rule {\bf [c.1]} from the definition of a co-calculus category (Definition \ref{cocaldef}) is precisely this instance of the Fundamental Theorem of Line Integration. 

Now observe that for any smooth function $f:\RR^n \rightarrow \RR$, the following equality holds: 
\[\mathsf{K}_{\mathbb{R}^n}(f)(\vec 0)= \underbrace{\nabla(f)(\vec 0) \cdot \vec 0}_{=~0} + f(\vec 0) = f(\vec 0)\]
Using the Fundamental Theorem of Line Integration twice and playing with the bounds of the integral using limits, we show that $\mathsf{K}_{\mathbb{R}^n}\mathsf{K}^*_{\RR^n}=1$: 
\begin{align*}
\mathsf{K}^*_{\mathbb{R}^n}(\mathsf{K}_{\mathbb{R}^n}(f))(\vec v) &= \int \limits^1_0 \int \limits^1_0 \nabla(\mathsf{K}_{\mathbb{R}^n}(f))(st \vec v) \cdot v ~\mathsf{d}s ~\mathsf{d}t + \mathsf{K}_{\mathbb{R}^n}(f)(\vec 0) \\
&= \lim_{u \to 0^+} \int \limits^1_u \int \limits^1_0 \nabla(\mathsf{K}_{\mathbb{R}^n}(f))(st \vec v) \cdot v ~\mathsf{d}s ~\mathsf{d}t + f(\vec 0) \\
&= \lim_{u \to 0^+} \int \limits^1_u \frac{t}{t} \int \limits^1_0 \nabla(\mathsf{K}_{\mathbb{R}^n}(f))(st \vec v) \cdot \vec v ~\mathsf{d}s ~\mathsf{d}t + f(\vec 0) \\
&=  \lim_{u \to 0^+} \int \limits^1_u \frac{1}{t} \int \limits^1_0 \nabla(\mathsf{K}_{\mathbb{R}^n}(f))(st \vec v) \cdot t \vec v ~\mathsf{d}s ~\mathsf{d}t + f(\vec 0) \\
&=\lim_{u \to 0^+} \int \limits^1_u \frac{1}{t} \left(\mathsf{K}_{\mathbb{R}^n}(f)(t\vec v) - \mathsf{K}_{\mathbb{R}^n}(f)(\vec 0)\right) ~\mathsf{d}t + f(\vec 0)\\
&= \lim_{u \to 0^+} \int \limits^1_u \frac{1}{t} \left(\nabla(f)(t\vec v) \cdot t\vec v +f(\vec 0) - f(\vec 0) \right) ~\mathsf{d}t + f(\vec 0)\\
&= \lim_{u \to 0^+} \int \limits^1_u \frac{1}{t}~ \nabla(f)(t\vec v) \cdot t\vec v ~\mathsf{d}t + f(\vec 0)\\
&= \lim_{u \to 0^+} \int \limits^1_u\frac{t}{t}~ \nabla(f)(t\vec v) \cdot \vec v ~\mathsf{d}t + f(\vec 0)\\
&= \lim_{u \to 0^+} \int \limits^1_u \nabla(f)(t\vec v) \cdot \vec v ~\mathsf{d}t + f(\vec 0)\\
&= \int \limits^1_0 \nabla(f)(t\vec v) \cdot \vec v ~\mathsf{d}t + f(\vec 0)\\
&= f(\vec v) - f(\vec 0) + f(\vec 0) \\
&= f(\vec v).  
\end{align*}
To prove that $\mathsf{K}^*_{\mathbb{R}^n}\mathsf{K}_{\mathbb{R}^n}=1$, we will begin with a few preliminary observations. First, the following identities also hold for any smooth function $f:\RR^n \rightarrow \RR$:
\[\mathsf{K}^*_{\mathbb{R}^n}(f)(\vec 0)= \underbrace{\int \limits^1_0 \int \limits^1_0 \nabla(f)(st \vec 0) \cdot \vec 0 ~\mathsf{d}s ~\mathsf{d}t}_{=~0} + f(\vec 0)=f(\vec 0).  \]
Next, as a consequence of the gradient version of the chain rule, for any scalar $t \in \RR$ we have that
\[\nabla(f(t \vec x)) (\vec v) \cdot \vec w = \nabla(f)(t \vec v) \cdot t\vec w, \]
where the gradient on the left-hand side is taken with respect to the variable $\vec x$ and then explicitly evaluated at $\vec v$; we will use similar notation in the sequel.

The last observation we need is that the gradient interacts nicely with our line integral, as a consequence of the Leibniz integral rule and the chain rule:   
\begin{equation}\label{gradientLineCommutes}\nabla \left( \int \limits^1_0 f(t \vec x) ~\mathsf{d}t\right)(\vec v) \cdot \vec w = \int \limits^1_0 \nabla (f(t \vec x))(\vec v) \cdot \vec w~\mathsf{d}t= \int \limits^1_0 \nabla (f)(t \vec v) \cdot t \vec w~\mathsf{d}t
\end{equation}

With all these observations and using similar techniques from before, we can prove that $\mathsf{K}^*_{\mathbb{R}^n}\mathsf{K}_{\mathbb{R}^n}=1$: 
\begin{align*}
\mathsf{K}_{\mathbb{R}^n}(\mathsf{K}^*_{\mathbb{R}^n}(f))(\vec v) &= \nabla(\mathsf{K}^*_{\mathbb{R}^n}(f))(\vec v) \cdot \vec v + \mathsf{K}^*_{\mathbb{R}^n}(f)(\vec 0) \\
&= \nabla\left(\int \limits^1_0 \int \limits^1_0 \nabla(f)(st \vec x) \cdot \vec x ~\mathsf{d}s ~\mathsf{d}t + f(\vec 0) \right)(\vec v) \cdot \vec v + f(\vec 0) \\ 
&=  \nabla\left(\int \limits^1_0 \int \limits^1_0 \nabla(f)(st \vec x) \cdot \vec x ~\mathsf{d}s ~\mathsf{d}t \right)(\vec v) \cdot \vec v +  \underbrace{ \nabla(f(\vec 0))(\vec v) \cdot \vec v}_{=~0} + f(\vec 0) \\
&= \nabla\left(\lim_{u \to 0^+} \int \limits^1_u \int \limits^1_0 \nabla(f)(st \vec x) \cdot \vec x ~\mathsf{d}s ~\mathsf{d}t \right)(\vec v) \cdot \vec v + f(\vec 0) \\
&= \nabla\left(\lim_{u \to 0^+} \int \limits^1_u \frac{t}{t} \int \limits^1_x \nabla(f)(st \vec x) \cdot \vec x ~\mathsf{d}s ~\mathsf{d}t \right)(\vec v) \cdot \vec v + f(\vec 0) \\
&= \nabla\left(\lim_{u \to 0^+}\int \limits^1_u \frac{1}{t} \int \limits^1_x \nabla(f)(st \vec x) \cdot t \vec x ~\mathsf{d}s ~\mathsf{d}t \right)(\vec v) \cdot \vec v + f(\vec 0) \\
&= \nabla\left(\lim_{u \to 0^+} \int \limits^1_u \frac{1}{t} \left(f(t\vec x) - f(\vec 0) \right)  ~\mathsf{d}t \right)(\vec v) \cdot \vec v + f(\vec 0) \\
&= \nabla\left(\lim_{u \to 0^+} \int \limits^1_u \frac{1}{t} f(t\vec x) ~\mathsf{d}t \right)(\vec v) \cdot \vec v - \nabla\left(\lim_{u \to 0^+} \int \limits^1_u \frac{1}{t} f(\vec 0)  ~\mathsf{d}t \right)(\vec v) \cdot \vec v + f(\vec 0) \\
&= \lim_{u \to 0^+} \int \limits^1_u \frac{1}{t} \nabla(f(t\vec x))(\vec v) \cdot \vec v ~\mathsf{d}t - \lim_{u \to 0^+} \int \limits^1_u \frac{1}{t} \underbrace{\nabla(f(\vec 0))(\vec v) \cdot \vec v}_{=~0} ~\mathsf{d}t  + f(\vec 0) \\
&= \lim_{u \to 0^+} \int \limits^1_u \frac{1}{t} \nabla(f)(t\vec v) \cdot t\vec v ~\mathsf{d}t + f(\vec 0) \\
&= \lim_{u \to 0^+} \int \limits^1_u \frac{t}{t} \nabla(f)(t\vec v) \cdot \vec v ~\mathsf{d}t + f(\vec 0) \\
&= \lim_{u \to 0^+} \int \limits^1_u \nabla(f)(t\vec v) \cdot \vec v ~\mathsf{d}t + f(\vec 0) \\
&= \int \limits^1_0 \nabla(f)(t\vec v) \cdot \vec v ~\mathsf{d}t + f(\vec 0)\\
&= f(\vec v) - f(\vec 0) + f(\vec 0) \\
&= f(\vec v)
\end{align*}
Therefore we conclude that $\mathsf{K}^\flat: C^\infty \Rightarrow C^\infty$ is a natural isomorphism. 
\end{proof}

\begin{corollary} $\mathsf{K}: \mathsf{S}^\infty \Rightarrow \mathsf{S}^\infty$ is a natural isomorphism. 
\end{corollary}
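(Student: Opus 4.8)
The plan is to derive this directly from Proposition \ref{thm:kflat_nat_iso} by transporting the isomorphism $\mathsf{K}^\flat$ across the equivalence of categories of Example \ref{exa:rvec2}, exactly as was done for the deriving transformation and its axioms in Section \ref{sec:diff}. Since the entire analytic content---the explicit construction of a pointwise inverse via the Fundamental Theorem of Line Integration---has already been carried out at the level of the restrictions $C^\infty \Rightarrow C^\infty$, the corollary is a formal consequence, and I do not expect any substantial obstacle.

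Concretely, I would first observe that $\mathsf{K}$ is a natural transformation between the finitary endofunctors $S^\infty$ and $S^\infty$ on $\rVec$, and hence is a morphism in the category $\Fin(\rVec,\rVec)$; that $S^\infty$ is finitary is recorded in Remark \ref{rem:cinf_sinf}. Applying the restriction functor $\iota^*:\Fin(\rVec,\rVec) \rightarrow [\Lin_\RR,\rVec]$ along the inclusion $\iota:\Lin_\RR \hookrightarrow \rVec$, I would note that $\iota^*(\mathsf{K}) = \mathsf{K}^\flat$ by the very definition of $\mathsf{K}^\flat$ as the restriction of $\mathsf{K}$.

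The key step is then to invoke the fact that $\iota^*$ is an equivalence of categories (Proposition \ref{prop:equiv_fin}, Example \ref{exa:rvec2}). Because any equivalence of categories both preserves and reflects isomorphisms, and because $\iota^*(\mathsf{K}) = \mathsf{K}^\flat$ is an isomorphism by Proposition \ref{thm:kflat_nat_iso}, it follows immediately that $\mathsf{K}$ itself is an isomorphism in $\Fin(\rVec,\rVec)$, i.e., a natural isomorphism $S^\infty \Rightarrow S^\infty$. The only point requiring care---and it is one already settled---is that both the source and the target of $\mathsf{K}$ are finitary, so that $\mathsf{K}$ genuinely lies in $\Fin(\rVec,\rVec)$ and the equivalence applies; there is no genuine analytic difficulty remaining, as all of it was absorbed into the preceding proposition.
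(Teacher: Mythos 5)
Your proof is correct and is essentially the paper's own argument: the paper states at the start of Section \ref{sec:anti} that it suffices to show $\mathsf{K}^\flat$ is an isomorphism, since $\mathsf{K}^\flat$ is the image of $\mathsf{K}$ under the equivalence $\Fin(\rVec,\rVec) \simeq [\Lin_\RR,\rVec]$ of Example \ref{exa:rvec2}, and equivalences reflect isomorphisms. Your only addition is making explicit the check that both source and target of $\mathsf{K}$ are finitary, which the paper leaves implicit via Remark \ref{rem:cinf_sinf}.
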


Therefore by Theorem \ref{antithm}, we obtain the following result: 

\begin{theorem}
The monad $\SSS^\infty$ on $\rVec$ has the structure of a codifferential category with antiderivatives and therefore also the structure of a co-calculus category.
\end{theorem}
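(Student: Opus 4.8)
The plan is to assemble this result directly from the structural facts already established, since the substantive analytic work has been carried out in Proposition \ref{thm:kflat_nat_iso}. First I would recall that $\rVec$, equipped with the free $C^\infty$-ring monad $\SSS^\infty$ and the deriving transformation $\dt$ of Definition \ref{def:smooth_der_transf}, was shown to be a codifferential category in the theorem at the end of Section \ref{sec:diff}; this supplies the differential half of the structure and verifies axioms \textbf{[d.1]}--\textbf{[d.5]}.

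Next, I would invoke the definition of antiderivatives (Definition \ref{def:K}): a codifferential category has antiderivatives exactly when the natural transformation $\mathsf{K} = \mathsf{L} + S(0)$ is a natural isomorphism. This is precisely the content of the corollary immediately preceding the present statement, which lifts the invertibility of $\mathsf{K}^\flat$ (Proposition \ref{thm:kflat_nat_iso}) to the invertibility of $\mathsf{K}$ itself, via the equivalence $\Fin(\rVec,\rVec) \simeq [\Lin_\RR,\rVec]$ of Example \ref{exa:rvec2}. Thus $\SSS^\infty$ is a codifferential category with antiderivatives.

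Finally, I would apply Theorem \ref{antithm}, which guarantees that any codifferential category with antiderivatives is automatically a co-calculus category, with integral transformation $\iT := \dt^\circ \mathsf{K}^{-1}$. Since a co-calculus category is in particular a co-integral category, this simultaneously yields the integral-category structure that is the central goal of the paper. I anticipate no genuine obstacle at this stage: all of the difficulty was concentrated in establishing the invertibility of $\mathsf{K}^\flat$ by means of the Fundamental Theorem of Line Integration, and the present theorem is a formal consequence of that fact together with the cited general theory of \cite{integralAndCalcCats}.
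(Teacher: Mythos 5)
Your proposal is correct and follows essentially the same route as the paper: the paper likewise deduces the theorem by combining the codifferential structure from Section \ref{sec:diff}, the invertibility of $\mathsf{K}$ (obtained by lifting Proposition \ref{thm:kflat_nat_iso} through the equivalence of Example \ref{exa:rvec2}), and Theorem \ref{antithm}. Nothing is missing.
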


Before giving an explicit description of the induced integral transformation $\mathsf{s}$, we take a look at the inverse of $\mathsf{J}$. Using $\mathsf{J}^{-1}$ will simplify calculating $\mathsf{s}$. Recall that $\mathsf{K}$ being a natural isomorphism implies that $\mathsf{J}$ is a natural isomorphism.  One can then construct $\mathsf{J}^{-1}$ from $\mathsf{K}^{-1}$ with the aid of $\mu$ \cite{integralAndCalcCats}.  However, in the present case, for the finite-dimensional spaces $\RR^n$, we will see that $\mathsf{J}^{-1}_{\RR^n}$ can be described by a considerably simpler formula that our integral formula for $\mathsf{K}^{-1}_{\RR^n} = \mathsf{K}^*_{\RR^n}$ in Proposition \ref{thm:kflat_nat_iso}.  For this reason, and for the sake of completeness, we will give a stand-alone proof that $\mathsf{J}$ is invertible, by directly defining an inverse of $\mathsf{J}_{\RR^n}$ by means of an integral formula.

\begin{proposition}\label{thm:jflat_nat_iso} $\mathsf{J}^\flat: C^\infty \Rightarrow C^\infty$ is a natural isomorphism. 
\end{proposition}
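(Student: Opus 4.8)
The plan is to mirror the strategy of Proposition \ref{thm:kflat_nat_iso}. Since $\mathsf{J}^\flat$ is by construction the image of the natural transformation $\mathsf{J}:S^\infty \Rightarrow S^\infty$ (Definition \ref{def:K}) under the equivalence $\Fin(\rVec,\rVec) \simeq [\Lin_\RR,\rVec]$ of Example \ref{exa:rvec2}, its naturality is automatic, so it suffices to exhibit, for each $n$, a two-sided inverse of the single component $\mathsf{J}^\flat_{\RR^n}$ in $\rVec$; the inverses then assemble automatically into a natural transformation, making $\mathsf{J}^\flat$ a natural isomorphism. Recalling the formula $\mathsf{J}^\flat_{\RR^n}(f)(\vec v) = \nabla(f)(\vec v)\cdot \vec v + f(\vec v)$, I would define the candidate inverse $\mathsf{J}^*_{\RR^n}:\cInf(\RR^n) \to \cInf(\RR^n)$ by the \emph{single} integral
$$\mathsf{J}^*_{\RR^n}(g)(\vec v) := \int\limits_0^1 g(t\vec v)\;\mathsf{d}t\;,$$
which is again a smooth function of $\vec v$ by the Leibniz integral rule, and is evidently linear in $g$. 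This is markedly simpler than the double-integral formula for $\mathsf{K}^*_{\RR^n}$, and, crucially, carries no $1/t$ factor, so no limit manipulation of the integration bounds will be required.

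The identity $\mathsf{J}^*_{\RR^n}\mathsf{J}^\flat_{\RR^n} = 1$ rests on the observation that, for fixed $\vec v$,
$$\mathsf{J}^\flat_{\RR^n}(f)(t\vec v) = \nabla(f)(t\vec v)\cdot t\vec v + f(t\vec v) = \frac{\mathsf{d}}{\mathsf{d}t}\bigl(t\,f(t\vec v)\bigr)\;,$$
so that the Second Fundamental Theorem of Calculus gives $\int_0^1 \mathsf{J}^\flat_{\RR^n}(f)(t\vec v)\,\mathsf{d}t = \bigl[t\,f(t\vec v)\bigr]_0^1 = f(\vec v)$.

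For the reverse composite $\mathsf{J}^\flat_{\RR^n}\mathsf{J}^*_{\RR^n} = 1$, writing $h = \mathsf{J}^*_{\RR^n}(g)$, I would first differentiate under the integral sign (by the Leibniz integral rule and the chain rule, exactly as in \eqref{gradientLineCommutes}) to obtain $\nabla(h)(\vec v)\cdot \vec v = \int_0^1 \nabla(g)(t\vec v)\cdot t\vec v\,\mathsf{d}t$. Recognizing the integrand as $t\,\tfrac{\mathsf{d}}{\mathsf{d}t}g(t\vec v)$ and integrating by parts then yields $\nabla(h)(\vec v)\cdot \vec v = g(\vec v) - \int_0^1 g(t\vec v)\,\mathsf{d}t = g(\vec v) - h(\vec v)$, whence $\mathsf{J}^\flat_{\RR^n}(h)(\vec v) = \nabla(h)(\vec v)\cdot\vec v + h(\vec v) = g(\vec v)$, as required.

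The computations here are entirely routine; the only points requiring a little care are the smoothness of $\mathsf{J}^*_{\RR^n}(g)$ and the justification of differentiation under the integral sign, both of which follow from the Leibniz integral rule already invoked in the paper. I expect no genuine obstacle: in contrast to Proposition \ref{thm:kflat_nat_iso}, the absence of a singular $1/t$ factor means this stand-alone argument is strictly shorter and avoids the limiting arguments needed there.
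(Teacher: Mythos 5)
Your proposal is correct, and it uses exactly the same candidate inverse $\mathsf{J}^*_{\RR^n}(g)(\vec v) = \int_0^1 g(t\vec v)\,\mathsf{d}t$ and the same overall structure (smoothness via the Leibniz integral rule, differentiation under the integral sign via \eqref{gradientLineCommutes}, then verification of both composites) as the paper. The only genuine difference is the tool used to evaluate the resulting integrals: the paper introduces the auxiliary function $\tilde{f}(\vec x,t) = t f(\vec x)$ on $\RR^n \times \RR$ and applies the Fundamental Theorem of Line Integration along the path $t \mapsto (t\vec v, t)$ in $\RR^{n+1}$, whereas you stay in one variable, observing that $\mathsf{J}^\flat_{\RR^n}(f)(t\vec v) = \tfrac{\mathsf{d}}{\mathsf{d}t}\bigl(t\,f(t\vec v)\bigr)$ and invoking the one-variable Fundamental Theorem of Calculus for one composite and integration by parts for the other. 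These are essentially the same computation in two packagings --- the paper's line-integral identity for $\tilde{f}$ is just the integrated product rule, which is what integration by parts expresses --- but your version is arguably more elementary, since it avoids passing to $\RR^{n+1}$, while the paper's version has the expository advantage of reusing the Fundamental Theorem of Line Integration that drives Proposition \ref{thm:kflat_nat_iso} and that later reappears as the axiom \textbf{[c.1]}. Your reduction of the natural-isomorphism claim to componentwise invertibility (inverses of the components of a natural transformation automatically assemble into a natural transformation) is standard and correct, and matches what the paper leaves implicit.
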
 
\begin{proof} For each $n$, define $\mathsf{J}^*_{\mathbb{R}^n}: C^\infty(\RR^n) \to C^\infty(\RR^n)$ as follows: 
\[\mathsf{J}^*_{\mathbb{R}^n}(f)(\vec v)= \int \limits^1_0 f(t \vec v) ~\mathsf{d}t\;,\]
noting that the Leibniz integral rule entails that $\mathsf{J}^*_{\RR^n}(f)$ is indeed smooth.  Again, we wish to use the Fundamental Theorem of Line Integration to show that this is indeed the inverse of $\mathsf{J}^\flat_{\mathbb{R}^n} = \mathsf{J}_{\RR^n}$. 

Given a smooth function $f: \RR^n \to \RR$, define the smooth function $\tilde{f}: \RR^n \times \RR \to \RR$ simply as multiplying $f$ by a scalar: $\tilde{f}(\vec v, t) = t f(\vec v)$. Its gradient $\nabla (\tilde{f}): \RR^n \times \RR \to \RR^n \times \RR$ is given by
\begin{align*}
\nabla (\tilde{f}) (\vec v, t) &=~ \left( \frac{\partial \tilde{f}}{\partial x_1}(\vec v, t), \frac{\partial \tilde{f}}{\partial x_1}(\vec v, t), \hdots, \frac{\partial \tilde{f}}{\partial x_n}(\vec v, t), \frac{\partial \tilde{f}}{\partial t}(\vec v, t) \right) \\
&=~ \left( t \frac{\partial f}{\partial x_1}(\vec v) , \hdots, t \frac{\partial f}{\partial x_n}(\vec v) , f(\vec v) \right)\\
&=~ (t \nabla(f) (\vec v), f(\vec v))\;.
\end{align*}
As a consequence, we obtain the following identities: 
\[\nabla (\tilde{f}) (\vec v, t) \cdot (\vec w, 1) = t \nabla(f) (\vec v) \cdot \vec w + f(\vec v) = \nabla(f)(\vec v) \cdot t \vec w + f(\vec v) = \nabla (\tilde{f}) (\vec v, 1) \cdot (t \vec w, 1)\;. \]
Now using this above identity and the Fundamental Theorem of Line Integration, we show that $\mathsf{J}_{\mathbb{R}^n}\mathsf{J}^*_{\mathbb{R}^n}=1$: 
\begin{align*}
\mathsf{J}^*_{\mathbb{R}^n}\left (\mathsf{J}_{\mathbb{R}^n}(f) \right)(\vec v) &=  \int \limits^1_0 \mathsf{J}_{\mathbb{R}^n}^\flat(f)(t \vec v) ~ \dt t  \\
&=  \int \limits^1_0 \left( \nabla(f)(t \vec v) \cdot t \vec v + f(t\vec v) \right)  ~ \dt t \\
&= \int \limits^1_0 \left( \nabla (\tilde{f}) (t \vec v, t) \cdot (\vec v, 1) \right)  ~ \dt t \\
&= \tilde{f}(\vec v, 1) - \tilde{f}(\vec 0, 0) \\
&=f(\vec v) 
\end{align*}

Having thus shown that $\mathsf{J}^*_{\RR^n}$ is a retraction of $\mathsf{J}_{\RR^n}$, and having already noted above that $\mathsf{J}$ is invertible since $\mathsf{K}$ is invertible, we may at this point deduce that $\mathsf{J}^*_{\RR^n} = \mathsf{J}_{\RR^n}^{-1}$.  However, in order to construct a standalone proof that $\mathsf{J}$ is invertible, we now show directly that $\mathsf{J}^*_{\mathbb{R}^n}\mathsf{J}_{\mathbb{R}^n}=1$, by using the interchange identity between the gradient and the line integral \eqref{gradientLineCommutes}: 
\begin{align*}
\mathsf{J}_{\mathbb{R}^n}\left (\mathsf{J}^*_{\mathbb{R}^n}(f) \right)(\vec v) &=  \nabla\left( \mathsf{J}^*_{\mathbb{R}^n}(f) \right) (\vec v) \cdot  \vec v + \mathsf{J}^*_{\mathbb{R}^n}(f)(\vec v) \\
&= \nabla\left( \int \limits^1_0 f(t \vec x) ~\mathsf{d}t \right) (\vec v) \cdot \vec v + \int \limits^1_0 f(t \vec v) ~\mathsf{d}t \\
&= \int \limits^1_0 \nabla(f) (t \vec v) \cdot t \vec v ~\mathsf{d}t + \int \limits^1_0 f(t \vec v) ~\mathsf{d}t \\
&=  \int \limits^1_0 \left( \nabla(f)(t \vec v) \cdot t \vec v + f(t\vec v) \right)  ~ \dt t \\
&= \int \limits^1_0 \left( \nabla (\tilde{f}) (t \vec v, t) \cdot (\vec v, 1) \right)  ~ \dt t \\
&= \tilde{f}(\vec v, 1) - \tilde{f}(\vec 0, 0) \\
&=f(\vec v) 
\end{align*}
\end{proof} 

\begin{corollary} $\mathsf{J}: \mathsf{S}^\infty \Rightarrow \mathsf{S}^\infty$ is a natural isomorphism. 
\end{corollary}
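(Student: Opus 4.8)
The plan is to deduce this immediately from Proposition \ref{thm:jflat_nat_iso}, exactly as the corresponding corollary for $\mathsf{K}$ was deduced from Proposition \ref{thm:kflat_nat_iso}. The essential point is that $\mathsf{J}$ and its restriction $\mathsf{J}^\flat$ correspond to one another under the equivalence of categories established earlier in the paper, together with the fact that equivalences reflect isomorphisms.

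First I would observe that $\mathsf{J} = \mathsf{L} + 1 : S^\infty \Rightarrow S^\infty$ (Definition \ref{def:K}) is a natural transformation between finitary endofunctors on $\rVec$, since $S^\infty$ is finitary (Remark \ref{rem:cinf_sinf}). Hence $\mathsf{J}$ is a morphism in the category $\Fin(\rVec,\rVec)$. Under the equivalence $\iota^*:\Fin(\rVec,\rVec) \xrightarrow{\sim} [\Lin_\RR,\rVec]$ of Example \ref{exa:rvec2} and Proposition \ref{prop:equiv_fin}, given by restriction along $\iota:\Lin_\RR \hookrightarrow \rVec$, the image of $\mathsf{J}$ is precisely its restriction $\mathsf{J}^\flat:C^\infty \Rightarrow C^\infty$, by the notational convention fixed at the start of this section.

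Next I would invoke the fact that an equivalence of categories is conservative, i.e.\ reflects isomorphisms: if $\iota^*(\mathsf{J}) = \mathsf{J}^\flat$ is an isomorphism in $[\Lin_\RR,\rVec]$, then $\mathsf{J}$ is an isomorphism in $\Fin(\rVec,\rVec)$, and hence a natural isomorphism of endofunctors on $\rVec$. Since Proposition \ref{thm:jflat_nat_iso} establishes that $\mathsf{J}^\flat$ is a natural isomorphism, the desired conclusion follows at once.

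I expect no real obstacle here: this is a routine transport-of-structure argument across the equivalence $\Fin(\rVec,\rVec) \simeq [\Lin_\RR,\rVec]$, entirely parallel to the earlier corollary for $\mathsf{K}$. The only point requiring a moment's care is to confirm that $\mathsf{J}$ genuinely lands in $\Fin(\rVec,\rVec)$, which it does as a natural transformation between finitary functors, so that the equivalence applies and its conservativity may be used.
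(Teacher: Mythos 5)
Your proposal is correct and is essentially the paper's own argument: the paper deduces this corollary from Proposition \ref{thm:jflat_nat_iso} via exactly the transport along the equivalence $\Fin(\rVec,\rVec) \simeq [\Lin_\RR,\rVec]$ of Example \ref{exa:rvec2}, as announced at the start of Section \ref{sec:anti} for the analogous corollary about $\mathsf{K}$. Your added care in checking that $\mathsf{J}$ is a morphism of finitary functors, and the appeal to conservativity of an equivalence, simply make explicit what the paper leaves implicit.
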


We now compute the induced integral transformation $\mathsf{s}$ for the finite-dimensional vector spaces $\RR^n$, that is, we compute a formula for the map
$$\mathsf{s}_{\RR^n}:C^\infty(\mathbb{R}^n) \otimes \mathbb{R}^n \longrightarrow C^\infty(\mathbb{R}^n)\;,$$
which we recall is defined as $\mathsf{s}_{\mathbb{R}^n} =  \mathsf{d}^\circ_{\RR^n}{\mathsf{K}}_{\mathbb{R}^n}^{-1} = (\mathsf{J}^{-1}_{\mathbb{R}^n} \otimes 1_{\RR^n})\mathsf{d}^\circ_{\RR^n}$ (Theorem \ref{antithm}).  Given any element $\omega = \sum_{i=1}^n f_i \otimes e_i$ of $C^\infty(\RR^n) \otimes \RR^n$, expressed as in \ref{rem:1forms}, we compute that
\begin{align*}
\mathsf{s}_{\mathbb{R}^n}(\omega)(\vec v) &= { \mathsf{d}^\circ} \left ( \left(\mathsf{J}^{-1}_{\mathbb{R}^n} \otimes 1_{\RR^n} \right) \left(\sum_{i=1}^n f_i \otimes e_i \right) \right)(\vec v) \\
&=  {\mathsf{d}^\circ} \left(\sum_{i=1}^n \left(\int \limits^1_0 f_i(t \vec x) ~\mathsf{d}t \right) \otimes e_i \right)(\vec v) \\
&= \left( \int \limits^1_0 f_1(t \vec v) ~\mathsf{d}t, \hdots, \int \limits^1_0 f_n(t \vec v) ~\mathsf{d}t \right) \cdot \vec v \\
&= \int \limits^1_0 (f_1 (t \vec v), \hdots, f_n (t \vec v)) \cdot \vec v ~\mathsf{d}t \\
&=  \int \limits^1_0 F(t \vec v) \cdot \vec v ~\mathsf{d}t
\end{align*}
where $F: \RR^n \to \RR^n$ is the vector field $F = \langle f_1, \hdots, f_n\rangle$ corresponding to $\omega$ as in \ref{rem:1forms}. Equivalently, $\mathsf{s}_{\RR}(\omega)(\vec v)$ can be described as the line integral
$$\mathsf{s}_{\mathbb{R}^n}(\omega)(\vec v) = \int\limits_{C_{\vec v}} F \cdot \mathsf{d}r$$
of the vector field $F$ along the directed line segment $C_{\vec v}$ from the origin to the point $\vec v$ (for which one parametrization is $r = r_{\vec v}$, as discussed in the proof of Proposition \ref{thm:kflat_nat_iso}).  Recalling that $\omega$ is a 1-form on $\RR^n$ (\ref{rem:1forms}), this line integral is more succinctly described as follows:

\begin{theorem}\label{thm:1formint}
The free $C^\infty$-ring modality $\mathsf{S}^\infty$ carries the structure of an integral category.  The integral transformation $\mathsf{s}$ carried by $\mathsf{S}^\infty$ sends each 1-form $\omega \in C^\infty(\RR^n) \otimes \RR^n$ to the function $\mathsf{s}_{\RR^n}(\omega) \in C^\infty(\RR^n)$ whose value at each $\vec v \in \RR^n$ is the integral of $\omega$ along the directed line segment $C_{\vec v}$ from $\vec 0$ to $\vec v$:
$$\mathsf{s}_{\mathbb{R}^n}(\omega)(\vec v) = \int\limits_{C_{\vec v}} \omega\;.$$
\end{theorem}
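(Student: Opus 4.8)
The plan is to assemble the theorem from results already established rather than to prove anything substantially new, since the analytic heart of the matter — the invertibility of $\mathsf{K}$ (equivalently $\mathsf{J}$) and the computation of the induced integral transformation — has already been carried out in Propositions \ref{thm:kflat_nat_iso} and \ref{thm:jflat_nat_iso} and in the calculation immediately preceding the statement. First I would dispose of the existence claim: since $\mathsf{K}$ is a natural isomorphism, $\mathsf{S}^\infty$ is a codifferential category with antiderivatives, so by Theorem \ref{antithm} it is a co-calculus category, which by Definition \ref{cocaldef} is in particular a co-integral (integral) category. Crucially, this route means the integral axioms \textbf{[s.1]}--\textbf{[s.3]} come for free from Theorem \ref{antithm} and need not be verified by hand; the integral transformation is $\mathsf{s} = \mathsf{d}^\circ\mathsf{K}^{-1} = (\mathsf{J}^{-1}\otimes 1)\mathsf{d}^\circ$.

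Next, for the explicit formula on the finite-dimensional spaces $\RR^n$, I would invoke the computation above, which — using $\mathsf{J}^{-1}_{\RR^n} = \mathsf{J}^*_{\RR^n}$ from Proposition \ref{thm:jflat_nat_iso} together with the description of $\mathsf{d}^\circ_{\RR^n}$ — gives, for $\omega = \sum_{i=1}^n f_i \otimes e_i$ with corresponding vector field $F = \langle f_1,\ldots,f_n\rangle$ as in \ref{rem:1forms},
$$\mathsf{s}_{\RR^n}(\omega)(\vec v) = \int_0^1 F(t\vec v)\cdot \vec v~\mathsf{d}t = \int_{C_{\vec v}} F \cdot \mathsf{d}r\;.$$
The remaining step is purely notational: to recast the line integral of the vector field $F$ as the line integral of the $1$-form $\omega$ itself. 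For this I would appeal to the standard definitional identity of multivariable calculus that for $\omega = \sum_i f_i\,\mathsf{d}x_i$ and its associated vector field $F = \langle f_1,\ldots,f_n\rangle$ one has $\int_C \omega = \int_C F \cdot \mathsf{d}r$ along any curve $C$, since both sides unwind through a parametrization $r$ to the same scalar integral $\int F(r(t))\cdot r'(t)~\mathsf{d}t$. Applying this with $C = C_{\vec v}$ yields $\mathsf{s}_{\RR^n}(\omega)(\vec v) = \int_{C_{\vec v}} \omega$, as claimed.

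I do not anticipate any genuine obstacle. The only points requiring care are the consistent use of the $1$-form/vector-field dictionary of \ref{rem:1forms} and the observation that the formula on the objects $\RR^n$ determines $\mathsf{s}$ on all of $\rVec$ (since $\mathsf{s}$ is a natural transformation between finitary functors, hence the image of its restriction under the equivalence of Example \ref{exa:rvec2}), so that pinning down the components $\mathsf{s}_{\RR^n}$ suffices. All of the difficulty has already been absorbed into the earlier invertibility arguments, which rest on the Fundamental Theorem of Line Integration; what is left here is bookkeeping.
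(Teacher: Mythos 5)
Your proposal is correct and follows essentially the same route as the paper: existence of the integral structure via the invertibility of $\mathsf{K}$ and Theorem \ref{antithm}, the explicit formula on $\RR^n$ via $\mathsf{s}_{\RR^n} = (\mathsf{J}^{-1}_{\RR^n}\otimes 1)\mathsf{d}^\circ_{\RR^n}$ using the inverse $\mathsf{J}^*_{\RR^n}$ from Proposition \ref{thm:jflat_nat_iso}, and then the standard dictionary identifying $\int_{C_{\vec v}} F\cdot\mathsf{d}r$ with $\int_{C_{\vec v}}\omega$. Your closing observation that naturality over the finitary restriction to $\Lin_\RR$ pins down $\mathsf{s}$ on all of $\rVec$ is likewise the paper's (implicit) justification.
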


\begin{remark}\label{rem:1formint}
For brevity, we will write the integral $\int_{C_{\vec v}}\omega$ in Theorem \ref{thm:1formint} as $\int_{\vec v}\omega$, as it can be thought of as an integral \textit{over $\vec v$, considered as a position vector}.  Correspondingly, we will denote the function $\mathsf{s}_{\RR^n}(\omega):\RR^n \rightarrow \RR$ by $\int_{(\text{-})}\omega$.
\end{remark}

\begin{example}\label{rmk:integralOfPolys}
It is illustrative to consider what the above formulae produce when the input is a 1-form $\omega$ with polynomial coefficients.  For example, writing $\vec x = (x_1,x_2)$ for a general point in $\RR^2$, let $\omega$ be the 1-form $\omega = x_1^2x_2^5\,\mathsf{d}x_1 + x_1^3\,\mathsf{d}x_2$ on $\RR^2$ (with the notation of \ref{rem:1forms}), whose corresponding vector field $F$ is given by $F(x_1,x_2) = (x_1^2x_2^5,x_1^3)$.  Then $F(tx_1,tx_2) = ((tx_1)^2(tx_2)^5,(tx_1)^3) = (t^7x_1^2x_2^5,t^3x_1^3)$ so that $\mathsf{s}_{\RR^n}(\omega)(\vec x)$ is the integral
\[\int\limits_{\vec x}\omega = \int \limits^1_0 F(t \vec x) \cdot \vec x ~\mathsf{d}t = \int_0^1(t^7x_1^2x_2^5x_1 + t^3x_1^3x_2) ~\mathsf{d}t = \frac{1}{8}x_1^3x_2^5 + \frac{1}{4}x_1^3x_2. \]
More generally, one can readily show that when applied to any 1-form
\[\omega = \sum_{i=1}^n p_i\,\mathsf{d}x_i = \sum_{i=1}^n p_i\otimes e_i\]
 on $\RR^n$ with polynomial coefficients $p_i$, the above formulae for $\mathsf{s}$ reproduce the integral transformation for polynomials as described in Example \ref{ex:rVecInt}.  The formula for arbitrary smooth functions thus explains the seemingly odd choice of summing all the coefficients when integrating a particular term.  
\end{example}

Let us now examine what the identities of a co-calculus category (Definition \ref{cocaldef}) amount to in the specific co-calculus category that we have developed here. The Second Fundamental Theorem of Calculus rule {\bf [c.1]} is precisely the special case of the Fundamental Theorem of Line Integration that we used extensively in the proofs of Propositions \ref{thm:kflat_nat_iso} and \ref{thm:jflat_nat_iso}, namely \eqref{FTL}. Indeed, given a smooth function $f \in C^\infty(\RR^n)$, one has that 
 \begin{align*}
\mathsf{s}_{\mathbb{R}^n}\left( \mathsf{d}_{\mathbb{R}^n}(f) \right)(\vec v) + S^\infty(0)(f)(\vec v) = \mathsf{s}_{\mathbb{R}^n}\left(  \sum_{i=1}^n \frac{\partial f}{\partial x_i}\otimes e_i \right)(\vec v) + f(\vec 0) =  \int \limits^1_0 \nabla(f) (t \vec v) \cdot \vec v ~\mathsf{d}t + f(\vec 0) = f(\vec v)\:.
\end{align*}
On the other hand the Poincar\'{e} condition {\bf [c.2]} is essentially the statement of its namesake, the Poincar\'{e} Lemma, for 1-forms on Euclidean spaces. Explicitly, {\bf [c.2]} says that closed 1-forms are exact and that the integral transformation $\mathsf{s}$ provides a canonical choice of 0-form to serve as `antiderivative' for each closed 1-form. So if $\omega$ is a closed 1-form over $\mathbb{R}^n$, then $\omega$ is exact by being the exterior derivative of the 0-form $\mathsf{s}_{\mathbb{R}^n}(\omega)$, that is, $\mathsf{d}_{\mathbb{R}^n}(\mathsf{s}_{\mathbb{R}^n}(\omega)) = \omega$. 

We now take a look at the Rota-Baxter rule \textbf{[s.2]} for the integral transformation $\mathsf{s}$ (Definition \ref{defn:coint}). Continuing to identify $C^\infty(\RR^n) \otimes \RR^n$ with the $C^\infty(\RR^n)$-module of smooth 1-forms on $\RR^n$ as in Remark \ref{rem:1forms}, we will employ the usual notation $f\omega$ for the product of a function $f \in C^\infty(\RR^n)$ and a 1-form $\omega$.  Given two 1-forms $\omega,\nu \in C^\infty(\RR^n) \otimes \RR^n$, the Rota-Baxter rule \textbf{[s.2]} gives the following equality, with the notation of Remark \ref{rem:1formint}: 
\[\left(\int\limits_{\vec v}\omega\right)\left(\int\limits_{\vec v}\nu\right)  \;\;\;\;=\;\;\;\;  \int\limits_{\vec v}\left(\int\limits_{(\text{-})}\nu\right)\omega \;\;+\;\; \int\limits_{\vec v}\left(\int\limits_{(\text{-})}\omega\right)\nu\]

The Rota-Baxter identity also admits a nice (and possibly more explicit) expression in terms of vector fields. Indeed, given two vector fields $F: \RR^n \to \RR^n$ and $G: \RR^n \to \RR^n$, then the Rota-Baxter rule \textbf{[s.2]} implies that the following equality holds: 

\begin{align*}
 &\left( \int \limits^1_0 F(t \vec v) \cdot \vec v ~\mathsf{d}t  \right) \left( \int \limits^1_0 G(t\vec v) \cdot \vec v ~\mathsf{d}t  \right) = \\
 &\int \limits^1_0 \left( F(t \vec v) \cdot  \vec v \right) \left( \int \limits^t_0 G(u \vec v) \cdot \vec v ~\mathsf{d}u \right) ~\mathsf{d}t  +  \int \limits^1_0 \left( \int \limits^t_0 F (u \vec v)\cdot \vec v ~\mathsf{d}u  \right) \left( G(t \vec v) \cdot   \vec v \right)~\mathsf{d}t 
\end{align*}

A further consequence of the Rota-Baxter rule, for arbitrary vector spaces $V$, is that the integral transformation $\mathsf{s}_V: S^\infty(V) \otimes V \to S^\infty(V)$ induces a \textit{Rota-Baxter operator} on the free $C^\infty$-ring $S^\infty(V)$, as we will show in Proposition \ref{prop:rotaBaxterAlgebras}. 

\begin{definition}  Let $R$ be a commutative ring. A (commutative) \textbf{Rota-Baxter algebra} \cite{guo2012introduction} (of weight $0$) over $R$ is a pair $(A, \mathsf{P})$ consisting of a (commutative) $R$-algebra $A$ and an $R$-linear map $\mathsf{P}: A \to A$ such that $\mathsf{P}$ satisfies the Rota-Baxter identity; that is, for each $a,b \in A$, the following equality holds:
\begin{equation}\label{eq:rb_alg}\mathsf{P}(a)\mathsf{P}(b)=\mathsf{P}(a\mathsf{P}(b))+\mathsf{P}(\mathsf{P}(a)b). \end{equation}
 The map $\mathsf{P}$ is called a \textbf{Rota-Baxter operator}.
\end{definition}

As discussed in \cite{integralAndCalcCats}, the latter Rota-Baxter identity \eqref{eq:rb_alg} corresponds to a formulation of the integration by parts rule that involves only integrals and no derivatives---as we will soon illustrate in Example \ref{ex:smoothRB}. We refer the reader to \cite{guo2012introduction} for more details on Rota-Baxter algebras. 

Now for an arbitrary $\mathbb{R}$-vector space $V$ and any element $\vec v \in V$, it readily follows from the Rota-Baxter rule \textbf{[s.2]} in Definition \ref{defn:coint} that the corresponding linear map $v: \mathbb{R} \to V$ induces a Rota-Baxter operator $\mathsf{P}_v: S^\infty(V) \to S^\infty(V)$ defined as the following composite
\begin{equation}\label{RB}\begin{gathered}  \mathsf{P}_v := \xymatrixcolsep{5pc}\xymatrix{ S^\infty(V) \ar[r]^-{1 \otimes v} &  S^\infty(V) \otimes V \ar[r]^-{\mathsf{s}_V} & S^\infty(V)\;,
  } \end{gathered}\end{equation}
making the pair $(S^\infty(V), \mathsf{P}_v)$ a Rota-Baxter algebra over $\mathbb{R}$. Summarizing, we obtain the following new observation: 

\begin{proposition}\label{prop:rotaBaxterAlgebras}
Free $C^\infty$-rings are commutative Rota-Baxter algebras over $\mathbb{R}$, with Rota-Baxter operators defined as in (\ref{RB}).
\end{proposition}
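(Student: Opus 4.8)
The plan is to verify, for each real vector space $V$ and each $\vec v \in V$ (with associated linear map $v:\RR \to V$), the three conditions in the definition of a commutative Rota-Baxter algebra of weight $0$ for the pair $(S^\infty(V),\mathsf{P}_v)$. Two of these are immediate. The underlying commutative $\RR$-algebra is $(S^\infty(V),\mathsf{m}_V,\mathsf{u}_V)$, which is commutative because it is the $\RR$-algebra underlying the free $C^\infty$-ring on $V$ (Remark \ref{rem:sinfty_alg_mod_mnd_mor}); and $\mathsf{P}_v = (1\otimes v)\mathsf{s}_V$ is $\RR$-linear, being a composite of morphisms in $\rVec$. Thus the only substantive point is the Rota-Baxter identity \eqref{eq:rb_alg}, and the content of the proof is to obtain it as a direct consequence of the Rota-Baxter rule \textbf{[s.2]} of Definition \ref{defn:coint}.

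To this end I would precompose the morphism equation \textbf{[s.2]} (whose common domain is $(S^\infty(V)\otimes V)\otimes(S^\infty(V)\otimes V)$) with the map
$$(1\otimes v)\otimes(1\otimes v)\;:\;S^\infty(V)\otimes S^\infty(V)\longrightarrow (S^\infty(V)\otimes V)\otimes(S^\infty(V)\otimes V)\;.$$
Since $(1\otimes v)\mathsf{s}_V = \mathsf{P}_v$ by the definition \eqref{RB}, the left-hand side $(\mathsf{s}\otimes\mathsf{s})\mathsf{m}$ of \textbf{[s.2]} becomes $(\mathsf{P}_v\otimes\mathsf{P}_v)\mathsf{m}_V$, which sends $a\otimes b$ to the product $\mathsf{P}_v(a)\,\mathsf{P}_v(b)$. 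I would then track the two summands on the right-hand side through the same precomposition. For the first summand $(\mathsf{s}\otimes 1\otimes 1)(\mathsf{m}\otimes 1)\mathsf{s}$, integrating the first factor yields $\mathsf{P}_v(a)$, multiplying by the function factor of the second via $\mathsf{m}_V$ produces the $1$-form $(\mathsf{P}_v(a)\,b)\otimes v$, and a final application of $\mathsf{s}_V$ returns $\mathsf{P}_v(\mathsf{P}_v(a)\,b)$. For the second summand $(1\otimes 1\otimes\mathsf{s})(1\otimes\sigma)(\mathsf{m}\otimes 1)\mathsf{s}$, integrating the second factor gives $\mathsf{P}_v(b)$, and after the symmetry $\sigma$ and the multiplication one obtains the $1$-form $(a\,\mathsf{P}_v(b))\otimes v$, whose integral is $\mathsf{P}_v(a\,\mathsf{P}_v(b))$. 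Summing yields exactly $\mathsf{P}_v(\mathsf{P}_v(a)\,b)+\mathsf{P}_v(a\,\mathsf{P}_v(b))$, which is the Rota-Baxter identity \eqref{eq:rb_alg}.

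The step I expect to be the main obstacle is the bookkeeping of tensor factors and the suppressed associators when pushing $(1\otimes v)\otimes(1\otimes v)$ through each composite, in particular the recognition that the occurrences of $\mathsf{m}\otimes 1$ in \textbf{[s.2]} realize precisely the $S^\infty(V)$-module action on $1$-forms (multiplication of a function by a $1$-form), so that, for instance, $\mathsf{m}_V\otimes 1$ sends $\mathsf{P}_v(a)\otimes b\otimes v$ to $(\mathsf{P}_v(a)\,b)\otimes v$. As a consistency check one could instead verify the identity elementwise using the explicit line-integral formula $\mathsf{s}_{\RR^n}(\omega)(\vec v)=\int_0^1 F(t\vec v)\cdot\vec v\,\mathsf{d}t$ of Theorem \ref{thm:1formint} on the finite-dimensional spaces and then invoke the equivalence of Example \ref{exa:rvec2} as in Section \ref{sec:diff}; but the diagrammatic derivation from \textbf{[s.2]} is cleaner and applies uniformly to all $V$. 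Finally, commutativity of $S^\infty(V)$ reconciles the two forms $\mathsf{P}_v(\mathsf{P}_v(a)\,b)$ and $\mathsf{P}_v(b\,\mathsf{P}_v(a))$ with the stated identity, completing the verification.
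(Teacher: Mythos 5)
Your proof is correct and takes essentially the same approach as the paper: the paper's own argument consists of asserting that the Rota-Baxter identity for $\mathsf{P}_v = (1 \otimes v)\mathsf{s}_V$ ``readily follows'' from the Rota-Baxter rule \textbf{[s.2]}, and your precomposition of \textbf{[s.2]} with $(1\otimes v)\otimes(1\otimes v)$ is exactly the verification the paper leaves implicit. The remaining points (commutativity of $S^\infty(V)$ and $\RR$-linearity of $\mathsf{P}_v$) are immediate in both treatments.
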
 

\begin{example}\label{ex:smoothRB}
A particularly important example arises when we let $V = \mathbb{R}$ and we take $\vec v$ to be the element $1 \in \mathbb{R}$ (whose corresponding linear map is the identity on $\mathbb{R}$). In this case, the corresponding Rota-Baxter operator $ \mathsf{P}_1$ on $S^\infty(\mathbb{R}) = C^\infty(\mathbb{R})$ is essentially the integral transformation: 
$$  \mathsf{P}_1 := \xymatrixcolsep{5pc}\xymatrix{ C^\infty(\mathbb{R}) \ar[r]^-{\cong} &  C^\infty(\mathbb{R}) \otimes \mathbb{R} \ar[r]^-{\mathsf{s}_V} & C^\infty(\mathbb{R})\;.
  } $$
Letting $f \in C^\infty(\mathbb{R})$, we can use the substitution rule to compute that the function $\mathsf{P}_1(f) \in C^\infty(\mathbb{R})$ is given by 
$$ \mathsf{P}_1(f)(x) = \mathsf{s}_{\RR}(f \otimes 1)(x) =  \int \limits^1_0 f(t x) x ~\mathsf{d}t = \int \limits^x_0 f(u) ~\mathsf{d}u\;.$$  
Expressed in this form, the Rota-Baxter algebra $(C^\infty(\mathbb{R}), \mathsf{P}_1)$ is often considered the canonical example of a Rota-Baxter algebra (of weight $0$). For a  pair of smooth functions $f,g \in C^\infty(\mathbb{R})$, the Rota-Baxter identity is 
\begin{align*}
 \mathsf{P}_1(f)(x) \mathsf{P}_1(g)(x) &= \left( \int \limits^x_0 f(u) ~\mathsf{d}u \right) \left( \int \limits^x_0 g(u) ~\mathsf{d}u \right) \\
 &= \int \limits^x_0 f(u) \left( \int \limits^u_0 g(t) ~\mathsf{d}t \right) ~\mathsf{d}u + \int \limits^x_0  \left( \int \limits^u_0 f(t) ~\mathsf{d}t \right)g(u) ~\mathsf{d}u \\
 &= \mathsf{P}_1\left(f \mathsf{P}_1(g) \right)(x) + \mathsf{P}_1\left( \mathsf{P}_1(f) g \right)\;.
\end{align*}
\end{example}

One interesting consequence of Rota-Baxter algebra structure is that the Rota-Baxter operator induces a new \emph{non-unital} Rota-Baxter algebra structure. If $(A, \mathsf{P})$ is a Rota-Baxter algebra over $R$, then define a new associative binary operation $\ast_\mathsf{P}$ by 
$$ a \ast_\mathsf{P} b =  a\mathsf{P}(b) +  \mathsf{P}(a)b
\;.$$
This new multiplication $\ast_\mathsf{P}$ is called the double product and endows $A$ with a non-unital $R$-algebra structure, with respect to which $\mathsf{P}$ is again a Rota-Baxter operator. If $A$ is commutative, then the double product is also commutative. Also note that by $R$-linearity of $\mathsf{P}$, the Rota-Baxter identity can then be re-expressed as:
$$\mathsf{P}(a \ast_\mathsf{P} b) = \mathsf{P}(a)\mathsf{P}(b)$$
which implies that $\mathsf{P}$ is a non-unital Rota-Baxter algebra homomorphism. 

\begin{corollary} In addition to its underlying unital $\RR$-algebra structure, each free $C^\infty$-ring carries a further non-unital, commutative $\RR$-algebra structure, with the same addition operation but with multiplication given by the double product induced by the Rota-Baxter operator defined in \eqref{RB}.
\end{corollary}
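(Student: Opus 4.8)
The plan is to apply the general double-product construction for Rota-Baxter algebras, recorded in the paragraph immediately preceding this corollary, to the specific Rota-Baxter algebra structure on free $C^\infty$-rings that we have already established. First I would invoke Proposition \ref{prop:rotaBaxterAlgebras}, which tells us that for each $\RR$-vector space $V$ and each chosen $\vec v \in V$, the free $C^\infty$-ring $S^\infty(V)$ is a commutative Rota-Baxter algebra over $\RR$ with Rota-Baxter operator $\mathsf{P}_v$ given by \eqref{RB}. This exhibits $S^\infty(V)$ as a commutative $\RR$-algebra equipped with a Rota-Baxter operator, so the hypotheses of the general construction are satisfied.

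Next I would form the double product $a \ast_{\mathsf{P}_v} b = a\mathsf{P}_v(b) + \mathsf{P}_v(a)b$ and observe, as in the preceding discussion, that this operation is $\RR$-bilinear (since $\mathsf{P}_v$ is $\RR$-linear and the original multiplication on $S^\infty(V)$ is $\RR$-bilinear), associative, and commutative, the last because $S^\infty(V)$ is commutative. It therefore endows $S^\infty(V)$ with the same underlying additive structure but a new non-unital, commutative $\RR$-algebra structure. The one genuinely algebraic point is the associativity of $\ast_{\mathsf{P}_v}$, which follows from a short computation: expanding $a \ast_{\mathsf{P}_v}(b \ast_{\mathsf{P}_v} c)$ and $(a \ast_{\mathsf{P}_v} b)\ast_{\mathsf{P}_v} c$ and using the Rota-Baxter identity \eqref{eq:rb_alg} to rewrite the term $\mathsf{P}_v(a)\mathsf{P}_v(b)$, both sides reduce to $a\mathsf{P}_v(b)\mathsf{P}_v(c) + \mathsf{P}_v(a)b\mathsf{P}_v(c) + \mathsf{P}_v(a)\mathsf{P}_v(b)c$.

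Since this construction is already recorded as a general fact about Rota-Baxter algebras just before the statement, there is no real obstacle here: the corollary is essentially an immediate specialization of that fact via Proposition \ref{prop:rotaBaxterAlgebras}. The only thing requiring a line of care is confirming that $\ast_{\mathsf{P}_v}$ is well-defined and $\RR$-bilinear, and I would simply note that this is inherited from the $\RR$-linearity of $\mathsf{P}_v$ and the $\RR$-bilinearity of the existing product on $S^\infty(V)$.
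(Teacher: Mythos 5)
Your proposal is correct and follows essentially the same route as the paper, which states this corollary without a separate proof precisely because it is an immediate specialization of the general double-product construction (recorded in the preceding paragraph, from the theory of Rota-Baxter algebras) to the Rota-Baxter operators $\mathsf{P}_v$ on $S^\infty(V)$ provided by Proposition \ref{prop:rotaBaxterAlgebras}. Your explicit associativity computation, reducing both bracketings to $a\mathsf{P}_v(b)\mathsf{P}_v(c) + \mathsf{P}_v(a)b\mathsf{P}_v(c) + \mathsf{P}_v(a)\mathsf{P}_v(b)c$ via the Rota-Baxter identity \eqref{eq:rb_alg}, is correct and merely fills in a detail the paper delegates to the cited literature \cite{guo2012introduction}.
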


\begin{example} \normalfont Consider the Rota-Baxter algebra $(C^\infty(\mathbb{R}), \mathsf{P}_1)$ from Example \ref{ex:smoothRB}. In this case, the induced double product $\ast_{\mathsf{P}_1}$ is given by 
\[(f \ast_{\mathsf{P}_1} g)(x) =  f(x) \left( \int \limits^x_0 g(t) ~\mathsf{d}t \right) +  \left( \int \limits^x_0 f(t) ~\mathsf{d}t \right) g(x) =  f(x)\mathsf{P}_1(g)(x) +  \mathsf{P}_1(f)(x)g(x)\;.\]
\end{example}

\bibliographystyle{amsplain}
\bibliography{smoothTensorIntegral}

\end{document}